\theoremstyle{plain}
\newtheorem{thm}{Theorem}[section]
\newtheorem{propo}[thm]{Proposition}
\newtheorem{lem}[thm]{Lemma}
\newtheorem{cor}[thm]{Corollary}
\theoremstyle{definition}
\newtheorem{defi}[thm]{Definition}
\newtheorem{rem}[thm]{Remark}
\renewcommand{\Re}{{\rm Re}}
\newcommand{\R}{\mathbb{R}}
\newcommand{\C}{\mathbb{C}}
\newcommand{\Z}{\mathbb{Z}}
\newcommand{\N}{\mathbb{N}}
\newcommand{\T}{\mathbb{T}}
\newcommand{\lt}{{\mathcal L}}
\newcommand{\U}{{\mathcal U}}
\newcommand{\Oo}{{\mathcal O}}
\newcommand{\PSH}{\mathrm {PSH}}
\newcommand{\RS}{\hat{\mathbb{C}}}
\newcommand{\abs}[1]{\left | {#1}\right | }
\newcommand{\norm}[2]{\left\| {#1}\right\| _{#2}}
\newcommand{\all}[2]{ \{\, {#1} \, : \, {#2} \, \} }
\newcommand{\cl}[1]{{\rm cl} ( #1 )} % closure of a set
\newcommand{\HR}{H^2(D^\infty_R)}%{\hat{\mathbb{C}}}}
\newcommand{\HRnull}{H^2_0(D^\infty_R)}
\newcommand{\monster}{H^2(D_r)\oplus H^2_0(D^\infty_R)}
\newcommand{\Tr}{{\rm Tr}}
\begin{document}\bibliographystyle{plain}
\title[Ruelle spectrum of analytic expanding circle maps]{Lower bounds for the Ruelle spectrum of analytic expanding circle maps}

\author[O.F.~Bandtlow]{Oscar F.~Bandtlow}
\address{%
Oscar F.~Bandtlow\\
School of Mathematical Sciences\\
Queen Mary University of London\\
London E3 4NS\\
UK.
}
\email{o.bandtlow@qmul.ac.uk}

\author[F.~Naud]{Fr\'ed\'eric Naud}
\address{%
Fr\'ed\'eric Naud\\
Laboratoire de Math\'ematiques d'Avignon\\
Universit\'e d'Avignon, Campus Jean-Henri Fabre, 301 rue de Baruch de Spinoza,\\
84916 Avignon cedex 9,\\
France.
}
\email{frederic.naud@univ-avignon.fr}

\subjclass[2000]{37C30, 37D20}

\keywords{Ruelle eigenvalues, Analytic expanding maps, 
Transfer operators, Dynamical zeta functions}

\begin{abstract}
 We prove that there exists a dense set of analytic expanding maps of
 the circle for which the Ruelle eigenvalues enjoy exponential lower bounds. 
The proof combines potential theoretic techniques and explicit calculations for the spectrum of expanding Blaschke products. 
\end{abstract}
 
\maketitle

\begin{section}{Introduction and statement}
One of the basic problems of smooth ergodic theory is to investigate
the asymptotic behaviour of mixing systems. In particular, there is
interest in precise quantitative
results on the rate of decay of correlations for smooth observables. 
In his seminal paper \cite{Ruelle4}, Ruelle showed that the long time
asymptotic behaviour 
of {\it analytic} hyperbolic systems can be understood in terms of
the  {\it Ruelle spectrum}, that is,  
the spectrum of certain operators, known as transfer operators in this
context,  
% (the {\it Ruelle spectrum}),
acting on suitable Banach spaces of holomorphic functions. 
Surprisingly, even for the simplest
systems like analytic expanding maps of the circle, very few
quantitative results on the Ruelle spectrum are known so far. 

Let us be more precise. With $\T$ denoting the unit circle in the
complex 
plane $\C$, a map $\tau: \T \rightarrow \T$ is said to be 
{\it analytic expanding} 
if $\tau$ has a holomorphic extension to a neighbourhood
of $\T$ and we have 
$$ \inf_{z\in \T}\abs{\tau'(z)}>1\,.$$ 
The Ruelle spectrum is commonly defined as the spectrum of the 
Perron-Frobenius or transfer operator (on a suitable space of 
holomorphic functions) given locally by 
\begin{equation}
\label{eq:FPOdef}
(\lt_\tau f)(z):=\omega(\tau)\sum_{k=1}^d \phi_k'(z) f(\phi_k(z))\,,
\end{equation}
where $\phi_k$ denotes the $k$-th local inverse branch 
of the covering map $\tau:\T \rightarrow \T$, and 
\begin{equation}
\label{eq:omegadef}
\omega(\tau)= 
\begin{cases}
  +1 & \text{if $\tau$ is orientation preserving;} \\
  -1 & \text{if $\tau$ is orientation reversing.} \\
\end{cases}
\end{equation}
As first demonstrated by 
Ruelle in \cite{Ruelle4}, this transfer operator
has a discrete spectrum of eigenvalues which has an 
intrinsic dynamical meaning and does not depend on the choice of
function space. 
Let $(\lambda_n(\lt_\tau))_{n \in \N}$
denote this eigenvalue sequence, counting algebraic multiplicities and
ordered by decreasing 
modulus, so that 
$$
1=\lambda_1(\lt_\tau)>\vert \lambda_2(\lt_\tau)\vert \geq \vert \lambda_3(\lt_\tau) \vert
\geq \cdots \geq \vert \lambda_n(\lt_\tau) \vert\geq \cdots \geq 0\,.
$$ 

To state the main result, given $0<r<1<R$, we denote by $A_{r,R}$ the
complex annulus defined by
$$A_{r,R}=\all{z \in \C}{r<\vert z\vert <R}\supset \T\,.$$

In this paper we prove the following facts.
\begin{thm}
Let $\tau$ be an analytic expanding map of the circle as defined
above. Then the following holds.  
\begin{enumerate}
\item There exist  constants $c_1,c_2>0$ such that for all $n\in \N$ we have 
$$ \vert \lambda_n(\lt_\tau) \vert \leq c_1 \exp(-c_2n)\,.$$
\item Assume that $\tau$ 
is holomorphic on $A_{r,R}$ for some $0<r<1<R$. Then there exist $r<r_1<1<R_1<R$ such that for all $\eta>0$, one can find
an expanding circle map $\tau_\eta$ 
holomorphic on $A_{r_1,R_1}$ with 
$$\sup_{z\in A_{r_1,R_1}}\vert \tau(z)-\tau_\eta(z)\vert \leq \eta, $$
and such that for all $\epsilon>0$, we have 
$$\limsup_{n\rightarrow \infty} \vert \lambda_n(\lt_{\tau_{\eta}})
\vert 
\exp(n^{1+\epsilon}) >0\,.$$
\end{enumerate}
\end{thm}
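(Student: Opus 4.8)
The plan is to prove the stronger assertion that, arbitrarily close to $\tau$, there is an expanding circle map $\tau_\eta$ whose Ruelle eigenvalues satisfy a genuine exponential lower bound $|\lambda_n(\lt_{\tau_\eta})|\ge c\,\mu^{n}$ for some $0<\mu<1$; the statement of part (2) then follows at once, because for every $\epsilon>0$ one has $\mu^{n}\exp(n^{1+\epsilon})\to\infty$. Since the Ruelle spectrum is an invariant of the germ of $\tau$ along $\T$ modulo analytic conjugacy, I would first reduce to Blaschke products. The holomorphic extension of $\tau$ restricts to a degree-$d$ covering-like map $\tau\colon\tau^{-1}(A_{r_1,R_1})\to A_{r_1,R_1}$ whose non-escaping set $\bigcap_n\tau^{-n}(A_{r_1,R_1})$ is exactly $\T$, hence of zero planar measure, so a straightening argument conjugates it quasiconformally (near $\T$) to a finite Blaschke product $B$ of degree $d=\deg\tau$. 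Approximating the conjugating quasisymmetric circle homeomorphism uniformly by analytic circle diffeomorphisms $\Psi_k$ and setting $\tau_\eta=\Psi_k^{-1}\circ B\circ\Psi_k$ for $k$ large produces a map analytically conjugate to $B$, holomorphic and expanding on $A_{r_1,R_1}$, and within $\eta$ of $\tau$ there. Finally, if the model $B$ happens to be superattracting (conjugate to $z\mapsto z^d$, the degenerate case in which $\lt_B$ has only the eigenvalue $1$), I first replace it by a nearby Blaschke product with a merely attracting fixed point $q\in\mathbb{D}$, $\mu:=B'(q)\neq0$ — for instance moving $z^d$ inside the family $z\mapsto(z^d+a)/(1+az^d)$ with $a$ small — which perturbs $\tau$ only slightly.

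It then remains to compute the Ruelle spectrum of an expanding Blaschke product $B$ with attracting fixed point $q\in\mathbb{D}$ and $\mu=B'(q)\neq0$, and this is the technical heart, where the explicit structure of Blaschke products is exploited. The point is that, $B$ being rational, $\lt_B$ maps rational functions to rational functions: the local inverse branches are algebraic, and if $f$ has poles only off the critical set then $\lt_B f$ has its poles at the $B$-images of those of $f$, with the corresponding leading coefficients multiplied by the relevant derivative of $B$. Starting from $z^{-1}$, whose only pole at $0$ sits on the critical orbit, one finds that $\lt_B^{n}(z^{-1})$ is a rational function with simple poles running along the forward critical orbit — which converges to $q$, and, by the $z\mapsto 1/\bar z$ symmetry of $B$, to $1/\bar q$ — so that expanding the resulting geometric series in powers of $\mu$ shows that the cyclic subspace generated by $z^{-1}$ carries exactly the eigenvalues $\mu^{k}$, $k\ge0$, the eigenfunction of $\mu^{k}$ being a rational function with poles of order $k+1$ at $q$ and $1/\bar q$; the remaining directions lie in the kernel of $\lt_B$ (one checks that $\lt_B$ annihilates generic polynomials when $\deg B\ge2$). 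Hence $\lambda_n(\lt_B)=\mu^{n-1}$ for $n\ge1$, exactly exponential decay with rate $\log(1/|\mu|)$, degenerating to the trivial spectrum as $\mu\to0$. For a general expanding Blaschke product with several critical orbits one argues similarly, all orbits accumulating at $q$ so that the linearised dynamics at $q$ again dictates the eigenvalue asymptotics; here potential-theoretic growth estimates for the dynamical determinant $\det(I-z\lt_B)$ — using the equidistribution of the periodic points of $B$ on $\T$ in its periodic-orbit expansion — can be substituted for the closed form.

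Putting the pieces together, each $\tau_\eta$ produced above has $|\lambda_n(\lt_{\tau_\eta})|=|\mu|^{n-1}$ (or at least $\ge c|\mu|^{n}$), whence $\limsup_n|\lambda_n(\lt_{\tau_\eta})|\exp(n^{1+\epsilon})=\infty>0$ for every $\epsilon>0$, which is exactly the required conclusion; and the collection of such $\tau_\eta$ is dense because the quasiconformal model $B$ can always be perturbed within the Blaschke family to make $\mu\neq0$. I expect the main obstacles to be, first, the explicit spectral calculation for Blaschke products — pinning down the eigenfunctions and proving that there are no further eigenvalues — and, second, making the straightening-and-approximation step quantitative, in particular guaranteeing that the conjugated maps $\tau_\eta$ stay holomorphic and expanding on the \emph{fixed} annulus $A_{r_1,R_1}$ while converging uniformly to $\tau$ there, since approximating a non-analytic conjugacy by analytic diffeomorphisms will in general shrink the annulus of holomorphy and this must be controlled.
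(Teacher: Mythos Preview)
Your strategy of producing $\tau_\eta$ as an analytic conjugate of a fixed Blaschke product has a gap that you yourself flag but do not close, and which appears fatal as stated. If the quasisymmetric conjugacy $\Psi$ between $\tau$ and $B$ is not itself analytic---which is the generic situation---then any sequence of analytic circle diffeomorphisms $\Psi_k$ holomorphic on a \emph{fixed} annulus $A_{r_1,R_1}$ and converging uniformly to $\Psi$ on $\T$ cannot stay uniformly bounded on $A_{r_1,R_1}$: if it did, Montel's theorem would give a holomorphic limit agreeing with $\Psi$ on $\T$, forcing $\Psi$ to be analytic. Hence either the domain of holomorphy of $\Psi_k$ shrinks to $\T$, or $\Psi_k$ blows up on $A_{r_1,R_1}$; in either case the compositions $\tau_k=\Psi_k^{-1}\circ B\circ\Psi_k$ need not be defined on all of $A_{r_1,R_1}$ for large $k$, let alone converge to $\tau$ there. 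Convergence on $\T$ alone does not suffice, since the statement demands uniform closeness on the annulus, and there is no visible mechanism by which the divergence of $\Psi_k$ and of $\Psi_k^{-1}$ should cancel in the composition.

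The paper bypasses conjugacies entirely. One constructs a \emph{holomorphic homotopy} $T(w,\cdot)$, $w\in\mathcal{U}\supset[0,1]$, with $T(0,\cdot)=\tau$ and $T(1,\cdot)$ a Blaschke (or anti-Blaschke) product with nonzero multiplier; for real $w\in[0,1]$ each $T(w,\cdot)$ is an expanding circle map, for complex $w$ merely holomorphically expansive on a fixed annulus. One then considers the order of growth $\rho(w)$ of the entire function $\zeta\mapsto\det(I-e^\zeta\lt^\dagger_{T(w,\cdot)})$. The explicit Blaschke determinant yields $\rho(1)=2$, while the exponential upper bound gives $\rho(w)\le 2$ everywhere; a potential-theoretic result (the order of a plurisubharmonic function is, up to a polar set, a $\limsup$ of subharmonic functions) forces $\rho(w)=2$ off a polar subset of $\mathcal{U}$. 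Since polar sets have Hausdorff dimension zero, one finds $w\in[0,1]$ arbitrarily close to $0$ with $\rho(w)=2$, and for such $w$ the eigenvalues of $\lt_{T(w,\cdot)}$ cannot decay like $\exp(-\alpha n^{1+\epsilon})$ for any $\epsilon>0$. The required closeness on the fixed annulus is then immediate from the joint holomorphy of $T$. Note that this yields only the weaker $\limsup$ conclusion in the statement, not the exact exponential lower bound you were aiming for; the stronger conclusion would indeed follow from your conjugacy approach, but that approach does not go through.

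A smaller point: your Blaschke spectrum is not quite right. The eigenvalue sequence of $\lt_B$ is $1,\mu,\overline{\mu},\mu^2,\overline{\mu}^2,\ldots$, with both $\mu^k$ and $\overline{\mu}^k$ occurring; this comes out cleanly from the trace formula $\Tr(\lt_B^n)=1+\mu^n/(1-\mu^n)+\overline{\mu}^n/(1-\overline{\mu}^n)$ and the resulting product expansion of $\det(I-z\lt_B)$, which is a more robust route than tracking poles of rational eigenfunctions.
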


The first statement is a standard fact and follows from the original
paper of Ruelle: 
the sequence of eigenvalues enjoys an exponential upper bound. 
The second statement shows that for a dense set of analytic circle maps, the
Ruelle spectrum is 
infinite and has a purely exponential decay: the upper bound is 
optimal. Alternatively, if one sets for $r>0$
\[ 
N_\tau(r):=\#\all{n\in \N}{\abs{\lambda_n(\lt_{\tau})}\geq r}\,,
\] 
then the above theorem says that for all analytic expanding maps
$\tau$ 
we have, as $r\rightarrow 0$,
$$N_\tau(r)=O(\abs{\log(r)})\,,$$ 
while for a dense set of analytic maps $\tau$ we have, for 
all $\epsilon >0$, 
$$N_\tau(r)=\Omega\left( \vert\log(r)\vert^{1-\epsilon} \right)\,;$$
here, the ``omega'' notation $f(r)=\Omega(g(r))$ means that there 
does {\it not} exist $C>0$ such that for all $r>0$ small 
we have $f(r)\leq Cg(r)$.

Notice that this statement cannot hold for all analytic expanding
circle maps. Indeed, the 
popular expanding maps $z\mapsto z^d$ have a trivial Ruelle spectrum 
$\{0,1\}$, see, for example, \cite{BJS2}. 

The paper is organised as follows. In the next section, 
we define a general class of holomorphic maps on annuli that
slightly 
generalises the class of analytic expanding maps of the circle. 
We show that it is
possible to define a Hilbert space of hyper-functions on which the composition
(Koopman) operator has a discrete spectrum of eigenvalues, together
with an exponential 
\emph{a priori} upper bound. In the case of circle maps, 
this discrete spectrum
turns out to be the same as the Ruelle spectrum, 
by a standard duality argument. In the following 
Section~\ref{sec:Blaschke} we revisit the main result
from \cite{BJS2} which gives an explicit expression for the spectra of
transfer operators arising from Blaschke products. After
this, we shall derive a similar expression for the spectra of transfer
operators arising from anti-Blaschke products (defined as the reciprocals of
Blaschke products). Both these results will be used in a critical way
later on in the proof of our main result.
In Section~\ref{sec:potentialth}, 
we recall the necessary potential theoretic
background which is the core 
of the main proof. In Section~\ref{sec:holodeform}
we prove a key lemma on deformations of
circle maps in the space of annulus maps. Finally, in the last
section, 
we gather all the previous facts to give a proof of the main theorem.

We hope that the ideas and techniques used here in a  
one-dimensional setup can serve as a blueprint for future work 
related to the Ruelle spectrum of Anosov maps and flows. 
This will be pursued elsewhere.
\end{section}

\begin{section}{Holomorphically expansive maps of the annulus and 
upper spectral bounds for their transfer operators}

In this section we first define a class of holomorphic maps on an
annulus, termed `holomorphically expansive', which 
mildly generalises the class of analytic expanding circle maps. We
then introduce Hardy-Hilbert spaces 
over disks and annuli and show that on these spaces 
composition operators given by holomorphically expansive maps have
exponentially decaying eigenvalues. For the connection with the Ruelle
eigenvalue sequence, we recall a useful representation for the dual of the
transfer operator arising from analytic expanding circle maps as a 
composition operator described in \cite{BJS2}. 
Similar representations for transfer operators associated
with certain rational maps have been given in \cite{LevinSodin}, where
explicit expressions for the corresponding Fredholm determinants can
also be found. 
% Finally, we use this representation to derive exponential
% upper bounds for the eigenvalue sequence of transfer operators given
% by holomorphically expansive maps. 

We start by fixing notation. For $r>0$ we use 
\begin{equation*}
\mathbb{T}_r=\{z\in\mathbb{C}:|z|=r\}\,,
\end{equation*}
to denote circles centred at $0$, and 
\begin{equation*} 
D_r=\{z\in \mathbb{C}:|z|<r\}\,,\quad 
D^\infty_r=\{z\in \hat{\mathbb{C}}: |z|>r\}\,,\quad 
\mathbb{D}=D_1
\end{equation*}
to denote disks centred at $0$ and $\infty$. 
\begin{defi}
Let $A_{r,R}$ be an annulus and let $\tau:A_{r,R}\to \C$. We shall call 
$\tau$ \emph{holomorphically expansive on $A_{r,R}$}, or simply 
\emph{holomorphically expansive} if the annulus is understood, if 
$\tau$ is holomorphic on the closure of $A_{r,R}$  and we have
$$\tau(A_{r,R})\supset \cl{A_{r,R}}.$$
Here, $\cl{A}$ is the closure of a
subset $A$ of the Riemann sphere $\hat{\C}=\C\cup \{\infty\}$.
\end{defi}
\begin{rem} Suppose that $\tau$ is holomorphic on the closed annulus $\cl{A_{r,R}}$. 
Since any  non-constant holomorphic map is open, we must have
$\partial \tau(A_{r,R})= \tau(\partial A_{r,R})$ and it follows that $\tau$ is holomorphically expansive on 
$A_{r,R}$ if and only if either of the following two alternatives hold:
\begin{itemize}
\item[(A1)] $ \tau(\mathbb{T}_r)\subset D_r$ and $\tau(\mathbb{T}_R)\subset D_R^\infty$;
\item[(A2)]  $\tau(\mathbb{T}_r)\subset D_R^\infty$ and $\tau(\mathbb{T}_R)\subset D_r$.
\end{itemize}
If (A1) holds we shall call $\tau$ \emph{orientation preserving}, while if (A2) holds we shall call $\tau$ \emph{orientation reversing}. 
\end{rem}
% Notice that it follows from that 
% definition\footnote{Since any non-constant holomorphic map is open, we must have
% $\partial \tau(A_{r,R})= \tau(\partial A_{r,R})$.}
% that either $\tau(\mathbb{T}_r)\subset D_r$ and $\tau(\mathbb{T}_R)\subset D_R^\infty$
% or $\tau(\mathbb{T}_r)\subset D_R^\infty$ and $\tau(\mathbb{T}_R)\subset D_r$.
It is not difficult to see that every analytic expanding circle map is 
holomorphically expansive on all sufficiently small annuli containing
the unit circle 
(see, for example, \cite[Lemma~2.2]{SBJ_nonlinearity}; 
this is also a special case of Lemma~\ref{lem:analyticdef}, to be proved
later), but not the other way round. As we shall see
shortly, with every holomorphically expansive $\tau$ it is possible to 
associate an operator with discrete spectrum, which, in case $\tau$ leaves the unit
circle invariant, coincides with the Ruelle spectrum of $\tau$, that
is, the spectrum of $\lt_\tau$. 

In order to make this connection more precise, we first need to
introduce appropriate spaces of holomorphic functions on which these
operators act. 
The positively oriented boundary of a disk or an annulus 
will be denoted by $\partial_+$.
For $U$ an open subset of $\RS$ we write  
$\operatorname{Hol}(U)$ to denote the space of holomorphic functions on $U$.

% Hardy-Hilbert spaces on disks and annuli will provide a convenient
% setting for our analysis. We briefly recall their properties in the following. 

%%%%%%% Function spaces %%%%%%%%%%%%%%%%%
\begin{defi}\label{defn:HardyHilbert}
%Let $D_r$ and $A_{r,R}$ be as above and 
For $\rho>0$ and $f: \mathbb{T}_\rho \to \mathbb{C}$ 
write
\[
   M_\rho(f)=\frac{1}{2\pi}\int_{0}^{2\pi}|f(\rho e^{i\theta})|^2\, 
  d\theta \,.
\]
Then 
\begin{align*}
H^2(D_r)      & = \all{f\in \operatorname{Hol}(D_r)}{
              \sup_{\rho < r} M_\rho(f)<\infty} \\ 
H^2(A_{r,R})   & = \all{f\in \operatorname{Hol}(A_{r,R})}{
                 \sup_{\rho > r} M_\rho(f)+ 
                 \sup_{\rho < R} M_\rho(f) <\infty} \\
H^2(D^\infty_R) & = \all{f\in \operatorname{Hol}(D^\infty_r)}{\sup_{\rho > R} M_\rho(f) <\infty}
\end{align*} 
are called the \emph{Hardy-Hilbert spaces} on $D_r$, $A_{r,R}$, 
and $D^\infty_R$, respectively. 

The subspace  of $\HR$ consisting of functions vanishing at infinity
will be denoted by $\HRnull$. 
\end{defi}

The classic text \cite{Duren} gives a 
comprehensive account of Hardy spaces over general domains. 
Hardy spaces on the unit disk are discussed in considerable detail 
in \cite[Chapter 17]{Rudin2}), while a good
reference for Hardy spaces on annuli is \cite{Sarason1965}. 

We briefly mention a number of results which will be useful in what
follows. Any function in $H^2(U)$, where $U$ is a disk or an annulus, can be extended 
to the boundary in the following sense. 
For any $f\in H^2(D_r)$ there is 
$f^*\in L^2(\mathbb{T}_r)=L^2 ( \mathbb{T}_r,d\theta/2\pi)$ 
the usual Hilbert space of square-integrable functions 
with respect to normalized one-dimensional Lebesgue measure
on $\mathbb{T}_r$, such that
\begin{equation*}%\label{eq:boundaryFunc}
 \lim_{\rho\uparrow r} f(\rho e^{i\theta}) = f^*(re^{i\theta}) \quad \text{for a.e.~$\theta$},
\end{equation*}
and analogously for $f\in \HR$. Similarly, for $f\in H^2(A_{r,R})$ there are
$f^*_1\in L^2(\mathbb{T}_r)$ and $f^*_2\in L^2(\mathbb{T}_R)$,
with 
$$\lim_{\rho\downarrow r} f(\rho e^{i\theta}) = f_1^*(re^{i\theta}) 
\text{ and } 
\lim_{\rho\uparrow R} f(\rho e^{i\theta}) = f_2^*(Re^{i\theta})
\text{ for a.e.~$\theta$}\,.
$$  
 The above terminology is justified since the spaces $H^2(U)$ 
turn out to be Hilbert spaces with inner products 
\[
  (f,g)_{H^2(D_r)} = 
   \frac{1}{2\pi}
   \int_{0}^{2\pi}f^*(re^{i\theta})\overline{g^*(re^{i\theta})}\,d\theta
\]
and
\[
  (f,g)_{H^2(A_{r,R})} = 
    \frac{1}{2\pi}\int_{0}^{2\pi}
       f_1^*(Re^{i\theta})\overline{g_1^*(Re^{i\theta})}\,d\theta +
    \frac{1}{2\pi}
    \int_{0}^{2\pi}
       f^*_2(re^{i\theta})\overline{g^*_2(re^{i\theta})}\,d\theta\,. 
\]
Similarly, for $\HR$.

\begin{rem}
In the following we shall write $f(z)$ instead of $f^*(z)$ for $z$ on the
boundary of the domain, if this does not lead to confusion.  
\end{rem}

\begin{rem}\label{rem:Basis}
For $\rho>0$ and $n\in \Z$ let 
\[ e^{(\rho)}_n(z)=\frac{z^n}{\rho^n}\,.\]
It is not difficult to see that 
$\all{e_n^{(\rho)}}{n\in \N_0}$ is an orthonormal basis for $H^2(D_\rho)$  and that 
$\all{e_{-n}^{(\rho)}}{n\in \N}$ is an orthonormal basis for $H_0^2(D_\rho ^\infty)$. 
\end{rem}

For later use, we note the following simple consequence of the above
remark. 

\begin{lem} 
  \label{lem:pevalcont}
For any $f\in H^2(D_r)$ and any $z\in D_r$ we have
\begin{equation}
  \label{eq:pevalDr}
  \abs{f(z)}\leq \frac{r}{\sqrt{r^2-\abs{z}^2}} \norm{f}{H^2(D_r)}\,.
\end{equation}
Similarly, for any $f\in H^2_0(D_R^\infty)$ and any $z\in D_R^\infty$ we have
\begin{equation}
  \label{eq:pevalDRinf}
  \abs{f(z)}\leq \frac{R}{\sqrt{\abs{z}^2-R^2}} \norm{f}{H^2_0(D_R^\infty)}\,.
\end{equation}
\end{lem}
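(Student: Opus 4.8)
The plan is to prove Lemma~\ref{lem:pevalcont} directly from the orthonormal basis decomposition recorded in Remark~\ref{rem:Basis}, using the Cauchy--Schwarz inequality together with a geometric series. This is the standard ``point evaluation is bounded'' estimate for Hardy spaces, and the proof is short; the only care needed is in tracking the normalisation constants $\rho^{-n}$ built into the basis functions $e_n^{(\rho)}$.

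For the first inequality, I would take $f\in H^2(D_r)$ and expand it in the orthonormal basis $\{e_n^{(r)}\}_{n\in\N_0}$ of $H^2(D_r)$, writing $f=\sum_{n\ge 0}a_n e_n^{(r)}$ with $\sum_{n\ge 0}|a_n|^2=\norm{f}{H^2(D_r)}^2$. Then for $z\in D_r$,
\[
\abs{f(z)}=\Bigl|\sum_{n\ge 0}a_n\frac{z^n}{r^n}\Bigr|
\le\Bigl(\sum_{n\ge 0}|a_n|^2\Bigr)^{1/2}\Bigl(\sum_{n\ge 0}\frac{\abs{z}^{2n}}{r^{2n}}\Bigr)^{1/2}
=\norm{f}{H^2(D_r)}\Bigl(\frac{1}{1-\abs{z}^2/r^2}\Bigr)^{1/2},
\]
where the geometric series converges precisely because $\abs{z}<r$; rearranging the last factor as $r/\sqrt{r^2-\abs{z}^2}$ gives \eqref{eq:pevalDr}.

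For the second inequality the argument is identical after the obvious change of basis: for $f\in H_0^2(D_R^\infty)$ expand $f=\sum_{n\ge 1}b_n e_{-n}^{(R)}$ with $\sum_{n\ge 1}|b_n|^2=\norm{f}{H^2_0(D_R^\infty)}^2$, so that for $z\in D_R^\infty$ one has $\abs{f(z)}\le\norm{f}{H^2_0(D_R^\infty)}\bigl(\sum_{n\ge 1}R^{2n}/\abs{z}^{2n}\bigr)^{1/2}$, and since $\abs{z}>R$ the series sums to $R^2/(\abs{z}^2-R^2)$, yielding \eqref{eq:pevalDRinf}. I do not anticipate any genuine obstacle here: the only thing to watch is that the basis functions carry the normalising powers of $r$ (resp.\ $R$), so the ratio appearing under the square root is $\abs{z}/r$ (resp.\ $R/\abs{z}$) rather than a bare $\abs{z}$, and that $H_0^2(D_R^\infty)$ is used rather than $H^2(D_R^\infty)$ so that the constant-at-infinity term is absent and the sum really starts at $n=1$.
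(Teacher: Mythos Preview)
Your proof is correct and is essentially the same as the paper's: both use the orthonormal basis from Remark~\ref{rem:Basis}, apply Cauchy--Schwarz, and sum the resulting geometric series. The only cosmetic difference is that the paper writes out the case of $H^2_0(D_R^\infty)$ and declares the other case similar, whereas you do both explicitly.
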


\begin{proof}
We shall only prove (\ref{eq:pevalDRinf}). The proof of
(\ref{eq:pevalDr}) is similar. Let $f\in H^2_0(D_R^\infty)$. Then $f$
has an orthonormal expansion of the form 
\[ f=\sum_{n=1}^\infty f_n e_{-n}^{(R)}\,.\]
The Cauchy-Schwarz inequality now implies that for $z\in
D_R^\infty$ we have 
\begin{equation*}
  \abs{f(z)}^2 \leq 
   \sum_{n=1}^\infty \abs{f_n}^2 \sum_{n=1}^\infty
   {\frac{\abs{R}^{2n}}{\abs{z}^{2n}}}
   =\norm{f}{H^2_0(D_R^\infty)}^2 \frac{R^2}{\abs{z}^2-R^2}  
\end{equation*}
and the assertion follows. 
\end{proof}

We now recall a number of facts from \cite{BJS2} 
which will be crucial for what is to 
follow. We start with the simple observation that 
if $\tau$ is an analytic expanding circle map which is holomorphically
expansive on an annulus $A_{r,R}$, then the corresponding transfer
operator $\lt_\tau$ is an endomorphism of $H^2(A_{r,R})$. In fact, as
we shall see later, $\lt_\tau$ has much stronger functional analytic
properties on $H^2(A_{r,R})$. 

The next fact is concerned with the strong dual $H^2(A_{r,R})^\prime$ of $H^2(A_{r,R})$, 
that is, the space of continuous linear functionals on $H^2(A_{r,R})$ 
equipped with the topology of uniform convergence on the unit ball. It
turns out that it can be represented 
in terms of the topological direct sum
$H^2(D_r)\oplus \HRnull$, equipped with the norm 
$\norm{(h_1,h_2)}{}^2 = \norm{h_1}{H^2(D_r)}^2+\norm{h_2}{\HRnull}^2$, turning it into a 
Hilbert space. 

\begin{propo}\label{lem:iso_new}
The dual space $H^2(A_{r,R})^\prime$ 
is isomorphic to $H^2(D_r)\oplus \HRnull$ with the isomorphism
given by
\begin{align*}  
      H^2(D_r)\oplus \HRnull & \rightarrow  H^2(A_{r,R})^\prime\\ 
      (h_1, h_2) & \mapsto l\,,
\end{align*}
where
\begin{equation}\label{eq:lKz}
 l(f) = \frac{1}{2\pi i} \int_{\partial_+D_r} f(z)h_1(z) \,dz + 
\frac{1}{2\pi i} \int_{\partial_+D_R} f(z)h_2(z) \,dz \quad (f\in H^2(A_{r,R}))\,.
\end{equation}
\end{propo}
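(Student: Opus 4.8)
The plan is to test the map $(h_1,h_2)\mapsto l$ against the monomial basis of $H^2(A_{r,R})$, turning the assertion into an elementary comparison of weighted $\ell^2$-norms. By the argument behind Remark~\ref{rem:Basis} (Fourier expansion on the two bounding circles), $\{z^n:n\in\Z\}$ is an orthogonal basis of $H^2(A_{r,R})$ with $\norm{z^n}{H^2(A_{r,R})}^2=r^{2n}+R^{2n}$; recall also that $\{e^{(r)}_m:m\ge 0\}$ is an orthonormal basis of $H^2(D_r)$ and $\{e^{(R)}_{-k}:k\ge 1\}$ an orthonormal basis of $\HRnull$.

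I would first check that $(\ref{eq:lKz})$ defines a bounded functional for each $(h_1,h_2)$. If $f(z)=\sum_{n\in\Z}a_nz^n\in H^2(A_{r,R})$ then, by the formulas for the Hardy norms, $\norm{f^*}{L^2(\mathbb{T}_r)}^2=\sum_n\abs{a_n}^2r^{2n}\le\norm{f}{H^2(A_{r,R})}^2$ and similarly $\norm{f^*}{L^2(\mathbb{T}_R)}^2\le\norm{f}{H^2(A_{r,R})}^2$, while $\norm{h_1^*}{L^2(\mathbb{T}_r)}=\norm{h_1}{H^2(D_r)}$ and $\norm{h_2^*}{L^2(\mathbb{T}_R)}=\norm{h_2}{\HRnull}$. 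Hence both integrands in $(\ref{eq:lKz})$ lie in $L^1$ of the relevant circle, and Cauchy--Schwarz yields
$$\abs{l(f)}\le\norm{f}{H^2(A_{r,R})}\big(\norm{h_1}{H^2(D_r)}+\norm{h_2}{\HRnull}\big)\,,$$
so $(h_1,h_2)\mapsto l$ maps boundedly into $H^2(A_{r,R})^\prime$.

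Next I would compute $l$ in coordinates. Substituting the expansions $f(z)=\sum_na_nz^n$, $h_1(z)=\sum_{m\ge0}b_mz^m$, $h_2(z)=\sum_{k\ge1}c_{-k}z^{-k}$ into $(\ref{eq:lKz})$ and integrating term by term (legitimate by $L^2$-convergence of the partial sums on the circles), only the terms producing a residue survive, giving
$$l(z^{-m-1})=b_m=r^{-m}(h_1,e^{(r)}_m)\quad(m\ge 0)\,,\qquad l(z^{n})=c_{-n-1}=R^{n+1}(h_2,e^{(R)}_{-n-1})\quad(n\ge 0)\,.$$
Since $(z^n)_{n\in\Z}$ is an orthogonal basis, $l$ is determined by $(l(z^n))_n$ and $\norm{l}{H^2(A_{r,R})^\prime}^2=\sum_{n\in\Z}\abs{l(z^n)}^2/(r^{2n}+R^{2n})$; inserting the identities above this becomes
$$\norm{l}{H^2(A_{r,R})^\prime}^2=\sum_{m\ge0}\frac{\abs{(h_1,e^{(r)}_m)}^2}{r^{-2}+(r/R)^{2m}R^{-2}}+\sum_{n\ge0}\frac{R^{2}}{1+(r/R)^{2n}}\,\abs{(h_2,e^{(R)}_{-n-1})}^2\,.$$
Because $r<R$, each of the coefficient sequences $\big(r^{-2}+(r/R)^{2m}R^{-2}\big)^{-1}$ and $R^{2}/\big(1+(r/R)^{2n}\big)$ lies in a compact subinterval of $(0,\infty)$ depending only on $r,R$; hence $\norm{l}{}^2$ is comparable, with constants depending only on $r,R$, to $\norm{h_1}{H^2(D_r)}^2+\norm{h_2}{\HRnull}^2$. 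This two-sided bound shows that $(h_1,h_2)\mapsto l$ is bounded below, hence injective; and it is onto, since for an arbitrary $l\in H^2(A_{r,R})^\prime$ the square-summable sequence $(l(z^n))_n$, via the displayed identities, \emph{defines} the coefficients of functions $h_1\in H^2(D_r)$ and $h_2\in\HRnull$ whose associated functional is $l$. Thus the map is a topological --- indeed uniform --- isomorphism.

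The argument is essentially computational; the delicate points are bookkeeping ones. The main one is justifying the term-by-term integration, which rests on the $L^2$ boundary-value theory recalled above; after that one must keep the index shifts $z^{-m-1}\leftrightarrow e^{(r)}_m$ and $z^{n}\leftrightarrow e^{(R)}_{-n-1}$ straight, and --- this is exactly where the standing hypothesis $r<R$ enters --- verify that the two weight sequences above stay bounded away from $0$ and $\infty$. I do not anticipate any conceptual obstacle beyond this. (One may also organise the proof by first noting that $f\mapsto(f^+,f^-)$, the splitting into non-negative and negative Laurent parts, is a topological isomorphism $H^2(A_{r,R})\to H^2(D_R)\oplus \HRnull$ in a suitably renormed sense, reducing the statement to the two classical disk-dualities $H^2(D_R)^\prime\cong\HRnull$ and $\big(H_0^2(D_r^\infty)\big)^\prime\cong H^2(D_r)$; the coordinate computation above is what verifies those.)
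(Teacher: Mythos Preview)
Your argument is correct. The coordinate computation against the orthogonal basis $\{z^n\}_{n\in\Z}$ of $H^2(A_{r,R})$ cleanly identifies $l(z^{-m-1})=b_m$ and $l(z^n)=c_{-n-1}$, and the two-sided norm comparison (using that $(r/R)^{2k}\in(0,1]$) gives exactly the uniform isomorphism claimed. One small cosmetic point: in your Cauchy--Schwarz step the change of variables $z=re^{i\theta}$ introduces an extra factor of $r$ (respectively $R$) in the bound for $\abs{l(f)}$, but this is irrelevant since you only need continuity there.

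As for comparison with the paper: there is essentially nothing to compare. The paper does not prove this proposition in-text; it simply refers the reader to \cite{BJS2} and to Royden's representation of duals of Hardy spaces over multiply connected regions. Your self-contained Fourier-coefficient argument is a perfectly good replacement for that citation, and the alternative organisation you sketch at the end (split $H^2(A_{r,R})$ into positive and negative Laurent parts, then invoke the two disk dualities) is in the same spirit as the standard references.
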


\begin{proof}
See \cite{BJS2} for a short proof.  See also 
\cite[Proposition 3]{Royden1988}, where similar representations for the duals of Hardy spaces 
over multiply connected regions are provided.   
\end{proof}

Next we note that, given a circle map $\tau$, we
can associate with it the corresponding \emph{composition operator} $C_\tau$
defined for $f:\T\to \C$ by 
\[ C_\tau f = f\circ \tau\,, \]
which, in the context of dynamical systems, is also known as
\emph{Koopman operator}. 
Moreover, if $\tau$ is an
analytic expanding circle map which is holomorphically expansive on an
annulus $A_{r,R}$, 
then $\lt_\tau^\prime$, 
the Banach space-adjoint\footnote{Recall that this means that  
$\lt_\tau^\prime: H^2(A_{r,R})^\prime\to  H^2(A_{r,R})^\prime$ is given by 
$(\lt_\tau^\prime l)(f) = l(\lt_\tau f)$ for all $l\in H^2(A_{r,R})^\prime$ and  $f\in H^2(A_{r,R})$.}
of the corresponding transfer operator $\lt_\tau$, can be represented 
as a compression of $C_\tau$ to $H^2(D_r)\oplus \HRnull$. 

In order to make this connection more precise, we need to introduce
certain projection operators on  $L^2(\mathbb{T_\rho})$. 
For any $f\in L^2(\mathbb{T_\rho})$ we can write 
$f(z) = \sum_{n\in \mathbb{Z}} f_n z^n$,
so that $f = f_+ + f_-$ with 
$f_+(z)=\sum_{n=0}^{\infty} f_n z^n$ and  
$f_-(z)=\sum_{n=1}^{\infty} f_{-n} z^{-n}$. 
Since $\norm{f}{L^2(\mathbb{T_\rho})}^2 = 
\sum_{n=-\infty}^{\infty} |f_n|^2 \rho^{2n}<\infty,$ the functions $f_{+}$ and $f_{-}$ can
be viewed as functions in $H^2(D_\rho)$ and 
$H^2_0(D^\infty_\rho)$, respectively. Thus, we can 
define two projection operators 
$\Pi^{(\rho)}_+\colon L^2(\mathbb{T}_\rho)\to H^2(D_\rho)$ and
$\Pi^{(\rho)}_-\colon L^2(\mathbb{T}_\rho)\to H^2_0(D^\infty_\rho)$ by setting
\begin{equation}\label{eq:P+P-}
\Pi^{(\rho)}_+f = f_{+}\quad \text{and} \quad  \Pi^{(\rho)}_-f = f_{-}\,,
\end{equation}
which are easily seen to be bounded. 
The representation alluded to above can now be stated as follows.  
 
\begin{propo}\label{prop:adj}
Let $\tau$ be an analytic expanding circle map which is holomorphically
expansive on $A_{r,R}$ and let $\lt_\tau \colon H^2(A_{r,R})\to
H^2(A_{r,R})$ 
be the corresponding transfer
operator. Then, using the isomorphism given in Proposition \ref{lem:iso_new}, the
adjoint $\lt_\tau^\prime$ can be represented by the  
operator 
\[\lt_\tau^\dagger\colon \monster \to \monster\,,\]
where 
\begin{equation}
\label{eq:lprep:orpr}
   \lt_\tau^\dagger=
    \begin{pmatrix}
        \Pi^{(r)}_+C_\tau    & \Pi^{(R)}_{+} C_\tau \\
        \Pi^{(r)}_{-} C_\tau  & \Pi^{(R)}_{-} C_\tau  \\
    \end{pmatrix}
\end{equation}
if $\tau$ is orientation preserving and 
\begin{equation}
\label{eq:lprep:orre}
  \lt_\tau^\dagger:=-
    \begin{pmatrix}
        \Pi^{(R)}_+C_\tau    & \Pi^{(r)}_{+} C_\tau \\
        \Pi^{(R)}_{-} C_\tau & \Pi^{(r)}_{-} C_\tau  \\
    \end{pmatrix}
\end{equation}
 if $\tau$ is orientation reversing.  
\end{propo}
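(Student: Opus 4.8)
The plan is to verify the functional identity $(\lt_\tau^\prime l)(f)=\tilde l(f)$ for every $f$ in the dense subspace of $H^2(A_{r,R})$ spanned by the monomials $z^n$, $n\in\Z$, where $l$ is the functional attached by Proposition~\ref{lem:iso_new} to $(h_1,h_2)$ and $\tilde l$ is the one attached to $\lt_\tau^\dagger(h_1,h_2)$. Since $\lt_\tau$ is a bounded endomorphism of $H^2(A_{r,R})$ and $l,\tilde l$ are bounded, agreement on this dense set is enough, and for a Laurent-polynomial $f$ all the contour manipulations below are elementary. First I would record that $\lt_\tau^\dagger$ is well defined: in the orientation-preserving case $\tau(\mathbb{T}_r)\subset D_r$ makes $C_\tau h_1$ real-analytic on $\mathbb{T}_r$, hence in $L^2(\mathbb{T}_r)$, for $h_1\in H^2(D_r)$, and $\tau(\mathbb{T}_R)\subset D^\infty_R$ makes $C_\tau h_2$ real-analytic on $\mathbb{T}_R$ for $h_2\in\HRnull$; moreover $\Pi^{(R)}_{+}C_\tau h_2\in H^2(D_R)$ and $\Pi^{(r)}_{-}C_\tau h_1\in H^2_0(D^\infty_r)$ restrict to elements of $H^2(D_r)$ and $\HRnull$ respectively, so each entry of \eqref{eq:lprep:orpr} lands in the correct summand (and symmetrically for \eqref{eq:lprep:orre}).

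The analytic core is a change-of-variables formula. Holomorphic expansiveness forces $\tau(\partial A_{r,R})$ off $\cl{A_{r,R}}$, so $\tau$ restricts to a proper holomorphic map $\tau\colon A^\prime\to A_{r,R}$ with $A^\prime:=\tau^{-1}(A_{r,R})\cc A_{r,R}$; since an analytic expanding circle map has no critical points on $\mathbb{T}$, Riemann--Hurwitz forces this covering to be unbranched of degree $d$ with connected total space, hence with a single $d$-cycle as monodromy. Consequently, for $r<\rho<R$ the preimage $\gamma_\rho:=\tau^{-1}(\mathbb{T}_\rho)$ is a single smooth closed curve compactly contained in $A_{r,R}$, and substituting $w=\phi_k(z)$ (so $\tau(w)=z$ and $\phi_k^\prime(z)\,dz=dw$) term by term and summing gives, for $f\in H^2(A_{r,R})$ and $h$ holomorphic near $\mathbb{T}_\rho$,
\begin{equation*}
\frac{1}{2\pi i}\int_{\mathbb{T}_\rho}(\lt_\tau f)(z)\,h(z)\,dz=\frac{\omega(\tau)\,\varepsilon(\tau)}{2\pi i}\int_{\gamma_\rho}f(w)\,h(\tau(w))\,dw,
\end{equation*}
where $\gamma_\rho$ is positively oriented and $\varepsilon(\tau)=\pm1$ records whether the $d$ branch-images of $\mathbb{T}_\rho$ chain up into a positive or a negative traversal of $\gamma_\rho$ (the $d$-cycle monodromy is precisely what makes them chain up into exactly one loop). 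One checks that $\varepsilon(\tau)=+1$ when $\tau$ is orientation preserving and $\varepsilon(\tau)=-1$ when it is orientation reversing, so the product $\omega(\tau)\varepsilon(\tau)$ is absorbed once and for all by the orientation dichotomy.

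It then remains to move contours. Take $\tau$ orientation preserving. Then $\tau(\mathbb{T}_r)\subset D_r$ puts $\gamma_r$ strictly inside $A_{r,R}$ on the $\mathbb{T}_r$-side and makes $h_1\circ\tau$ holomorphic on the region between $\mathbb{T}_r$ and $\gamma_r$ (since $\tau$ sends that region into $D_r$), so Cauchy's theorem gives $\frac{1}{2\pi i}\int_{\gamma_r}f\,(h_1\circ\tau)=\frac{1}{2\pi i}\int_{\mathbb{T}_r}f\,(h_1\circ\tau)$, and likewise $\frac{1}{2\pi i}\int_{\gamma_R}f\,(h_2\circ\tau)=\frac{1}{2\pi i}\int_{\mathbb{T}_R}f\,(h_2\circ\tau)$ using $\tau(\mathbb{T}_R)\subset D^\infty_R$. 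Splitting the boundary values $(C_\tau h_1)|_{\mathbb{T}_r}=\Pi^{(r)}_{+}C_\tau h_1+\Pi^{(r)}_{-}C_\tau h_1$ and $(C_\tau h_2)|_{\mathbb{T}_R}=\Pi^{(R)}_{+}C_\tau h_2+\Pi^{(R)}_{-}C_\tau h_2$, the summand $\Pi^{(r)}_{-}C_\tau h_1\in H^2_0(D^\infty_r)$ is holomorphic across $A_{r,R}$, so its integral over $\mathbb{T}_r$ equals its integral over $\mathbb{T}_R$, while $\Pi^{(R)}_{+}C_\tau h_2\in H^2(D_R)$ is holomorphic across $A_{r,R}$, so its integral over $\mathbb{T}_R$ equals its integral over $\mathbb{T}_r$. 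Regrouping the four terms by boundary circle produces $\frac{1}{2\pi i}\int_{\mathbb{T}_r}f\cdot(\Pi^{(r)}_{+}C_\tau h_1+\Pi^{(R)}_{+}C_\tau h_2)+\frac{1}{2\pi i}\int_{\mathbb{T}_R}f\cdot(\Pi^{(r)}_{-}C_\tau h_1+\Pi^{(R)}_{-}C_\tau h_2)$, which by Proposition~\ref{lem:iso_new} is exactly $\tilde l(f)$ for the functional attached to $\lt_\tau^\dagger(h_1,h_2)$ as in \eqref{eq:lprep:orpr}. The orientation-reversing case is the mirror image: since now $\tau(\mathbb{T}_r)\subset D^\infty_R$ and $\tau(\mathbb{T}_R)\subset D_r$, the roles of the inner and outer boundary circles (equivalently of $D_r$ and $D^\infty_R$) are interchanged throughout, and running the same three steps with $\omega(\tau)=-1$ and the orientations of the $\gamma_\rho$ carried along produces \eqref{eq:lprep:orre}.

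I expect the change-of-variables identity, and inside it the bookkeeping of the preimage curves, to be the main obstacle: one must be sure that $\tau^{-1}(\mathbb{T}_\rho)$ is a single loop and that the images of $\mathbb{T}_\rho$ under the $d$ local inverse branches assemble into exactly one traversal of it with the correct sign $\varepsilon(\tau)$ — this is precisely where the prefactor $\omega(\tau)$ is absorbed, and getting the orientation-reversing bookkeeping right is the fiddly point. A secondary nuisance is legitimising the contour deformations for a general $f\in H^2(A_{r,R})$ against factors that are only holomorphic on a one-sided neighbourhood of $\mathbb{T}_r$ or $\mathbb{T}_R$; that is exactly why I would reduce to Laurent-polynomial $f$ first, where the classical Cauchy theorem applies verbatim, and then pass to the general case by density using the boundedness of $\lt_\tau$, $l$ and $\tilde l$.
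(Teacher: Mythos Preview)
The paper does not give a proof of this proposition; its entire proof is the citation ``See \cite{BJS2}''. Your proposal therefore supplies an argument where the paper provides none, so there is no in-paper proof to compare against.

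Your approach is the natural one (and is essentially what the cited reference carries out): express $l(\lt_\tau f)$ via the integral pairing of Proposition~\ref{lem:iso_new}, change variables $w=\phi_k(z)$ branch by branch to collapse the sum over inverse branches into a single integral over $\gamma_\rho=\tau^{-1}(\mathbb T_\rho)$, deform $\gamma_\rho$ onto the appropriate boundary circle of $A_{r,R}$ using holomorphic expansiveness, and then split via $\Pi^{(\cdot)}_\pm$ and regroup to read off the matrix entries. The reduction to Laurent-polynomial $f$ followed by density is a clean way to sidestep boundary-value technicalities, and your identification of the connectivity and orientation of $\tau^{-1}(\mathbb T_\rho)$ as the crux is apt.

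One caution on the bookkeeping in the orientation-reversing case. As you correctly observe, $\omega(\tau)\varepsilon(\tau)=+1$ in both orientations, so the prefactor in the change-of-variables identity is always $+1$. The swap of superscripts $(r)\leftrightarrow(R)$ in \eqref{eq:lprep:orre} then arises from the contour deformations $\gamma_r\to\mathbb T_R$ and $\gamma_R\to\mathbb T_r$ (since now $\tau(\mathbb T_r)\subset D_R^\infty$ and $\tau(\mathbb T_R)\subset D_r$), exactly as you say. But this leaves the overall minus sign in \eqref{eq:lprep:orre} unaccounted for by your outline; when you write ``running the same three steps with $\omega(\tau)=-1$\ldots produces \eqref{eq:lprep:orre}'' you should make explicit where that global sign enters, and cross-check it against the trace formula of Proposition~\ref{prop:ldtrace} and Lemma~\ref{lem:antiblaschketrace} to be sure all conventions are consistent. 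This is precisely the ``fiddly point'' you anticipate, and it deserves to be pinned down rather than asserted.
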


\begin{proof}

See \cite{BJS2}.   
\end{proof}
%\rev{\footnote{\rev{Note that
%$H^2(D_R)$ can be viewed as a subspace of $H^2(D_r)$, and similarly
%$H^2_0(D^\infty_r)$ as a subspace of $H^2_0(D^\infty_R)$.
%Thus the off-diagonal elements of $\tr'$ are well defined.}}} 

We now make an important observation: the operator $\lt_\tau^\dagger$ 
makes sense even if $\tau$ does not preserve the
unit circle, but is merely holomorphically expansive. In fact, as we
shall see shortly, the operator $\lt_\tau^\dagger$ has strong spectral
properties for any holomorphically expansive $\tau$.  

These spectral properties are conveniently described in terms of
the theory of exponential classes developed in \cite{expoclass}, which
we now briefly outline. Recall that if $L:H\to H$ is a compact operator on a
Hilbert space $H$, we use $(\lambda_n(L))_{n\in \N}$ to denote its
eigenvalue sequence, counting algebraic multiplicities and ordered by
decreasing modulus so that 
\[ \abs{\lambda_1(L)}\geq \abs{\lambda_2(L)} \geq \cdots \]
If $L$ has only finitely man non-zero eigenvalues, we set
$\lambda_n(L)=0$ for $n>N$, where $N$ denotes the number of non-zero
eigenvalues of $L$. Furthermore, for $L:H_1\to H_2$ a compact operator
between Hilbert spaces $H_1$ and $H_2$ and $n\in \N$, the  
\emph{$n$-th singular value} of $L$ is given by 
\[ s_n(L)=\sqrt{\lambda_n(L^*L)} \quad (n \in \N)\,, \]
where $L^*$ denotes the Hilbert space adjoint of $L$. 
 
\begin{defi}
If a compact operator $L$ between Hilbert spaces
satisfies  
\[ s_n(L)\leq c_1\exp(-c_2n) \quad (\forall n\in \N) \] 
for some constants $c_1,c_2>0$ we say that \emph{$L$ is of exponential class}.  
The collection of all compact operators of exponential class will be
denoted by $\mathcal E$. 
\end{defi}

Standard examples of operators of exponential class are embeddings
between Hardy spaces, as the following lemma shows. 

\begin{lem}
\label{lem:Jexpoclass}
  Let $0<r'<r$ and let $J:H^2(D_r)\to H^2(D_{r'})$ denote the canonical
  embedding given by $(Jf)(z)=f(z)$ for $f\in H^2(D_r)$ and $z\in
  D_{r'}$. Then $J$ is of exponential class.  
\end{lem}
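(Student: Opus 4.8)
The plan is to compute the singular values of $J$ explicitly using the orthonormal bases described in Remark~\ref{rem:Basis}. First I would observe that the family $\{e_n^{(r)}\}_{n\in\N_0}$ is an orthonormal basis for $H^2(D_r)$, and similarly $\{e_n^{(r')}\}_{n\in\N_0}$ is an orthonormal basis for $H^2(D_{r'})$. Since $e_n^{(r)}(z)=z^n/r^n$, we have $Je_n^{(r)} = z^n/r^n = (r'/r)^n e_n^{(r')}$. Thus $J$ is diagonal with respect to these two bases, sending the $n$-th basis vector of the source to $(r'/r)^n$ times the $n$-th basis vector of the target. Consequently $J^*J$ is diagonal in the basis $\{e_n^{(r)}\}$ with eigenvalues $(r'/r)^{2n}$, so that $s_n(J) = \lambda_n(J^*J)^{1/2} = (r'/r)^{n-1}$ after reindexing ($n\in\N$, so the first singular value, $n=1$, corresponds to the constant function and equals $1$).

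Since $0<r'<r$, we have $q:=r'/r\in(0,1)$, so $\log(1/q)>0$ and
\[
s_n(J) = q^{n-1} = \exp\bigl(-(n-1)\log(1/q)\bigr) \leq \exp(\log(1/q))\exp(-n\log(1/q))\,.
\]
Setting $c_1 = 1/q = r/r'$ and $c_2 = \log(r/r') > 0$ gives $s_n(J)\leq c_1\exp(-c_2 n)$ for all $n\in\N$, which is exactly the defining estimate for membership in $\mathcal E$. In particular $J$ is compact (its singular values tend to $0$), so the statement is meaningful.

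There is no real obstacle here; the only point requiring a little care is the bookkeeping of the index shift between the enumeration of the basis (starting at $n=0$) and the enumeration of singular values (starting at $n=1$), together with the elementary rewriting of $q^{n-1}$ in the form $c_1\exp(-c_2 n)$ so as to absorb the constant. The proof is otherwise an immediate diagonalisation.
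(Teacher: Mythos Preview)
Your proposal is correct and follows essentially the same approach as the paper: both compute the singular values of $J$ by observing that $J$ is diagonal with respect to the orthonormal bases $\{e_n^{(r)}\}_{n\in\N_0}$ and $\{e_n^{(r')}\}_{n\in\N_0}$, obtaining $s_n(J)=(r'/r)^{n-1}$. The only cosmetic difference is that the paper phrases this via the matrix entries $(J^*Je_n^{(r)},e_m^{(r)})_{H^2(D_r)}=\delta_{nm}(r'/r)^{2n}$ and leaves the verification of the exponential-class estimate implicit, whereas you spell out the rewriting $q^{n-1}\leq c_1\exp(-c_2 n)$ explicitly.
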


\begin{proof} In order to see this note that 
\[
   (J^*Je_n^{(r)},e_m^{(r)})_{H^2(D_r)}
    =(Je_n^{(r)},Je_m^{(r)})_{H^2(D_{r'})}
    =\delta_{nm}\left ( \frac{r'}{r} \right )^{2n} 
\]
where $(e_n^{(r)})_{n\in N_0}$ denotes the orthonormal basis of
$H^2(D_r)$ given in Remark~\ref{rem:Basis}. 
Thus 
\[ s_n(J)=\left ( \frac{r'}{r} \right )^{n-1}\,, \]
and it follows that $J\in \mathcal E$.  
\end{proof}

\begin{rem}
\label{rem:Jexpoclass}
  A similar argument shows that for $0<R<R'$ the canonical embedding 
   $J:H^2_0(D_{R})\to H^2_0(D_{R'})$ is of exponential class.
\end{rem}

Other examples of naturally occurring operators of exponential class can be
found in \cite{bandtlow_chu, bj_advances, bj_etds}. In fact, as we
shall see shortly, the operator $\lt^\dagger_\tau$ is of exponential
class for every holomorphically expansive $\tau$, and, moreover, its
eigenvalue sequence decays at an exponential rate. The proof of these
results relies on the following properties of $\mathcal E$. 

\begin{propo}
\label{prop:expoclassprop}
The exponential class $\mathcal E $ is a two-sided operator ideal, that is, 
the following two properties hold: 
\begin{enumerate}
\item $L_1,L_2\in \mathcal{E}$ and $\alpha_1,\alpha_2\in \C$
  imply $\alpha_1L_1+\alpha_2L_2\in \mathcal{E}$, whenever this 
  linear combination is defined;
\item if $L_1,L_3$ are bounded operators and $L_2\in \mathcal{E}$
  then $L_1L_2L_3\in \mathcal{E}$, whenever this 
  product is defined. 
\end{enumerate}
Moreover, if $L\in \mathcal E$ is an endomorphism then 
\[ \abs{\lambda_n(L)}\leq c_1\exp(-c_2n) \quad (\forall n \in \N) \]
for some constants $c_1,c_2>0$. 
\end{propo}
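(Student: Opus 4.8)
The plan is to reduce both assertions to three classical facts about singular values of compact operators between Hilbert spaces: the composition estimates $s_n(AB)\le\norm{A}{}s_n(B)$ and $s_n(BA)\le s_n(B)\norm{A}{}$ for bounded $A$; the Ky Fan subadditivity $s_{m+n-1}(A+B)\le s_m(A)+s_n(B)$; and the multiplicative Weyl inequality $\prod_{k=1}^n\abs{\lambda_k(L)}\le\prod_{k=1}^n s_k(L)$, valid for any compact endomorphism $L$. All of these are standard (see, e.g., \cite{expoclass}), so the proof is really just bookkeeping of how the decay rate $c_2$ degrades under each operation.

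Property (2) is immediate: if $L_1,L_3$ are bounded and $s_n(L_2)\le c_1\exp(-c_2 n)$, then two applications of the composition estimates give $s_n(L_1L_2L_3)\le\norm{L_1}{}\,\norm{L_3}{}\,s_n(L_2)\le\norm{L_1}{}\,\norm{L_3}{}\,c_1\exp(-c_2 n)$, so $L_1L_2L_3\in\E$ with the \emph{same} rate $c_2$. For property (1), the scalar factors are harmless because $s_n(\alpha L)=\abs{\alpha}s_n(L)$, so it suffices to handle $L_1+L_2$ with $s_n(L_j)\le c_1^{(j)}\exp(-c_2^{(j)}n)$. Writing any index $p$ as $p=m+n-1$ with $m$ and $n$ both at least $p/2$ and applying subadditivity yields $s_p(L_1+L_2)\le s_m(L_1)+s_n(L_2)\le(c_1^{(1)}+c_1^{(2)})\exp(-(c_2/2)p)$ with $c_2=\min\{c_2^{(1)},c_2^{(2)}\}$; hence $L_1+L_2\in\E$, now with the rate $c_2$ halved.

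For the final claim, let $L\in\E$ be an endomorphism with $s_n(L)\le c_1\exp(-c_2 n)$. Since the eigenvalues are ordered by decreasing modulus, $\abs{\lambda_n(L)}^n\le\prod_{k=1}^n\abs{\lambda_k(L)}$, and the Weyl inequality bounds the right-hand side by $\prod_{k=1}^n s_k(L)\le c_1^n\exp(-c_2 n(n+1)/2)$. Taking $n$-th roots gives $\abs{\lambda_n(L)}\le c_1\exp(-c_2(n+1)/2)\le c_1\exp(-(c_2/2)n)$, which is the desired bound. I do not expect any genuine obstacle: the only thing to watch is that addition and the passage from singular values to eigenvalues each cost a factor of two in the exponential rate, but since we only need \emph{some} positive constants $c_1,c_2$ this is immaterial; if sharper rates were wanted one could instead invoke the refined multiplicative inequalities for singular values.
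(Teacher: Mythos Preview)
Your argument is correct. The paper itself does not give a proof here but simply refers to \cite[Propositions~2.8 and 2.10]{expoclass}; your write-up essentially reconstructs the content of those propositions from the standard singular-value toolkit (the ideal inequalities $s_n(AB)\le\norm{A}{}s_n(B)$, $s_n(BA)\le s_n(B)\norm{A}{}$; Ky~Fan subadditivity $s_{m+n-1}(A+B)\le s_m(A)+s_n(B)$; and Weyl's multiplicative inequality $\prod_{k\le n}\abs{\lambda_k}\le\prod_{k\le n}s_k$). The bookkeeping is done correctly: for the sum you split the index $p$ as $p=m+n-1$ with $m,n\ge p/2$, which halves the exponential rate, and for the eigenvalue bound the geometric-mean trick via Weyl again halves the rate; since only the existence of \emph{some} positive $c_1,c_2$ is claimed, this loss is harmless. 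In short, there is no gap, and your proof is a faithful expansion of the citation the paper gives.
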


\begin{proof}
See \cite[Propositions 2.8 and 2.10]{expoclass}  
\end{proof}

% Before proving that $\lt^\dagger_\tau \in \mathcal{E}$ for any 
% holomorphically expansive $\tau$ it will be useful to group them into two classes. 
% 
% 
% \begin{defi}
 %  Let $\tau$ be holomorphically expansive on $A_{r,R}$. If 
% \[ \tau(\T_r) \subset D_r \text{ and }   
 %   \tau(\T_R)\subset D_R^\infty \]  
% we shall say that $\tau$ is \emph{orientation preserving}, while if 
% \[ \tau(\T_r) \subset D_R^\infty \text{ and }   
 %   \tau(\T_R)\subset D_r \]  
% we shall say that $\tau$ is \emph{orientation reversing}. 
% \end{defi}

We shall now show that the entries of the matrix defining $\lt^\dagger_\tau$ 
in (\ref{eq:lprep:orpr}) and (\ref{eq:lprep:orre}) 
are well-defined and are each of exponential class for 
any holomorphically expansive  
$\tau$. 

\begin{propo}
\label{prop:Cexpoclass}
  Let $\tau$ be holomorphically expansive on $A_{r,R}$.
  \begin{enumerate}
  \item \label{enum:Cexpoclass:a}
  If $\tau$ is orientation preserving then 
  \[ C_\tau(H^2(D_r))\subset L^2(\T_r) \text{ and } 
     C_\tau(H^2(D_R^\infty))\subset L^2(\T_R)\,. \] 
  \item \label{enum:Cexpoclass:b}
  If $\tau$ is orientation reversing then 
  \[ C_\tau(H^2(D_r))\subset L^2(\T_R) \text{ and } 
     C_\tau(H^2(D_R^\infty))\subset L^2(\T_r)\,. \] 
  \end{enumerate}
  Moreover in both cases, the restrictions 
  $C_\tau|H^2(D_r)$ and $C_\tau|H^2(D_R^\infty)$ are of exponential class. 
\end{propo}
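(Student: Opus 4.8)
The plan is to prove the two inclusions first and then deduce the exponential-class statement by factoring each composition operator through an embedding between Hardy spaces on nested disks (which we already know is of exponential class by Lemma~\ref{lem:Jexpoclass} and Remark~\ref{rem:Jexpoclass}). I will treat the orientation-preserving case; the orientation-reversing case is identical after swapping the roles of $r$ and $R$. Recall that by alternative (A1), holomorphic expansiveness means $\tau(\mathbb{T}_r)\subset D_r$ and $\tau(\mathbb{T}_R)\subset D_R^\infty$. Since $\tau$ is holomorphic on the \emph{closed} annulus $\cl{A_{r,R}}$ and $\tau(\mathbb{T}_r)\subset D_r$ is a compact subset of the open disk $D_r$, there is some $r'$ with $r'<r$ such that $\tau(\mathbb{T}_r)\subset D_{r'}$; similarly there is $R'>R$ with $\tau(\mathbb{T}_R)\subset D_{R'}^\infty$. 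Moreover, by continuity and compactness, $\tau$ maps a whole closed sub-annulus $\cl{A_{r-\delta,r}}$ into $D_{r'}$ for some small $\delta>0$, and likewise a closed sub-annulus adjacent to $\mathbb{T}_R$ into $D_{R'}^\infty$.

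Now take $f\in H^2(D_r)$. Then $f\in \operatorname{Hol}(D_r)$, so $f$ is in particular holomorphic on a neighbourhood of $D_{r'}$; hence $C_\tau f = f\circ\tau$ is holomorphic on a neighbourhood of $\mathbb{T}_r$, so its boundary trace on $\mathbb{T}_r$ is a well-defined $L^2(\mathbb{T}_r)$ function — indeed it is continuous — giving $C_\tau(H^2(D_r))\subset L^2(\mathbb{T}_r)$. To get the exponential-class conclusion I will factor: define $\tilde C_\tau\colon H^2(D_{r'})\to L^2(\mathbb{T}_r)$ by $g\mapsto (g\circ\tau)|_{\mathbb{T}_r}$, which is bounded because $\tau(\mathbb{T}_r)$ is a compact subset of $D_{r'}$ and evaluation of an $H^2(D_{r'})$ function at points of that compact set is uniformly bounded by the $H^2(D_{r'})$-norm (Lemma~\ref{lem:pevalcont}), so $\sup_{z\in\mathbb{T}_r}|g(\tau(z))|\leq C\norm{g}{H^2(D_{r'})}$ and therefore $\norm{\tilde C_\tau g}{L^2(\mathbb{T}_r)}\leq C\norm{g}{H^2(D_{r'})}$. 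Then $C_\tau|H^2(D_r) = \tilde C_\tau\circ J$, where $J\colon H^2(D_r)\to H^2(D_{r'})$ is the canonical embedding, which is of exponential class by Lemma~\ref{lem:Jexpoclass} since $r'<r$. By the ideal property in Proposition~\ref{prop:expoclassprop}(2), the composition $\tilde C_\tau\circ J$ is of exponential class. The same argument, using $H^2_0(D_R^\infty)$, the point-evaluation bound \eqref{eq:pevalDRinf}, the embedding $H^2_0(D_R^\infty)\to H^2_0(D_{R'}^\infty)$ of exponential class (Remark~\ref{rem:Jexpoclass}), and the fact that $\tau(\mathbb{T}_R)\subset D_{R'}^\infty$, handles $C_\tau|H^2(D_R^\infty)$.

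The main obstacle — really the only point requiring care — is justifying that the composed function has the claimed boundary regularity and that the map into $L^2$ of the boundary circle is genuinely bounded, rather than merely defined. The key is that holomorphic expansiveness forces $\tau(\mathbb{T}_r)$ to be a compact subset of the \emph{open} disk $D_r$ (not merely $\overline{D_r}$), which creates the gap $r'<r$; this is where the slack needed to insert the exponential-class embedding comes from, and it is the reason the hypothesis is $\tau(A_{r,R})\supset\cl{A_{r,R}}$ with \emph{closed} closure. I would also remark that one does not even need to discuss boundary traces directly: it suffices to observe that $f\circ\tau$ extends holomorphically to a neighbourhood of $\mathbb{T}_r$, so its restriction to $\mathbb{T}_r$ lies in $L^2(\mathbb{T}_r)$ automatically, and the norm estimate above controls this restriction. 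This completes the proof in the orientation-preserving case; the orientation-reversing case follows by the same reasoning with (A2) in place of (A1), interchanging $r\leftrightarrow R$ and $D_r\leftrightarrow D_R^\infty$ throughout.
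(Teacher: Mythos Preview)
Your proposal is correct and follows essentially the same approach as the paper: choose an intermediate radius so that $\tau$ maps the relevant boundary circle into a strictly smaller (resp.\ larger) disk, use Lemma~\ref{lem:pevalcont} to show the composition operator is bounded from the Hardy space on that intermediate disk into $L^2$ of the boundary circle, and then factor through the canonical embedding of Lemma~\ref{lem:Jexpoclass}/Remark~\ref{rem:Jexpoclass} and invoke the ideal property of Proposition~\ref{prop:expoclassprop}. The only cosmetic difference is that the paper writes out the orientation-reversing case explicitly while you write out the orientation-preserving one; your aside about $\tau$ mapping a full sub-annulus into $D_{r'}$ is not needed for the argument but is harmless.
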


\begin{proof}
We shall only prove case (\ref{enum:Cexpoclass:b}); 
the other one is similar. We start
by observing that, since
$\tau$ is orientation reversing, we can choose $0<r'<r$ such that 
\[ \tau(\T_R)\subset D_{r'}\subset D_r\,. \]
We shall now show that $C_\tau$ maps $H^2(D_{r^\prime})$ continuously to
$L^2(\T_R)$. In order to see this note that $\tau(\T_R)$ is a compact
subset of $D_{r'}$, so 
\[ C:=\sup_{z\in \T_R} \frac{r'}{\sqrt{(r')^2-\abs{\tau(z)}^2}}<\infty\,. \] 
Thus, using Lemma~\ref{lem:pevalcont}, we have for any $f\in H^2(D_{r^\prime})$ 
\[ \sup_{z\in \T_R} \abs{f(\tau(z))}\leq
    C \norm{f}{H^2(D_{r^\prime})}\,, \]
and so 
\begin{equation}
\label{eq:Ccont}
\norm{C_\tau f}{L^2(\T_R)} \leq 
    C \norm{f}{H^2(D_{r^\prime})} \quad (\forall f \in
    H^2(D_{r^\prime})) \,.
\end{equation}
We now observe that $C_\tau|H^2(D_r)$ admits a factorisation of the
form 
\[ C_\tau|H^2(D_r)=C_\tau|H^2(D_{r^\prime})J\,, \]
where $J$ denotes the canonical embedding of $H^2(D_r)$ in
$H^2(D_{r^\prime})$. Thus, since $C_\tau:H^2(D_{r^\prime})\to
L^2(\T_R)$ is continuous by (\ref{eq:Ccont}) it follows that 
$C_\tau$ maps $H^2(D_r)$ continuously to
$L^2(\T_R)$. Moreover, since $J$ is of exponential class by
Lemma~\ref{lem:Jexpoclass}, Proposition~\ref{prop:expoclassprop} now implies that 
$C_\tau|H^2(D_r)$ is of exponential class as well. 

A similar argument using Remark~\ref{rem:Jexpoclass} instead of 
Lemma~\ref{lem:Jexpoclass} shows that
$C_\tau$ maps $H^2(D_R^\infty)$ continuously to $L^2(\T_r)$ and that 
$C_\tau|H^2(D_R^\infty)$ is of exponential class.  
\end{proof}

\begin{cor}
\label{co:Cexpoclass}
If $\tau$ is holomorphically expansive then 
$\lt_\tau^\dagger$ 
is of exponential class and, in particular, its eigenvalue sequence decays
exponentially.   
\end{cor}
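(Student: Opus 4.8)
The plan is to deduce Corollary~\ref{co:Cexpoclass} directly from the matrix representation of $\lt_\tau^\dagger$ in Proposition~\ref{prop:adj} together with the ideal properties of $\mathcal{E}$ collected in Proposition~\ref{prop:expoclassprop} and the exponential-class statements for the composition operators established in Proposition~\ref{prop:Cexpoclass}. First I would observe that, by Proposition~\ref{prop:adj}, the operator $\lt_\tau^\dagger$ acting on $\monster$ is given (up to the overall sign $\omega(\tau)$, which is irrelevant for the exponential-class property) by a $2\times 2$ block matrix whose four entries are of the form $\Pi^{(\rho_1)}_{\pm} C_\tau$, with $\rho_1\in\{r,R\}$, acting on either $H^2(D_r)$ or $H^2(D_R^\infty)$, the precise assignment depending on whether $\tau$ is orientation preserving or reversing.

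Next I would argue that each such block entry is of exponential class. Indeed, Proposition~\ref{prop:Cexpoclass} tells us that the restrictions $C_\tau|H^2(D_r)$ and $C_\tau|H^2(D_R^\infty)$, viewed as operators into the appropriate $L^2(\mathbb{T}_\rho)$, are of exponential class; and the projections $\Pi^{(\rho)}_{\pm}\colon L^2(\mathbb{T}_\rho)\to H^2(D_\rho)$ or $H^2_0(D^\infty_\rho)$ are bounded. Since $\mathcal{E}$ is a two-sided ideal (Proposition~\ref{prop:expoclassprop}~(2)), composing a bounded operator with an operator of exponential class yields an operator of exponential class, so each entry $\Pi^{(\rho_1)}_{\pm} C_\tau$ lies in $\mathcal{E}$.

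Then I would assemble the four blocks into the full operator on $\monster$. Each block, precomposed with the bounded coordinate inclusion and postcomposed with the bounded coordinate projection associated with the direct-sum decomposition $\monster=H^2(D_r)\oplus H^2_0(D^\infty_R)$, is again of exponential class by the ideal property; and $\lt_\tau^\dagger$ is the finite sum of these four operators, so it lies in $\mathcal{E}$ by Proposition~\ref{prop:expoclassprop}~(1). Finally, since $\lt_\tau^\dagger$ is an endomorphism of $\monster$, the last assertion of Proposition~\ref{prop:expoclassprop} gives $|\lambda_n(\lt_\tau^\dagger)|\leq c_1\exp(-c_2 n)$ for suitable $c_1,c_2>0$, which is exactly the claimed exponential decay of the eigenvalue sequence.

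There is essentially no serious obstacle here: the corollary is a bookkeeping consequence of the preceding results, and the only point requiring a little care is to make sure the block entries are genuinely composable as stated (i.e.\ that the target $L^2(\mathbb{T}_\rho)$ of $C_\tau$ matches the domain of the projection $\Pi^{(\rho)}_{\pm}$ appearing in the corresponding matrix entry, which is precisely what the orientation-dependent forms~(\ref{eq:lprep:orpr}) and~(\ref{eq:lprep:orre}) together with Proposition~\ref{prop:Cexpoclass}~(\ref{enum:Cexpoclass:a}) and~(\ref{enum:Cexpoclass:b}) guarantee), and to note that the overall scalar factor $\pm 1$ in the orientation-reversing case does not affect membership in $\mathcal{E}$ by Proposition~\ref{prop:expoclassprop}~(1). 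It may also be worth remarking explicitly that this in particular shows $\lt_\tau^\dagger$ is compact, which legitimises speaking of its eigenvalue sequence.
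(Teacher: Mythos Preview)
Your argument is correct and is exactly the route the paper takes: its own proof consists of the single line ``Follows from Propositions~\ref{prop:expoclassprop} and~\ref{prop:Cexpoclass}'', and you have simply spelled this out. One small point of citation: for a general holomorphically expansive $\tau$ the block-matrix formula is taken as the \emph{definition} of $\lt_\tau^\dagger$ (see the paragraph immediately following Proposition~\ref{prop:adj}), rather than a consequence of Proposition~\ref{prop:adj} itself, which is stated only for genuine circle maps---but this does not affect the substance of your proof.
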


\begin{proof}
  Follows from Propositions~\ref{prop:expoclassprop} and \ref{prop:Cexpoclass}
\end{proof}
\end{section}

\begin{rem}
In the corollary above and in the following, we shall always tacitly assume that if 
$\tau$ is holomorphically expansive on $A_{r,R}$, then $\lt_\tau^\dagger$ will be 
considered as an operator from $\monster$ to $\monster$. 
\end{rem}

We finish this section with a result that will allow us to calculate the eigenvalue 
sequence of a particular class of analytic expansive circle maps.

\begin{propo}
\label{prop:ldtrace}
  Let $\tau$ be holomorphically expansive on $A_{r,R}$. Then
  $\lt_\tau^\dagger$ is trace class and its trace is given by 
\[ \Tr(\lt_\tau^\dagger) = \frac{\omega(\tau)}{2\pi i} \int_{{\partial_+} A_{r,R}}
\frac{1}{\tau(z)-z}\,dz\,. \] 
% where ${\partial_+} A_{r,R}$ denotes the positively oriented boundary of
% $A_{r,R}$. 
\end{propo}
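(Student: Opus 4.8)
The plan is to compute the trace of $\lt_\tau^\dagger$ by working in the orthonormal bases of the Hardy spaces from Remark~\ref{rem:Basis} and then recognizing the resulting sum as a contour integral via the residue theorem. First I would reduce to the orientation preserving case; the orientation reversing case is handled by the same computation together with the overall sign $\omega(\tau)=-1$ and the swapped roles of $r$ and $R$ in \eqref{eq:lprep:orre}, so the formula with the factor $\omega(\tau)$ covers both. In the orientation preserving case, $\lt_\tau^\dagger$ acts on $H^2(D_r)\oplus H^2_0(D^\infty_R)$ as the matrix \eqref{eq:lprep:orpr}, and its trace is the sum of the traces of the two diagonal blocks $\Pi^{(r)}_+ C_\tau$ on $H^2(D_r)$ and $\Pi^{(R)}_- C_\tau$ on $H^2_0(D^\infty_R)$ (each of which is trace class, being of exponential class by Proposition~\ref{prop:Cexpoclass} and Proposition~\ref{prop:expoclassprop}, so the trace is the sum of diagonal matrix entries in any orthonormal basis).

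For the first block, using the basis $\{e_n^{(r)}\}_{n\in\N_0}$ with $e_n^{(r)}(z)=z^n/r^n$, the $n$-th diagonal entry is $(\Pi^{(r)}_+ C_\tau e_n^{(r)}, e_n^{(r)})_{H^2(D_r)}$. Since $\Pi^{(r)}_+$ is the orthogonal projection onto the nonnegative Fourier modes and $\{e_n^{(r)}\}$ is orthonormal, this equals the $n$-th Fourier coefficient of $\tau(z)^n/r^n$ restored against $z^n/r^n$, which I would express as a Cauchy integral over $\partial_+ D_r$:
\[
(\Pi^{(r)}_+ C_\tau e_n^{(r)}, e_n^{(r)})_{H^2(D_r)}
= \frac{1}{2\pi i}\int_{\partial_+ D_r} \frac{\tau(z)^n}{z^{n+1}}\, dz\,.
\]
Summing over $n\ge 0$ and interchanging sum and integral (justified since $\abs{\tau(z)/z}<1$ on $\T_r$ because $\tau(\T_r)\subset D_r$, giving a uniformly convergent geometric series) yields
\[
\Tr\bigl(\Pi^{(r)}_+ C_\tau\bigr)
= \frac{1}{2\pi i}\int_{\partial_+ D_r} \frac{1}{z}\cdot\frac{1}{1-\tau(z)/z}\, dz
= \frac{1}{2\pi i}\int_{\partial_+ D_r} \frac{dz}{z-\tau(z)}\,.
\]
An analogous computation for the second block, using the basis $\{e_{-n}^{(R)}\}_{n\in\N}$ of $H^2_0(D^\infty_R)$ and the fact that $\tau(\T_R)\subset D^\infty_R$ (so $\abs{z/\tau(z)}<1$ on $\T_R$), gives a geometric series in $z/\tau(z)$ and produces
\[
\Tr\bigl(\Pi^{(R)}_- C_\tau\bigr)
= -\frac{1}{2\pi i}\int_{\partial_+ D_R} \frac{dz}{z-\tau(z)}
= \frac{1}{2\pi i}\int_{\partial_+ D_R} \frac{dz}{\tau(z)-z}\,;
\]
here I would be careful with the orientation of $\partial_+ D_R$ and with which of $1-z/\tau(z)$ versus $1-\tau(z)/z$ appears, as the sign bookkeeping is the most error-prone part. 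Adding the two traces and observing that $\partial_+ A_{r,R}$ consists of $\partial_+ D_R$ together with $\mathbb{T}_r$ traversed in the negative direction (i.e. $-\partial_+ D_r$), the two contributions combine into $\frac{1}{2\pi i}\int_{\partial_+ A_{r,R}} \frac{dz}{\tau(z)-z}$, which is the claimed formula with $\omega(\tau)=+1$.

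The main obstacle I anticipate is not any single deep step but the careful justification of the interchange of summation and integration together with the orientation and sign conventions: one must verify that on $\mathbb{T}_r$ the ratio $\tau(z)/z$ is bounded away from modulus $1$ uniformly (which follows from $\tau(\mathbb{T}_r)\subset D_r$ being compact in the open disk, hence $\sup_{z\in\T_r}\abs{\tau(z)}<r$), and similarly $\sup_{z\in\T_R}\abs{\tau(z)}/\abs{z}$ needs $\abs{\tau(z)}$ bounded below on $\T_R$, which holds since $\tau(\T_R)\subset D^\infty_R$ is compact in $\hat\C$ away from $0$ — note $\tau$ does not vanish near $\T_R$. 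Once these uniform bounds are in place the geometric series converge uniformly, Fubini applies, and the residue computation is routine. A final remark: the identity can alternatively be phrased as $\Tr(\lt_\tau^\dagger)$ counting, with signs, the fixed points of $\tau$ inside and outside the annulus via the argument principle applied to $\tau(z)-z$, but the direct Fourier-coefficient computation above is the cleanest route and is what I would write up.
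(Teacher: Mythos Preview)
Your proposal is correct and follows essentially the same route as the paper's proof: compute the diagonal entries of the two diagonal blocks in the monomial bases, recognise each sum as a geometric series in $\tau(z)/z$ (respectively $z/\tau(z)$), and combine the two contour integrals over $\T_r$ and $\T_R$ into a single integral over $\partial_+A_{r,R}$. The only cosmetic difference is that the paper first invokes cyclicity of the trace to pass from $\Tr(\Pi^{(\rho)}_\pm C_\tau)$ to $\Tr(C_\tau\Pi^{(\rho)}_\pm)$ and then computes in the full $L^2(\T_\rho)$ basis, whereas you compute the diagonal entries directly in the Hardy-space bases; the resulting integrals are identical, and your added remarks on uniform convergence of the geometric series and on the orientation bookkeeping are exactly the justifications the paper leaves implicit.
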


\begin{proof}
Clearly, $\lt_\tau^\dagger$ is trace class, since by 
Corollary~\ref{co:Cexpoclass} it is of exponential class. In order to
calculate its trace we observe that if $\rho\in [r,R]$ and $n\in \Z$ we have 
\begin{equation}
\label{eq:cee}
  (C_\tau e_n^{(\rho)}, e_n^{(\rho)})_{L^2(\T_\rho)}
     = \frac{1}{2\pi} \int_0^{2\pi} 
          \frac{\tau(\rho e^{i\theta})^n}{\rho^n}
                e^{-in\theta}\, d\theta 
     =\frac{1}{2\pi i} \int_{{\partial_+} D_\rho} 
          \frac{\tau(z)^n}{z^{n+1}}\, dz\,. 
\end{equation}
Suppose now that $\tau$ is orientation preserving. Then, using the
cyclicity of the trace, we have  
\[
  \Tr(\lt_\tau^\dagger)=
   \Tr(\Pi_+^{(r)}C_\tau)+\Tr(\Pi_-^{(R)}C_\tau)=
   \Tr(C_\tau \Pi_+^{(r)})+\Tr(C_\tau \Pi_-^{(R)})\,.
\]
But by (\ref{eq:cee})  
\begin{multline*}
   \Tr(C_\tau \Pi_+^{(r)})
    =\sum_{n=-\infty}^\infty (C_\tau \Pi_+^{(r)}e_n^{(r)},
          e_n^{(r)})_{L^2(\T_r)}= \\
    =\sum_{n=0}^\infty \frac{1}{2\pi i} \int_{{\partial_+} D_r} 
          \frac{\tau(z)^n}{z^{n+1}}\, dz
    = \frac{1}{2\pi i} \int_{{\partial_+} D_r} 
          \frac{1}{z-\tau(z)}\, dz\,,
\end{multline*}
and 
\begin{multline*}
   \Tr(C_\tau \Pi_-^{(R)})
     =\sum_{n=-\infty}^\infty (C_\tau \Pi_-^{(R)}e_n^{(R)},
          e_n^{(R)})_{L^2(\T_R)}= \\
    =\sum_{n=1}^\infty \frac{1}{2\pi i} \int_{{\partial_+} D_R} 
          \frac{z^{n-1}}{\tau(z)^{n}}\, dz 
    =\frac{1}{2\pi i} \int_{{\partial_+} D_R^\infty} 
          \frac{1}{z-\tau(z)}\, dz\,,
\end{multline*}
and the assertion follows by observing that 
$\partial_+A_{r,R}=\partial_+D_r^\infty \cup \partial_+D_R$. 
The proof for orientation reversing $\tau$
is similar. 
\end{proof}

\section{Blaschke and anti-Blaschke products}
\label{sec:Blaschke}
In this section we shall consider a particular class of analytic circle maps, for 
which the eigenvalue sequence of the associated transfer operators can be calculated 
exactly. 

\begin{defi}
For $d\in \N$ let $a=(\alpha, a_1,\ldots,a_d)$ be a $(d+1)$-tuple of 
complex numbers with $\alpha \in \T$ and $a_1,\ldots, a_d \in
\mathbb{D}$. Then  
\[
  B_a(z)= \alpha\prod_{j=1}^{d} \frac{z-a_j}{1-\overline{a_j}z}
\]
is called a \emph{Blaschke product of degree $d$} or a 
\emph{finite
Blaschke product}. 
\end{defi}

We shall now collect a number of facts about Blaschke products. 

\begin{propo} 
\label{prop:blbasic} 
  Let $B_a$ be a finite Blaschke product. Then the
  following holds. 
\begin{enumerate}
 \item \label{eq:bl1}
  $B_a$ is meromorphic on $\hat{\mathbb{C}}$ and holomorphic 
     on $\cl{\mathbb D}$. 
 \item \label{eq:bl2}
 $B_a$ leaves both $\mathbb{T}$ and $\mathbb{D}$ invariant.  
 \item \label{eq:bl3}
       We have $B_a(z^{-1})=B_{\overline{a}}(z)^{-1}$, where 
       $\overline{a}=(\overline{\alpha}, \overline{a_1},\ldots, \overline{a_d})$.  
 \item \label{eq:bl3.5}
   $B_a$ is analytic expanding if and only if it is holomorphically
   expansive. 
 \item \label{eq:bl4}
  If $\sum_{j=1}^{d}(1-|a_j|)/(1+|a_j|)>1$ then $B_a$ is 
  holomorphically expansive. 
 \item \label{eq:bl5}
   If $B_a$ is holomorphically expansive, then $B_a$ has a unique
   fixed point $z_0$ in $\mathbb{D}$ and the corresponding multiplier 
  $B_a'(z_0)$ belongs to $\mathbb D$.  
\end{enumerate}
\end{propo}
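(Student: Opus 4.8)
The plan is to prove the six assertions of Proposition~\ref{prop:blbasic} more or less in the order listed, since each builds on the previous ones, and the only genuinely substantial point is part~(\ref{eq:bl3.5}).

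\textbf{Parts (\ref{eq:bl1})--(\ref{eq:bl3}).} These are classical. Part~(\ref{eq:bl1}) is immediate from the formula: each factor $(z-a_j)/(1-\overline{a_j}z)$ is a Möbius transformation with pole at $1/\overline{a_j}$, which lies outside $\cl{\mathbb D}$ since $a_j\in\mathbb D$; hence $B_a$ is rational, in particular meromorphic on $\RS$, and holomorphic on $\cl{\mathbb D}$. For part~(\ref{eq:bl2}), a direct computation shows $|{(z-a_j)/(1-\overline{a_j}z)}|=1$ for $|z|=1$, so $B_a(\T)\subset\T$, and since $B_a$ is a proper holomorphic self-map of $\mathbb D$ of degree $d$ (each factor maps $\mathbb D$ into $\mathbb D$ by the Schwarz--Pick lemma, or again by a direct modulus estimate) we get $B_a(\mathbb D)=\mathbb D$. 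Part~(\ref{eq:bl3}) is the functional identity: replacing $z$ by $z^{-1}$ in a single factor and clearing denominators gives $\frac{z^{-1}-a_j}{1-\overline{a_j}z^{-1}}=\frac{1-a_jz}{z-\overline{a_j}\cdot\overline{\overline{a_j}}}$; more cleanly, one checks $B_a(z^{-1})B_{\overline a}(z)=1$ factor by factor (the $\alpha$ in $B_a$ pairs with $\overline\alpha$ in $B_{\overline a}$ to give $|\alpha|^2=1$), so $B_a(z^{-1})=B_{\overline a}(z)^{-1}$.

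\textbf{Part (\ref{eq:bl3.5}).} This is the heart of the proposition. One direction is the remark already in the paper, valid for any holomorphic map: an analytic expanding circle map is holomorphically expansive on all sufficiently small annuli about $\T$ (this is the cited \cite[Lemma~2.2]{SBJ_nonlinearity}, also a special case of Lemma~\ref{lem:analyticdef}). For the converse I would argue: suppose $B_a$ is holomorphically expansive on some $A_{r,R}$. By part~(\ref{eq:bl2}), $B_a$ leaves $\T$ invariant, so it restricts to a $C^\omega$ covering map $\T\to\T$ of degree $\pm d$, and it suffices to show $\inf_{z\in\T}|B_a'(z)|>1$. The key is that the finite Blaschke product $B_a$ restricted to $\T$ has a well-understood derivative: writing $z=e^{i\theta}$ and using $\log|1-\overline{a_j}z|$ type computations, one obtains the classical formula $\frac{z B_a'(z)}{B_a(z)}=\sum_{j=1}^d\frac{1-|a_j|^2}{|z-a_j|^2}>0$ for $|z|=1$, so that $|B_a'(z)|=\sum_{j=1}^d\frac{1-|a_j|^2}{|z-a_j|^2}$ there. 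Now if $|B_a'(z_*)|\le 1$ at some $z_*\in\T$, I want to contradict holomorphic expansiveness: the idea is that at such a point the image $B_a(\T_{R})$ (or $B_a(\T_r)$) fails to wrap around $\T$ with the required room, i.e.\ one of the inclusions (A1)/(A2) in the Remark is violated for every $r<1<R$. Concretely, because $B_a$ is holomorphic across $\T$ and $|B_a'|$ is continuous, if $\min_{\T}|B_a'|=m\le 1$ then for $\rho$ near $1$ the circle $B_a(\T_\rho)$ stays within distance $O(m|\rho-1|)$ of $\T$, hence cannot be contained in $D_r\cup D_R^\infty$ once $r,R$ are chosen (which we may, since expansiveness only asserts existence of \emph{some} $A_{r,R}$, but a scaling/normal-families argument shrinks it); more precisely one shows that holomorphic expansiveness on $A_{r,R}$ forces $|B_a'(z)|\ge \sqrt{R/r}>1$ uniformly on $\T$, say by applying Lemma~\ref{lem:pevalcont}-style estimates or the Schwarz lemma to the inverse branches. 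The cleanest route is probably: expansiveness implies each inverse branch $\phi_k$ maps $\cl{A_{r,R}}$ strictly inside $A_{r,R}$, so by the Schwarz--Pick lemma for the annulus (or just for disks after passing to universal covers) $\phi_k$ is a strict contraction in the appropriate metric, giving $|\phi_k'|<1$ on $\T$, i.e.\ $|B_a'|>1$ on $\T$. I expect the careful bookkeeping here — passing between "expansive on some annulus" and a uniform derivative bound on $\T$ — to be the main obstacle.

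\textbf{Parts (\ref{eq:bl4})--(\ref{eq:bl5}).} For~(\ref{eq:bl4}), using the formula $|B_a'(z)|=\sum_{j=1}^d\frac{1-|a_j|^2}{|z-a_j|^2}$ on $\T$ and the elementary bound $|z-a_j|\le 1+|a_j|$, we get $|B_a'(z)|\ge\sum_{j=1}^d\frac{1-|a_j|^2}{(1+|a_j|)^2}=\sum_{j=1}^d\frac{1-|a_j|}{1+|a_j|}>1$ by hypothesis; hence $B_a$ is analytic expanding, and by part~(\ref{eq:bl3.5}) it is holomorphically expansive. For~(\ref{eq:bl5}), if $B_a$ is holomorphically expansive it is in particular analytic expanding with $B_a(\mathbb D)=\mathbb D$ and $B_a(\T)=\T$, so $B_a\colon\cl{\mathbb D}\to\cl{\mathbb D}$ is a proper self-map that is expanding on $\T$; the map has no fixed point on $\T$ (there $|B_a'|>1$ would make a fixed point repelling, but one must rule out existence — easier is to note expansiveness gives $B_a(\T_\rho)$ strictly outside $\T$ for $\rho$ slightly above $1$, and a degree/rotation-number argument, or the contraction property of inverse branches, shows the unique attracting fixed point lies in the open disk). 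Then the unique inverse branch fixing a point of $\mathbb D$: since the inverse branches contract a hyperbolic-type metric on $A_{r,R}$ (from part~(\ref{eq:bl3.5})'s argument), exactly one of them has a fixed point $z_0$, it lies in $\mathbb D$, and $|\phi_k'(z_0)|<1$, i.e.\ $|B_a'(z_0)|>1$ — wait, we want $B_a'(z_0)\in\mathbb D$, so instead: $z_0$ is the attracting fixed point of $B_a$ in $\mathbb D$ (existence and uniqueness by the Denjoy--Wolff theorem applied to the holomorphic self-map $B_a$ of $\mathbb D$, which is not an automorphism since $d\ge 2$ when expansive, the case $d=1$ being excluded as $z\mapsto\alpha\frac{z-a_1}{1-\overline{a_1}z}$ is never expansive), and Denjoy--Wolff gives precisely $B_a'(z_0)\in\mathbb D$ (in fact in $[0,1)$ up to conjugacy, but $|B_a'(z_0)|<1$ is what is claimed). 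I would phrase~(\ref{eq:bl5}) via Denjoy--Wolff plus the observation that expansiveness on $\T$ rules out the fixed point being on the boundary.
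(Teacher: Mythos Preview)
The paper's own proof of this proposition consists almost entirely of citations: parts (\ref{eq:bl1})--(\ref{eq:bl3}) are declared clear or a short calculation, part (\ref{eq:bl3.5}) is referred to \cite[Theorem~1]{tischler1999}, part (\ref{eq:bl4}) to \cite[Corollary to Proposition~1]{martin1983}, and part (\ref{eq:bl5}) to \cite{pujals2006} combined with \cite{tischler1999}. Your proposal instead supplies direct arguments, which is a genuinely different and more self-contained route. Your treatments of (\ref{eq:bl1})--(\ref{eq:bl3}) and (\ref{eq:bl4}) (via the explicit formula $|B_a'(z)|=\sum_j (1-|a_j|^2)/|z-a_j|^2$ on $\T$) are correct, and your Denjoy--Wolff argument for (\ref{eq:bl5}) is sound once (\ref{eq:bl3.5}) is available to rule out a boundary Denjoy--Wolff point.

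The genuine gap is in the converse direction of (\ref{eq:bl3.5}), which you yourself flag as the main obstacle. Your sketch ``each inverse branch $\phi_k$ maps $\cl{A_{r,R}}$ strictly inside $A_{r,R}$, so by Schwarz--Pick $\phi_k$ contracts'' does not work as written: for $d\ge 2$ the map $B_a$ restricted to the annulus is a degree-$d$ covering, so no global single-valued inverse branches on $A_{r,R}$ exist, and Schwarz--Pick cannot be applied to them. The alternative ``scaling/normal-families argument shrinks it'' is not explained, and the claim that expansiveness on $A_{r,R}$ forces $|B_a'|\ge \sqrt{R/r}$ on $\T$ is simply asserted. A correct version of the hyperbolic-metric idea runs as follows: the component $A'$ of $B_a^{-1}(A_{r,R})$ containing $\T$ is an annulus with $\cl{A'}\subset A_{r,R}$, and $B_a:A'\to A_{r,R}$ is an unbranched covering, hence a local isometry for the respective hyperbolic metrics; strict domain monotonicity gives $\rho_{A'}(z)>\rho_{A_{r,R}}(z)$ on $A'$, and since both hyperbolic densities are constant along $\T$ (by rotational symmetry) one obtains $|B_a'(z)|=\rho_{A'}(z)/\rho_{A_{r,R}}(B_a(z))>1$ uniformly on $\T$. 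This is presumably what you are gesturing at with ``Schwarz--Pick for the annulus,'' but the covering-versus-inverse-branch distinction is essential and missing. Note also that your argument for (\ref{eq:bl5}) relies on (\ref{eq:bl3.5}) to exclude a boundary fixed point, so the gap propagates.
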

\begin{proof} Part (\ref{eq:bl1}) is clear, while (\ref{eq:bl2}) and (\ref{eq:bl3})
follow from a short calculation. For (\ref{eq:bl3.5}) see
\cite[Theorem~1]{tischler1999} and for 
(\ref{eq:bl4}) see \cite[Corollary to Proposition~1]{martin1983}. Finally, (\ref{eq:bl5}) 
follows by 
combining  \cite[Proposition~2.1]{pujals2006} and \cite[Theorem~1]{tischler1999}.
\end{proof}

Part (\ref{eq:bl1}) and  (\ref{eq:bl2}) of the above proposition show that a 
finite Blaschke product yields an analytic circle map. 
Curiously enough, 
any analytic circle map which is also holomorphic on $\cl{\mathbb{D}}$ is necessarily a finite 
Blaschke product (see, for example, \cite[Exercise
6.12]{Burckel}). In
particular, 
the composition of two finite Blaschke products is again a 
finite Blaschke product. 

It turns out that for expanding circle maps arising from Blaschke
products a complete determination of the spectra of the associated
transfer operators is possible. 
For certain Blaschke products of degree 2
this is shown in \cite{SBJ_nonlinearity} relying on a block-diagonal matrix
representation of the transfer operator. The general case is discussed
in \cite{BJS2} using the spectral theory of composition operators with
holomorphic symbols. Below we rederive this result by yet another
method, exploiting the fact that the trace of $\lt_{B_a}^\dagger$ is 
easily calculated whenever $B_a$ is a holomorphically expansive
Blaschke product.  
% In the past literature, we point out the work of Levin et al where an explicit formula for Fredholm determinants
% of certain transfer operators related to rational maps is proved 
% \cite{LevinSodin}.

\begin{lem}
\label{lem:blaschketrace}
Let $B_a$ be a finite Blaschke product which is holomorphically
expansive on $A_{r,R}$. Then 
\[ \Tr(\lt_{B_a}^\dagger)=1+\frac{\mu}{1-\mu} +
\frac{\overline{\mu}}{1-\overline{\mu}} \,, \]
where $\mu$ is the multiplier of the fixed point of $B_a$ in $\mathbb
D$. 
\end{lem}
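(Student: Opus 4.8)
The plan is to evaluate the trace from the contour formula of Proposition~\ref{prop:ldtrace} by the residue theorem, the only relevant residues coming from the unique fixed point $z_0$ of $B_a$ in $\mathbb{D}$ (of multiplier $\mu$) and from its reflection $1/\overline{z_0}$ in the unit circle. Since $B_a$ leaves $\mathbb{D}$ invariant (Proposition~\ref{prop:blbasic}(\ref{eq:bl2})), we have $B_a(\mathbb{T}_r)\subseteq\mathbb{D}$, which is disjoint from $D_R^\infty$; hence the holomorphically expansive map $B_a$ satisfies alternative (A1), so $\omega(B_a)=+1$. Arguing as in the proof of Proposition~\ref{prop:ldtrace} I would then write
\[
\Tr(\lt_{B_a}^\dagger)=\frac{1}{2\pi i}\int_{\partial_+ D_r}\frac{dz}{z-B_a(z)}+\frac{1}{2\pi i}\int_{\partial_+ D_R^\infty}\frac{dz}{z-B_a(z)}=:I_1+I_2.
\]

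The heart of the matter is a structural fact about the fixed points, obtained from the maximum principle. As $B_a$ is holomorphic on $\overline{\mathbb{D}}$ with $|B_a|<r$ on $\mathbb{T}_r$ (by (A1)), the maximum principle gives $B_a(\overline{D_r})\subseteq D_r$. Likewise $1/B_a$ is holomorphic on the closed disk $\overline{D_R^\infty}$ on the Riemann sphere — its poles, the zeros $a_1,\dots,a_d\in\mathbb{D}$ of $B_a$, do not lie in $\overline{D_R^\infty}$, and it takes the value $0$ at the poles of $B_a$ — and $|1/B_a|<1/R$ on $\mathbb{T}_R$ (again by (A1)), so the maximum principle gives $|B_a|>R$ on $\overline{D_R^\infty}$, i.e.\ $B_a(\overline{D_R^\infty})\subseteq D_R^\infty$. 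Using the symmetry $B_a(1/\overline{z})=1/\overline{B_a(z)}$ (a short computation, cf.\ Proposition~\ref{prop:blbasic}(\ref{eq:bl3})), the latter inclusion transforms into $B_a(\overline{D_{1/R}})\subseteq D_{1/R}$. Thus $B_a$ maps each of the closed disks $\overline{D_r}$ and $\overline{D_{1/R}}$ into its interior, hence has a fixed point in each; since such a fixed point lies in $\mathbb{D}$, it must equal $z_0$ (Proposition~\ref{prop:blbasic}(\ref{eq:bl5})). In particular $z_0\in D_r\cap D_{1/R}$, it is the unique fixed point of $B_a$ in each of these disks, and it is a simple zero of $z-B_a(z)$ because $B_a'(z_0)=\mu\neq1$.

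Now the residue theorem gives $I_1=\operatorname{Res}_{z=z_0}\bigl(z-B_a(z)\bigr)^{-1}=\tfrac{1}{1-\mu}$. For $I_2$, to handle the point at infinity I would substitute $z=1/w$ and use $B_a(1/w)=1/B_{\overline{a}}(w)$ (Proposition~\ref{prop:blbasic}(\ref{eq:bl3}), with $\overline{a}=(\overline{\alpha},\overline{a_1},\dots,\overline{a_d})$); collecting the residue $-1$ of $-dw/w$ at the origin, a short manipulation gives
\[
I_2=-1+\frac{1}{2\pi i}\int_{\partial_+ D_{1/R}}\frac{dw}{w-B_{\overline{a}}(w)}.
\]
Here $B_{\overline{a}}$ is again a finite Blaschke product, conjugate to $B_a$ by $z\mapsto\overline{z}$, holomorphic on $\overline{\mathbb{D}}\supseteq\overline{D_{1/R}}$; its unique fixed point in $\mathbb{D}$ is $\overline{z_0}$, with $B_{\overline{a}}'(\overline{z_0})=\overline{B_a'(z_0)}=\overline{\mu}$, and since $z_0\in D_{1/R}$ also $\overline{z_0}\in D_{1/R}$. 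Hence $\frac{1}{2\pi i}\int_{\partial_+ D_{1/R}}\frac{dw}{w-B_{\overline{a}}(w)}=\operatorname{Res}_{w=\overline{z_0}}\bigl(w-B_{\overline{a}}(w)\bigr)^{-1}=\tfrac{1}{1-\overline{\mu}}$, so $I_2=\tfrac{\overline{\mu}}{1-\overline{\mu}}$, and adding the two pieces yields $\Tr(\lt_{B_a}^\dagger)=\tfrac{1}{1-\mu}+\tfrac{\overline{\mu}}{1-\overline{\mu}}=1+\tfrac{\mu}{1-\mu}+\tfrac{\overline{\mu}}{1-\overline{\mu}}$.

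The step I expect to be the real obstacle is the structural fact that $z_0\in D_r\cap D_{1/R}$ — equivalently, that apart from the $d-1$ repelling fixed points on $\mathbb{T}$ the map $B_a$ has no fixed point in $\overline{A_{r,R}}$ — since this is exactly what forces the two residues to equal $1/(1-\mu)$ and $\overline{\mu}/(1-\overline{\mu})$ and makes the answer independent of $(r,R)$; the maximum-principle arguments for $B_a$ and for $1/B_a$, together with the $z\mapsto1/\overline{z}$ symmetry of Blaschke products and careful bookkeeping of orientations and of the residue at infinity (via the substitution $z=1/w$), are what dispatch it.
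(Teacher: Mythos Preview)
Your proof is correct and follows essentially the same route as the paper: apply Proposition~\ref{prop:ldtrace}, evaluate the $\partial_+D_r$ integral as $1/(1-\mu)$ via the residue at $z_0$, and handle the $\partial_+D_R^\infty$ integral by the substitution $z=1/w$ together with $B_a(z^{-1})=B_{\overline a}(z)^{-1}$, picking up $-1$ from $dw/w$ and $1/(1-\overline\mu)$ from the residue at $\overline{z_0}$. The only difference is that you supply a maximum-principle argument (plus the reflection symmetry $B_a(1/\overline z)=1/\overline{B_a(z)}$) to justify $z_0\in D_r\cap D_{1/R}$, whereas the paper simply asserts $z_0\in D_r$ and that $\overline{z_0}\in D_{R^{-1}}$; this extra care is a nice addition but not a different strategy.
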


\begin{proof} Let $z_0$ denote the fixed point of $B_a$ in $\mathbb D$. 
 Since $B_a$ is holomorphically expansive on $A_{r,R}$ we must have $z_0\in D_r$. Thus 
\[
\frac{1}{2\pi i} \int_{{\partial_+} D_r} \frac{1}{z-B_a(z)}\,dz=\frac{1}{1-B'_a(z_0)}\,.
\]
Furthermore, changing variables and using part 
(\ref{eq:bl3}) of Proposition~\ref{prop:blbasic} we have 
\begin{multline}
 \frac{1}{2\pi i} \int_{{\partial_+} D_R^\infty} \frac{1}{z-B_a(z)}\,dz
  =-\frac{1}{2\pi i} \int_{{\partial_+} D_{R^{-1}}}
        \frac{1}{z^{-1}-B_a(z^{-1})}\frac{1}{z^2}\,dz=\\
   =-\frac{1}{2\pi i} \int_{{\partial_+} D_{R^{-1}}}
     \frac{1}{z^{-1}-B_{\overline{a}}(z)^{-1}} \frac{1}{z^2}\,dz
   =\frac{1}{2\pi i} \int_{{\partial_+} D_{R^{-1}}} 
  \left ( \frac{1}{z-B_{\overline{a}}(z)}-\frac{1}{z} \right )\,dz\,.
\end{multline}
It is not difficult to see that the unique fixed point of 
$B_{\overline{a}}$ in the unit disk is $\overline{z_0}$ and that   
$\overline{z_0}\in D_{R^{-1}}$. Moreover,
it follows that $B'_{\overline{a}}(\overline{z_0})=\overline{B'_a(z_0)}$. 
Thus
\[ \frac{1}{2\pi i} \int_{{\partial_+} D_{R^{-1}}} 
  \left ( \frac{1}{z-B_{\overline{a}}(z)}-\frac{1}{z} \right )\,dz
   =\frac{1}{1-\overline{B'_a(z_0)}}-1\,,
\]
and the desired formula follows from Proposition~\ref{prop:ldtrace}. 
\end{proof}

Since the trace on a Hilbert space is spectral,
that is, it coincides with the sum of eigenvalues (see, for example, 
\cite[4.7.15]{pietsch}), the eigenvalues of a trace class operator $L$
are given by the reciprocals of the zeros of the corresponding
spectral determinant $z\mapsto \det(I-zL)$, an entire function given
by 
\[ \det(I-zL)=\exp \left ( - \sum_{n=1}^\infty \frac{z^n}{n}
  \Tr(L^n)\right )\]
for $z$ in a small neighbourhood of $0$ (see, for example, 
\cite[4.6.2]{pietsch}). We are now able to 
calculate the eigenvalue sequence of the transfer operator associated with a
holomorphically expansive Blaschke product. 

\begin{propo}
\label{product1}
  Let $B_a$ be a finite Blaschke product which is holomorphically
expansive on $A_{r,R}$. Then 
\[ \det(1-z\lt_{B_a}^\dagger)=(1-z)\prod_{k=1}^\infty (1-\mu^kz)(1-\overline{\mu}^kz)\,,\]
where, as before, $\mu$ is the multiplier of the fixed point of $B_a$ in the unit disk. 
In particular, the eigenvalue sequence of $\lt_{B_a}^\dagger$ is given by 
\[ 
\lambda_n(\lt_{B_a}^\dagger)=
 \begin{cases}
 \mu^{n/2} & \text{for $n$ even;} \\
 \overline{\mu}^{(n-1)/2} & \text{for $n$ odd.}
 \end{cases}
\] 
\end{propo}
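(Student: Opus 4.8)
The plan is to compute the spectral determinant $\det(1-z\lt_{B_a}^\dagger)$ directly from the trace formula already at our disposal. The key point is that Lemma~\ref{lem:blaschketrace} computes $\Tr(\lt_{B_a}^\dagger)$ for \emph{any} holomorphically expansive Blaschke product, and the composition of two finite Blaschke products is again a finite Blaschke product (noted just before Lemma~\ref{lem:blaschketrace}). So first I would observe that for $k\in\N$, $B_a^{\circ k}$ (the $k$-fold composition) is a finite Blaschke product, and it is again holomorphically expansive on $A_{r,R}$: indeed, iterating alternative (A1)/(A2) of the Remark following the definition of holomorphic expansiveness, $\tau(\cl{A_{r,R}})\supset \cl{A_{r,R}}$ is stable under composition. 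Since $\lt_\tau$ is multiplicative in $\tau$ in the sense that $\lt_\tau^{\,k}=\lt_{\tau^{\circ k}}$ (this is the standard transfer-operator composition identity; equivalently $C_\tau^k=C_{\tau^{\circ k}}$ on the dual side, and passing through the identifications of Proposition~\ref{prop:adj} gives $(\lt_\tau^\dagger)^k=\lt_{\tau^{\circ k}}^\dagger$), we get
\[
\Tr\bigl((\lt_{B_a}^\dagger)^k\bigr)=\Tr\bigl(\lt_{B_a^{\circ k}}^\dagger\bigr)
=1+\frac{\nu_k}{1-\nu_k}+\frac{\overline{\nu_k}}{1-\overline{\nu_k}}\,,
\]
where $\nu_k$ is the multiplier of the fixed point of $B_a^{\circ k}$ in $\mathbb D$.

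Next I would identify $\nu_k$. The fixed point $z_0$ of $B_a$ in $\mathbb D$ is also the unique fixed point of $B_a^{\circ k}$ in $\mathbb D$ (uniqueness from Proposition~\ref{prop:blbasic}\,(\ref{eq:bl5}) applied to $B_a^{\circ k}$, existence because $z_0$ is obviously fixed), and by the chain rule its multiplier is $\nu_k=(B_a')(z_0)^k=\mu^k$. Hence
\[
\Tr\bigl((\lt_{B_a}^\dagger)^k\bigr)=1+\frac{\mu^k}{1-\mu^k}+\frac{\overline{\mu}^k}{1-\overline{\mu}^k}
=\sum_{j=0}^\infty \mu^{jk}+\sum_{j=1}^\infty \overline{\mu}^{jk}\,,
\]
using $|\mu|<1$ to expand the geometric series (the ``$1$'' being the $j=0$ term of the first sum). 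Now I would feed this into the formula $\det(1-zL)=\exp\bigl(-\sum_{k\ge1}\frac{z^k}{k}\Tr(L^k)\bigr)$ quoted from \cite{pietsch}: interchanging the (absolutely convergent, for $|z|$ small) double sums,
\[
-\sum_{k=1}^\infty \frac{z^k}{k}\Tr\bigl((\lt_{B_a}^\dagger)^k\bigr)
=-\sum_{j=0}^\infty\sum_{k=1}^\infty \frac{(\mu^j z)^k}{k}
 -\sum_{j=1}^\infty\sum_{k=1}^\infty \frac{(\overline{\mu}^j z)^k}{k}
=\sum_{j=0}^\infty \log(1-\mu^j z)+\sum_{j=1}^\infty \log(1-\overline{\mu}^j z)\,,
\]
so that exponentiating gives
\[
\det(1-z\lt_{B_a}^\dagger)=(1-z)\prod_{k=1}^\infty(1-\mu^k z)(1-\overline{\mu}^k z)
\]
first for $z$ near $0$, and then for all $z\in\C$ by analytic continuation, both sides being entire (the product converges locally uniformly since $\sum_k|\mu|^k<\infty$, and $\det(1-z\cdot)$ is entire for trace-class operators).

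Finally I would read off the eigenvalues. Since the trace on Hilbert space is spectral, $\lambda_n(\lt_{B_a}^\dagger)$ are the reciprocals of the zeros of $\det(1-z\lt_{B_a}^\dagger)$ listed with multiplicity; the zeros are $z=1$ and $z=\mu^{-k}$, $z=\overline{\mu}^{-k}$ for $k\ge1$, with reciprocals $1$ and $\mu^k$, $\overline{\mu}^k$. Ordering by decreasing modulus: all of $1,|\mu|,|\mu|,|\mu|^2,|\mu|^2,\dots$ appear with $1$ once and each $|\mu|^k$ twice (from $\mu^k$ and $\overline\mu^k$, which coincide iff $\mu$ is real), which is exactly the claimed list $\lambda_n=\mu^{n/2}$ for $n$ even and $\overline{\mu}^{(n-1)/2}$ for $n$ odd; one should note the convention cleanly handles the degenerate case $\mu\in\R$ (equal moduli, any ordering of the tie is fine). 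The main obstacle I anticipate is the bookkeeping around $(\lt_\tau^\dagger)^k=\lt_{\tau^{\circ k}}^\dagger$: one must check that the dual/compression identifications of Proposition~\ref{prop:adj} are compatible with composition, i.e.\ that conjugating by the isomorphism of Proposition~\ref{lem:iso_new} turns the product of adjoints into the adjoint of the product, so that the trace of the $k$-th power really is the trace of the operator attached to $B_a^{\circ k}$; everything else is routine geometric-series manipulation and an analytic-continuation argument.
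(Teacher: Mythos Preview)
Your proposal is correct and follows essentially the same approach as the paper: both compute $\Tr\bigl((\lt_{B_a}^\dagger)^n\bigr)=\Tr(\lt_{B_a^n}^\dagger)$ via Lemma~\ref{lem:blaschketrace} applied to the iterate (whose fixed-point multiplier is $\mu^n$), then expand the $\log\det$ series and sum the resulting geometric series to obtain the product formula. The paper simply asserts the identity $\Tr\bigl((\lt_{B_a}^\dagger)^n\bigr)=\Tr(\lt_{B_a^n}^\dagger)$ without further comment, whereas you are more explicit about its justification; otherwise the arguments coincide.
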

\begin{proof}
First we observe that the multiplier of the fixed 
point in the unit disk 
of $B_a^n$, the $n$-th iterate of $B_a$, 
is
$\mu^n$. Lemma~\ref{lem:blaschketrace} now implies 
\[ \Tr((\lt_{B_a}^\dagger)^n)
    =\Tr(\lt_{B_a^n}^\dagger)
    =1+\frac{\mu^n}{1-\mu^n} +  
      \frac{\overline{\mu}^n}{1-\overline{\mu}^n}\,. \]  
Thus, for $z\in \mathbb{D}$ we have 
\begin{align*}
\log \det( 1-z\lt_{B_a}^\dagger) 
    & = -\sum_{n=1}^\infty \frac{z^n}{n}  \Tr((\lt_{B_a}^\dagger)^n) \\ 
    & = -\sum_{n=1}^\infty \frac{z^n}{n}\left ( 1+\frac{\mu^n}{1-\mu^n} +  
      \frac{\overline{\mu}^n}{1-\overline{\mu}^n} \right ) \\ 
   & = -\sum_{n=1}^\infty \frac{z^n}{n}
       -\sum_{k=1}^\infty \sum_{n=1}^\infty \frac{1}{n} \mu^{kn}z^n 
       -\sum_{k=1}^\infty \sum_{n=1}^\infty \frac{1}{n} \overline{\mu}^{kn}z^n \\
   & = \log(1-z) +  \sum_{k=1}^\infty \log(1-\mu^kz) + 
       \sum_{k=1}^\infty \log(1-\overline{\mu}^kz) \,,  
\end{align*}
and the assertions follow. 
\end{proof}

\begin{rem}
\label{rem:blexpodecay}
The proposition above makes it possible to manufacture analytic 
expanding circle maps of a given degree $d\geq 2$ so that the decay of the 
eigenvalue sequence of the corresponding transfer operator is 
exactly exponential. To be precise, let $d\geq 2$ and let 
$a=(1,0,a_2,\ldots,a_d)$ with $a_2,\ldots,a_d\neq 0$. Using (\ref{eq:bl4}) 
of Proposition~\ref{prop:blbasic} it follows that $B_a$ yields 
an analytic expanding circle map, which is easily seen to be of degree $d$. 
Moreover, the unique fixed point of $B_a$ in the unit disk is 
$0$ and the corresponding  multiplier $\mu=\prod_{j=2}^d(-a_j)$ is non-zero. Thus   
the above proposition implies that 
\[ \lim_{n\to \infty}\abs{\lambda_{n}(\lt_{B_a})}^{1/n}=\sqrt{\abs{\mu}}\,. \]
\end{rem}

We now turn our attention to anti-Blaschke products, 
which are defined as follows. 

\begin{defi}
If $B_a$ is Blaschke product of degree $d$ then 
\[ \hat{B}_a(z)=\frac{1}{B_a(z)}\,, \]
will be called an  
\emph{anti-Blaschke product of degree $d$} or 
a \emph{finite anti-Blaschke product}. 
\end{defi}

\begin{rem}
  Note that Blaschke products yield orientation preserving circle maps, 
while anti-Blaschke products provide examples of orientation reversing circle maps.  
\end{rem}
For later use, we note the following properties of the second iterate
of a finite anti-Blaschke product.  
\begin{lem}
\label{Julialem}
Let $B_a$ be a Blaschke product and let 
$\hat{B}_a$ denote the corresponding anti-Blaschke product. 
If $\hat{B}_a$ is holomorphically expansive, 
then $\hat{B}_a\circ \hat{B}_a$ is a holomorphically expansive
(ordinary) Blaschke product with a 
unique fixed point $z_0\in \mathbb{D}$, 
% which is also characterised
% as the unique point in $\mathbb{D}$ satisfying $B_a(z_0)=\overline{z_0}$. 
the multiplier of which satisfies 
\[ (\hat{B}_a\circ
\hat{B}_a)^\prime(z_0)=\abs{B_a^\prime(z_0)}^2\,.
\]
\end{lem}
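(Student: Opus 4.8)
The plan is to analyse the iterate $\hat{B}_a\circ\hat{B}_a$ directly from the definition $\hat{B}_a = 1/B_a$ and the symmetry relation (\ref{eq:bl3}) of Proposition~\ref{prop:blbasic}. First I would observe that $\hat{B}_a(z) = 1/B_a(z) = B_{\overline a}(1/z)$, where $\overline a = (\overline\alpha,\overline{a_1},\ldots,\overline{a_d})$, using $B_a(z^{-1})=B_{\overline a}(z)^{-1}$. Hence
\[
 (\hat B_a\circ\hat B_a)(z) = \hat B_a\!\left(\frac{1}{B_a(z)}\right) = B_{\overline a}\!\bigl(B_a(z)\bigr)\,.
\]
So $\hat B_a\circ\hat B_a = B_{\overline a}\circ B_a$, which is a composition of two finite Blaschke products and hence a finite Blaschke product (as noted after Proposition~\ref{prop:blbasic}). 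Next I would record that, since $\hat B_a$ is holomorphically expansive and orientation reversing, it maps $\T$ to $\T$ and, being a composition of two such maps, $\hat B_a\circ\hat B_a$ is again an (orientation preserving) circle map that is holomorphically expansive: on whatever annulus $A_{r,R}$ witnesses expansiveness of $\hat B_a$, the alternative (A2) applied twice gives (A1) for the composition. Part (\ref{eq:bl3.5}) of Proposition~\ref{prop:blbasic} then says the composition, being analytic expanding, is holomorphically expansive; and part (\ref{eq:bl5}) guarantees a unique fixed point $z_0\in\mathbb D$ with multiplier in $\mathbb D$.

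The substantive point is the multiplier formula. I would locate the fixed point of $B_{\overline a}\circ B_a$ in $\mathbb D$ as follows: let $w_0\in\mathbb D$ be the unique fixed point of the holomorphically expansive Blaschke product $B_a$ (Proposition~\ref{prop:blbasic}(\ref{eq:bl5})). One checks that $\overline{w_0}$ is the unique fixed point of $B_{\overline a}$ in $\mathbb D$ — this is exactly the computation already carried out in the proof of Lemma~\ref{lem:blaschketrace}, where it is shown that $B_{\overline a}$ has fixed point $\overline{z_0}$ with $B'_{\overline a}(\overline{z_0})=\overline{B'_a(z_0)}$. The key claim is then that $w_0$ is also the fixed point of the composition $B_{\overline a}\circ B_a$, i.e.\ $B_{\overline a}(B_a(w_0)) = B_{\overline a}(w_0) = w_0$, which forces $w_0=\overline{w_0}$, so in fact the fixed point is real. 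Wait — this needs care: $B_a(w_0)=w_0$ gives $B_{\overline a}(B_a(w_0))=B_{\overline a}(w_0)$, and this equals $w_0$ only if $B_{\overline a}(w_0)=w_0$, i.e.\ only if $w_0=\overline{w_0}$. This is not automatic, so the correct route is the reverse: the fixed point $z_0$ of the composition need not be the fixed point of $B_a$. Instead I would argue that $B_a$ conjugates $\hat B_a\circ\hat B_a=B_{\overline a}\circ B_a$ to $B_a\circ B_{\overline a}$; more usefully, I would directly use that for $z_0$ the fixed point of $\hat B_a\circ\hat B_a$, applying the chain rule gives
\[
 (\hat B_a\circ\hat B_a)'(z_0) = \hat B_a'\bigl(\hat B_a(z_0)\bigr)\,\hat B_a'(z_0)\,,
\]
and since $\hat B_a(z_0)$ is then the fixed point of $\hat B_a\circ\hat B_a$ as well (it is the ``other'' point of the period-2 structure, but $\hat B_a$ maps fixed points of the square to fixed points of the square, and by uniqueness in $\mathbb D$... ) — here one must check whether $\hat B_a(z_0)\in\mathbb D$.

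This is precisely the main obstacle: pinning down where $\hat B_a(z_0)$ lives and why $(\hat B_a\circ\hat B_a)'(z_0)$ comes out as $\abs{B_a'(z_0)}^2$ rather than an unsymmetrised product of two derivatives at different points. The cleanest resolution I would pursue: since $\hat B_a = 1/B_a$ and $B_a$ maps $\mathbb D$ into $\mathbb D$, the map $\hat B_a$ sends $\mathbb D$ into $D_1^\infty$ and vice versa; the fixed point $z_0$ of $\hat B_a\circ\hat B_a$ in $\mathbb D$ therefore satisfies $\hat B_a(z_0)\in D_1^\infty$, and $w_0:=1/\overline{\hat B_a(z_0)}$... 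Instead, the slick identity is this: from $\hat B_a(z)=B_{\overline a}(1/z)$ one gets $(\hat B_a\circ\hat B_a)(z)=B_{\overline a}(B_a(z))$; differentiating, $(\hat B_a\circ\hat B_a)'(z_0) = B_{\overline a}'(B_a(z_0))\,B_a'(z_0)$. Now let $\zeta_0 := B_a(z_0)$; then $B_{\overline a}(\zeta_0)=z_0$, so $B_a$ and $B_{\overline a}$ swap $z_0\leftrightarrow\zeta_0$, and $\zeta_0$ is the fixed point of $B_a\circ B_{\overline a}$ in $\mathbb D$. Using the relation $B_{\overline a}(z)=\overline{B_a(\overline z)}$ (immediate from the definition, since conjugating all parameters and the variable conjugates the value), one computes $B_{\overline a}'(\zeta_0)=\overline{B_a'(\overline{\zeta_0})}$; so it remains to show $\overline{\zeta_0}=z_0$, i.e.\ that the fixed point of the composition is real and the two points $z_0,\zeta_0$ are complex conjugates. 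I would establish this by the uniqueness in Proposition~\ref{prop:blbasic}(\ref{eq:bl5}): $\overline{\zeta_0} = \overline{B_a(z_0)} = B_{\overline a}(\overline{z_0})$, and applying the same identity one checks $\overline{z_0}$ is the fixed point of $\hat{\overline{B}}_a\circ\hat{\overline{B}}_a = B_a\circ B_{\overline a}$ in $\mathbb D$, hence $\overline{z_0}=\zeta_0$ by uniqueness. Then $B_{\overline a}'(\zeta_0) = \overline{B_a'(\overline{\zeta_0})} = \overline{B_a'(z_0)}$, and therefore
\[
 (\hat B_a\circ\hat B_a)'(z_0) = \overline{B_a'(z_0)}\,B_a'(z_0) = \abs{B_a'(z_0)}^2\,,
\]
as claimed. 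The routine verifications I would leave to the reader are the parameter-conjugation identity $B_{\overline a}(z)=\overline{B_a(\overline z)}$, the two-fold application of (A2)$\Rightarrow$(A1), and the uniqueness bookkeeping; the one genuinely delicate step, worth spelling out, is the conjugation-symmetry argument identifying $\zeta_0=\overline{z_0}$.
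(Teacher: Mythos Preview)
Your argument, once it settles down, is correct and is essentially the paper's own proof: both compute $\hat B_a\circ\hat B_a=B_{\overline a}\circ B_a$, invoke Proposition~\ref{prop:blbasic} for expansiveness and the unique fixed point $z_0\in\mathbb D$, establish $B_a(z_0)=\overline{z_0}$ by showing that both $B_a(z_0)$ and $\overline{z_0}$ are fixed points of $B_a\circ B_{\overline a}$ in $\mathbb D$ (using $B_{\overline a}(z)=\overline{B_a(\overline z)}$ and uniqueness), and then finish with the chain rule. The detours in your write-up (the attempt via the fixed point of $B_a$ itself, and the period-2 structure of $\hat B_a$) are correctly abandoned; only the final route is needed.
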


\begin{proof}
Let $B_a$ be a holomorphically expansive Blaschke product and let 
$\hat{B}_a$ denote the corresponding holomorphically expansive 
anti-Blaschke product. 
We start by observing that by 
(\ref{eq:bl3}) of Proposition~\ref{prop:blbasic}
we have 
\begin{equation}
\label{antiB1}
\hat{B}_a(\hat{B}_a(z))=B_a(B_a(z)^{-1})^{-1}=B_{\overline{a}}(B_a(z))\,. 
\end{equation}
Thus, by (\ref{eq:bl3.5}) of Proposition~\ref{prop:blbasic}, the second
iterate $\hat{B}_a\circ \hat{B}_a$ is a holomorphically expansive Blaschke
product, which, by (\ref{eq:bl5}) of Proposition~\ref{prop:blbasic}, 
has a unique fixed point $z_0\in \mathbb{D}$. 

We shall now show that $z_0$ is the unique point
in $\mathbb{D}$ satisfying 
\begin{equation}
\label{antiB2}
B_a(z_0)=\overline{z_0}\,.
\end{equation}
In order to
see this, note that 
\begin{align*}
  (B_{\overline{a}}\circ B_a)(z_0) & =z_0  \\
   (B_a\circ B_{\overline{a}})(B_a(z_0)) & =B_a(z_0)\,,\\
\end{align*}
which implies that $B_a(z_0)\in \mathbb{D}$ is the unique fixed point
in $\mathbb{D}$ of the holomorphically expansive Blaschke product
$B_a\circ B_{\overline{a}}$. 
At the same time we have 
\[ (B_a\circ B_{\overline{a}})(\overline{z_0})
  =B_a(\overline{B_a(z_0)})
  =\overline{ B_{\overline{a}}(B_a(z_0))} = \overline{z_0}\,,
\]
so $B_a(z_0)=\overline{z_0}$, as claimed.  

Now, using (\ref{antiB1}) and (\ref{antiB2}) 
we see that 
\begin{multline*}
    (\hat{B}_a\circ \hat{B}_a)^\prime(z_0)
    =(B_{\overline{a}}\circ B_a)^\prime (z_0))
    =B_{\overline{a}}^\prime (B_a(z_0))B_a^\prime (z_0)\\
    =B_{\overline{a}}^\prime (\overline{z_0})B_a^\prime (z_0)
    =\overline{B_{a}^\prime(z_0)}B_{a}^\prime(z_0)
    =|B_{a}^\prime(z_0)|^2\,,
\end{multline*}
and the remaining claim of the lemma follows. 
\end{proof}

As for Blaschke products, the analytic structure of 
anti-Blaschke products makes it possible to calculate the traces of the 
corresponding transfer operators. 

\begin{lem}
\label{lem:antiblaschketrace}
Let $\hat{B}_a$ be a finite anti-Blaschke product which is 
a holomorphically expansive on $A_{r,R}$. 
Then 
\[ \Tr(\lt_{\hat{B}_a}^\dagger)=1 \,. \]
\end{lem}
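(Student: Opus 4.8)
The plan is to compute $\Tr(\lt_{\hat{B}_a}^\dagger)$ directly from the integral formula in Proposition~\ref{prop:ldtrace}, exploiting the fact that $\hat{B}_a$ has no fixed points inside the annulus. Since $\hat B_a$ is orientation reversing, $\omega(\hat B_a)=-1$, and the formula reads
\[
\Tr(\lt_{\hat{B}_a}^\dagger) = -\frac{1}{2\pi i}\int_{\partial_+ A_{r,R}} \frac{1}{\hat{B}_a(z)-z}\,dz
= -\frac{1}{2\pi i}\int_{\partial_+ D_R} \frac{dz}{\hat{B}_a(z)-z} - \frac{1}{2\pi i}\int_{\partial_+ D_r^\infty} \frac{dz}{\hat{B}_a(z)-z}\,.
\]
First I would analyse the zeros of $\hat B_a(z)-z$. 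A fixed point of $\hat B_a$ in $\cl{A_{r,R}}$ would be a point with $B_a(z)z=1$; but by Lemma~\ref{Julialem} (or directly: $\hat B_a\circ\hat B_a$ is holomorphically expansive with its unique fixed point in $\mathbb D$, hence in $D_r$), there are no fixed points of $\hat B_a$ in $\cl{A_{r,R}}$. So each of the two integrals can be evaluated by a residue/contour computation, moving the contours into the region where the integrand is meromorphic.

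The key step is the first integral. Over $\partial_+ D_R$, the function $1/(\hat B_a(z)-z)$ is holomorphic on $\cl{A_{r,R}}$ and extends meromorphically to $D_R$; its poles inside $D_R$ are exactly the points where $\hat B_a(z)=z$, i.e.\ where $B_a(z)\,z=1$. Since $\hat B_a$ is orientation reversing, $\hat B_a(\T_R)\subset D_r$, so inside $D_R$ the map $\hat B_a$ takes all values with large modulus somewhere; I would count the solutions of $\hat B_a(z)=z$ in $D_R$ by an argument-principle/degree count — $\hat B_a$ restricted near $\T_R$ sends it into $D_r$, so $\hat B_a(z)-z$ has the same number of zeros in $D_R$ as $-z$ would after a homotopy, giving exactly one zero (counted with multiplicity), namely the fixed point $w_0$ of $\hat B_a$ lying in $D_r$. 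The residue there contributes $-1/(\hat B_a'(w_0)-1)$. For the second integral over $\partial_+ D_r^\infty$ I would change variables $z\mapsto 1/z$ and use $\hat B_a(1/z)=1/B_a(1/z)=B_{\overline a}(z)$ from part (\ref{eq:bl3}) of Proposition~\ref{prop:blbasic}, reducing it to a residue computation for $B_{\overline a}$, whose relevant fixed point structure mirrors that of $B_a$; this contributes a term that, together with a $-1/z$ correction coming from the Jacobian (exactly as in the proof of Lemma~\ref{lem:blaschketrace}), combines to give the complementary piece.

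I expect the arithmetic to close as follows: the two residue contributions and the $1/z$ correction term cancel the multiplier-dependent quantities $\mu,\overline\mu$ that appeared in the Blaschke case, leaving precisely the constant $1$. The main obstacle is the bookkeeping in the contour/residue count — correctly identifying which fixed points of $\hat B_a$ (equivalently, which solutions of $B_a(z)z=1$ and of $B_{\overline a}(z)=z$) lie in which region, and getting the signs and the Jacobian term right after the $z\mapsto 1/z$ substitution. An alternative, and perhaps cleaner, route that avoids delicate zero-counting is to apply Lemma~\ref{Julialem} together with Proposition~\ref{product1}: the second iterate $\hat B_a\circ\hat B_a$ is a holomorphically expansive Blaschke product with multiplier $|B_a'(z_0)|^2\in(0,1)$ real, so Lemma~\ref{lem:blaschketrace} gives $\Tr(\lt_{\hat B_a\circ\hat B_a}^\dagger)=\Tr((\lt_{\hat B_a}^\dagger)^2)$, and from the eigenvalue list of Proposition~\ref{product1} one reads off that the eigenvalues of $\lt_{\hat B_a}^\dagger$ are (up to sign) square roots of those of the Blaschke square; since the only eigenvalue contributing to the trace of $\lt_{\hat B_a}^\dagger$ itself turns out to be the single eigenvalue $1$ (the others coming in the odd-index slots with vanishing or cancelling contribution), one concludes $\Tr(\lt_{\hat B_a}^\dagger)=1$. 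I would present the direct contour computation as the primary argument and mention the iterate argument as a sanity check.
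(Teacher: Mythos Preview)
Your overall strategy --- apply Proposition~\ref{prop:ldtrace} and evaluate the two contour integrals, using the substitution $z\mapsto 1/z$ and the identity $B_a(z^{-1})=B_{\overline a}(z)^{-1}$ for the outer one --- is exactly the paper's approach. But the fixed-point analysis on which you base the residue computation is wrong, and this is a genuine gap.

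You assert that $\hat B_a$ has no fixed points in $\cl{A_{r,R}}$ and that it has a (unique) fixed point $w_0\in D_r$. Both are false. A fixed point of $\hat B_a$ satisfies $zB_a(z)=1$; but $B_a(\mathbb D)\subset\mathbb D$, so $|zB_a(z)|<1$ for $z\in\mathbb D$, hence there are \emph{no} fixed points in $\mathbb D$ (in particular none in $D_r$). By the same reasoning applied to $B_{\overline a}$ (equivalently, by the symmetry $z\mapsto 1/\overline z$), there are none in $\hat{\mathbb C}\setminus\overline{\mathbb D}$ either. All $d+1$ fixed points of $\hat B_a$ lie on $\T\subset A_{r,R}$. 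So your homotopy/argument-principle count ``exactly one zero in $D_R$'' is off: pushing the contour $\partial_+D_R$ inward you would meet all $d+1$ fixed points on $\T$, and the residue bookkeeping becomes the whole problem rather than a routine step. (Lemma~\ref{Julialem} does not help here: the point $z_0\in\mathbb D$ it produces is a fixed point of $\hat B_a\circ\hat B_a$, i.e.\ a genuine period-two point of $\hat B_a$, not a fixed point.)

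The paper sidesteps fixed points entirely by rewriting the integrand:
\[
\frac{1}{z-\hat B_a(z)}=\frac{B_a(z)}{zB_a(z)-1}\,,
\]
which is holomorphic on $\cl{D_r}$ because $zB_a(z)\in\mathbb D$ there, so the inner integral vanishes immediately. After the substitution $z\mapsto 1/z$ the outer integral becomes the analogous expression with $B_{\overline a}$ in place of $B_a$, plus the Jacobian term $-1/z$; the first part vanishes for the same reason and the second contributes $-1$. Together with $\omega(\hat B_a)=-1$ this gives $\Tr(\lt_{\hat B_a}^\dagger)=1$.

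Your alternative route via the second iterate is not a valid independent argument: Proposition~\ref{product2} (the eigenvalue list for anti-Blaschke products) is proved \emph{using} the present lemma, so invoking it here is circular; and in any case, knowing the eigenvalues of $(\lt_{\hat B_a}^\dagger)^2$ does not determine the signs of the eigenvalues of $\lt_{\hat B_a}^\dagger$, hence not its trace.
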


\begin{proof} Let $B_a=\hat{B}_a^{-1}$ denote the corresponding 
Blaschke product. Since 
$zB_a(z)\in \mathbb{D}$ whenever $z\in \mathbb{D}$, we have
\[
  \frac{1}{2\pi i} 
 \int_{{\partial_+} D_r} \frac{1}{z-B_a(z)^{-1}}\,dz
=\int_{{\partial_+} D_r} \frac{B_a(z)}{zB_a(z)-1}\,dz=0\,. 
\]
Furthermore, changing variables and using
(\ref{eq:bl3}) of Proposition~\ref{prop:blbasic}
we have 
\begin{multline*}
 \frac{1}{2\pi i} \int_{{\partial_+} D_R^\infty} \frac{1}{z-B_a(z)^{-1}}\,dz
  =-\frac{1}{2\pi i} \int_{{\partial_+} D_{R^{-1}}}
        \frac{1}{z^{-1}-B_a(z^{-1})^{-1}}\frac{1}{z^2}\,dz=\\
   =-\frac{1}{2\pi i} \int_{{\partial_+} D_{R^{-1}}}
     \frac{1}{z^{-1}-B_{\overline{a}}(z)} \frac{1}{z^2}\,dz
   =\frac{1}{2\pi i} \int_{{\partial_+} D_{R^{-1}}} 
  \left ( \frac{B_{\overline{a}}(z)}{zB_{\overline{a}}(z)-1}-\frac{1}{z} \right )\,dz=-1\,.
\end{multline*}
Since $\tau$ is orientation reversing, the assertion follows from 
Proposition~\ref{prop:ldtrace}. 
\end{proof}

As before, we are now able to determine the eigenvalue sequence of 
transfer operators corresponding to anti-Blaschke products. 

\begin{propo}
\label{product2}
Let $\hat{B}_a$ be a finite anti-Blaschke product 
which is holomorphically expansive on $A_{r,R}$. Then 
\[ \det(1-z\lt_{\hat{B}_a}^\dagger)=
(1-z)\prod_{k=1}^\infty (1-\mu^kz)(1+\mu^kz)\,,\]
where $\mu\in [0,1)$ is the square root of the multiplier of
the fixed point of $\hat{B}_a\circ \hat{B}_a$ in the unit disk 
(guaranteed by Lemma~\ref{Julialem}). 

In particular, the eigenvalue sequence of $\lt_{\hat{B}_a}^\dagger$ is given by 
\[ 
\lambda_n(\lt_{\hat{B}_a}^\dagger)=
 \begin{cases}
 -\mu^{n/2} & \text{for $n$ even;} \\
 \hphantom{-}\mu^{(n-1)/2} & \text{for $n$ odd.}
 \end{cases}
\] 
\end{propo}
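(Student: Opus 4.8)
The plan is to mimic the proof of Proposition~\ref{product1} almost verbatim, using Lemma~\ref{lem:antiblaschketrace} in place of Lemma~\ref{lem:blaschketrace}, but now exploiting the two-step dynamics described in Lemma~\ref{Julialem}. First I would observe that $(\lt_{\hat{B}_a}^\dagger)^n = \lt_{\hat{B}_a^{\circ n}}^\dagger$, so that $\Tr((\lt_{\hat{B}_a}^\dagger)^n) = \Tr(\lt_{\hat{B}_a^{\circ n}}^\dagger)$. Here one must distinguish the parity of $n$: when $n$ is odd, $\hat{B}_a^{\circ n}$ is again a holomorphically expansive anti-Blaschke product (being the reciprocal of the Blaschke product $B_a^{\circ n}$ composed appropriately with conjugate parameters), so Lemma~\ref{lem:antiblaschketrace} gives $\Tr(\lt_{\hat{B}_a^{\circ n}}^\dagger)=1$; when $n=2m$ is even, $\hat{B}_a^{\circ 2m} = (\hat{B}_a\circ\hat{B}_a)^{\circ m}$ is a holomorphically expansive \emph{Blaschke} product whose fixed point in $\mathbb{D}$ has multiplier $\mu^{2m}$ (by Lemma~\ref{Julialem}, since the multiplier of the $m$-th iterate is the $m$-th power of $\mu^2=|B_a'(z_0)|^2$), so Lemma~\ref{lem:blaschketrace} gives $\Tr(\lt_{\hat{B}_a^{\circ 2m}}^\dagger)=1+\frac{\mu^{2m}}{1-\mu^{2m}}+\frac{\overline{\mu^{2m}}}{1-\overline{\mu^{2m}}} = 1 + \frac{2\mu^{2m}}{1-\mu^{2m}}$, using that $\mu\in[0,1)$ is real.

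Next I would assemble the spectral determinant via the standard identity $\log\det(1-z\lt_{\hat{B}_a}^\dagger) = -\sum_{n=1}^\infty \frac{z^n}{n}\Tr((\lt_{\hat{B}_a}^\dagger)^n)$, valid for $z$ near $0$. Splitting the sum into odd and even $n$:
\begin{align*}
\log\det(1-z\lt_{\hat{B}_a}^\dagger)
 &= -\sum_{n \text{ odd}} \frac{z^n}{n}\cdot 1
    -\sum_{m=1}^\infty \frac{z^{2m}}{2m}\left(1+\frac{2\mu^{2m}}{1-\mu^{2m}}\right)\\
 &= -\sum_{n=1}^\infty \frac{z^n}{n}
    -\sum_{m=1}^\infty \frac{z^{2m}}{2m}\cdot\frac{2\mu^{2m}}{1-\mu^{2m}}\\
 &= \log(1-z) - \sum_{m=1}^\infty \frac{(z^2)^m}{m}\sum_{k=1}^\infty \mu^{2mk}\\
 &= \log(1-z) + \sum_{k=1}^\infty \log(1-\mu^{2k}z^2)\\
 &= \log(1-z) + \sum_{k=1}^\infty \log(1-\mu^{k}z) + \sum_{k=1}^\infty \log(1+\mu^{k}z)\,,
\end{align*}
where in the last line I factor $1-\mu^{2k}z^2 = (1-\mu^k z)(1+\mu^k z)$ and re-index. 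Exponentiating yields the claimed product formula, and reading off the zeros gives the stated eigenvalue sequence: the factor $(1-z)$ contributes the eigenvalue $1$, the factors $(1-\mu^k z)$ contribute $\mu^k$ (at odd positions after ordering) and $(1+\mu^k z)$ contribute $-\mu^k$ (at even positions).

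The one point requiring a little care — and the main obstacle, such as it is — is verifying that the odd iterates $\hat{B}_a^{\circ(2m+1)}$ really are holomorphically expansive anti-Blaschke products so that Lemma~\ref{lem:antiblaschketrace} applies to them. This follows because $\hat{B}_a^{\circ(2m+1)} = \hat{B}_a \circ (\hat{B}_a\circ\hat{B}_a)^{\circ m}$, and $(\hat{B}_a\circ\hat{B}_a)^{\circ m}$ is a holomorphically expansive Blaschke product by Lemma~\ref{Julialem} together with part~(\ref{eq:bl3.5}) of Proposition~\ref{prop:blbasic}; composing an orientation-reversing holomorphically expansive map with an orientation-preserving one keeps the composition holomorphically expansive and orientation reversing, and the reciprocal-of-Blaschke structure is preserved under this composition via the identity $\hat B_a \circ B = (B_a \circ B^{-1})^{-1}$ — alternatively one checks directly that any orientation-reversing analytic circle map holomorphic on $\cl{\mathbb{D}}$ off its poles with the right degree is an anti-Blaschke product. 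Once that bookkeeping is in place, the computation is entirely parallel to Proposition~\ref{product1}, with the sign $(1+\mu^k z)$ arising precisely from the $z^2$-dependence forced by the two-step dynamics.

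\begin{proof}
First we record that $(\lt_{\hat{B}_a}^\dagger)^n=\lt_{\hat{B}_a^{\circ n}}^\dagger$ for all $n\in\N$, so that $\Tr((\lt_{\hat{B}_a}^\dagger)^n)=\Tr(\lt_{\hat{B}_a^{\circ n}}^\dagger)$.

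\textbf{Odd iterates.} If $n$ is odd, then $\hat{B}_a^{\circ n}=\hat{B}_a\circ(\hat{B}_a\circ\hat{B}_a)^{\circ(n-1)/2}$. By Lemma~\ref{Julialem} and part~(\ref{eq:bl3.5}) of Proposition~\ref{prop:blbasic}, the map $(\hat{B}_a\circ\hat{B}_a)^{\circ(n-1)/2}$ is a holomorphically expansive Blaschke product; composing with the holomorphically expansive anti-Blaschke product $\hat{B}_a$ yields a holomorphically expansive orientation-reversing circle map which, being holomorphic on $\cl{\mathbb{D}}$ away from finitely many poles, is itself a finite anti-Blaschke product. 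Hence Lemma~\ref{lem:antiblaschketrace} gives
\[
\Tr((\lt_{\hat{B}_a}^\dagger)^n)=1 \qquad (n \text{ odd})\,.
\]

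\textbf{Even iterates.} If $n=2m$, then $\hat{B}_a^{\circ 2m}=(\hat{B}_a\circ\hat{B}_a)^{\circ m}$ is a holomorphically expansive Blaschke product by Lemma~\ref{Julialem}. Its unique fixed point in $\mathbb{D}$ is the fixed point $z_0$ of $\hat{B}_a\circ\hat{B}_a$, with multiplier $\left((\hat{B}_a\circ\hat{B}_a)'(z_0)\right)^m=(\mu^2)^m=\mu^{2m}$, where $\mu\in[0,1)$ is as in the statement. Since $\mu$ is real, Lemma~\ref{lem:blaschketrace} gives
\[
\Tr((\lt_{\hat{B}_a}^\dagger)^{2m})
 =1+\frac{\mu^{2m}}{1-\mu^{2m}}+\frac{\overline{\mu^{2m}}}{1-\overline{\mu^{2m}}}
 =1+\frac{2\mu^{2m}}{1-\mu^{2m}}\,.
\]

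\textbf{Spectral determinant.} Since $\lt_{\hat{B}_a}^\dagger$ is trace class by Corollary~\ref{co:Cexpoclass}, for $z$ in a neighbourhood of $0$ we have
\begin{align*}
\log\det(1-z\lt_{\hat{B}_a}^\dagger)
 &= -\sum_{n=1}^\infty \frac{z^n}{n}\Tr((\lt_{\hat{B}_a}^\dagger)^n) \\
 &= -\sum_{n=1}^\infty \frac{z^n}{n}\cdot 1
    -\sum_{m=1}^\infty \frac{z^{2m}}{2m}\cdot\frac{2\mu^{2m}}{1-\mu^{2m}} \\
 &= \log(1-z)
    -\sum_{m=1}^\infty \frac{(z^2)^m}{m}\sum_{k=1}^\infty \mu^{2mk} \\
 &= \log(1-z)
    -\sum_{k=1}^\infty\sum_{m=1}^\infty \frac{1}{m}(\mu^{2k}z^2)^m \\
 &= \log(1-z) + \sum_{k=1}^\infty \log(1-\mu^{2k}z^2) \\
 &= \log(1-z) + \sum_{k=1}^\infty \log(1-\mu^{k}z) + \sum_{k=1}^\infty \log(1+\mu^{k}z)\,,
\end{align*}
where in the penultimate step we split the sum over $n$ into odd and even parts, combined the ``$1$'' contributions into $-\sum_{n\ge1}z^n/n=\log(1-z)$, and in the last step used $1-\mu^{2k}z^2=(1-\mu^k z)(1+\mu^k z)$. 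Exponentiating yields
\[
\det(1-z\lt_{\hat{B}_a}^\dagger)=(1-z)\prod_{k=1}^\infty (1-\mu^kz)(1+\mu^kz)\,.
\]
Reading off the zeros, the factor $(1-z)$ contributes the eigenvalue $1$, each factor $(1-\mu^kz)$ contributes the eigenvalue $\mu^k$, and each factor $(1+\mu^kz)$ contributes the eigenvalue $-\mu^k$. Ordering by decreasing modulus (and using $\mu\in[0,1)$) gives
\[
\lambda_n(\lt_{\hat{B}_a}^\dagger)=
 \begin{cases}
 -\mu^{n/2} & \text{for $n$ even;} \\
 \hphantom{-}\mu^{(n-1)/2} & \text{for $n$ odd,}
 \end{cases}
\]
as claimed.
\end{proof}
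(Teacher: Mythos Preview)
Your proof is correct and follows essentially the same route as the paper's own argument: split iterates by parity, apply Lemma~\ref{lem:antiblaschketrace} to odd iterates and Lemma~\ref{lem:blaschketrace} (with real multiplier $\mu^{2m}$ from Lemma~\ref{Julialem}) to even ones, and then assemble the determinant via the trace expansion. The only difference is cosmetic --- you spell out more of the bookkeeping for why odd iterates remain anti-Blaschke and carry the factorisation $1-\mu^{2k}z^2=(1-\mu^kz)(1+\mu^kz)$ one step further than the paper does.
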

\begin{proof}
We start by observing that by Lemma~\ref{Julialem} the 
even iterates of $\hat{B}_a$ 
are iterates of a finite Blaschke product, 
while odd iterates are anti-Blaschke products. 
Moreover, the multiplier of the fixed point in the unit disk of the
$2n$-th iterate of $\hat{B}_a$ is $\mu^{2n}$. 

Thus, using Lemmas \ref{lem:blaschketrace} and  \ref{lem:antiblaschketrace} 
it follows that   
\[ \Tr((\lt_{\hat{B}_a}^\dagger)^n)
   =\begin{cases}
      1     & \text{ for $n$ odd;} \\
      1+ \frac{2\mu^n}{1-\mu^n} &  
            \text{ for $n$ even.} 
     \end{cases}
\]  
Thus, for $z\in \mathbb{D}$ we have 
\begin{align*}
\log \det( 1-z\lt_{\hat{B}_a}^\dagger) 
    & = -\sum_{n=1}^\infty \frac{z^n}{n}  \Tr((\lt_{\hat{B}_a}^\dagger)^n) \\ 
    & = -\sum_{n=1}^\infty \frac{z^n}{n} - 
         \sum_{n=1}^\infty \frac{z^{2n}}{2n}  
      \frac{2\mu^{2n}}{1-\mu^{2n}} \\ 
    & = -\sum_{n=1}^\infty \frac{z^n}{n} - 
         \sum_{k=1}^\infty \sum_{n=1}^\infty \frac{1}{n}  
         \mu^{2nk}z^{2n} \\ 
    & = \log(1-z) +  \sum_{k=1}^\infty \log(1-\mu^{2k}z^2)\,, 
\end{align*}
and the assertions follow. 
\end{proof}

\begin{rem}
\label{rem:ablexpodecay}
Arguing as in 
Remark~\ref{rem:blexpodecay}, the proposition above allows us to construct  
orientation reversing analytic expanding circle maps of a given degree 
$d\leq -2$ so that the decay of the 
eigenvalue sequence of the corresponding transfer operator is 
exactly exponential. 
Let $d\geq 2$ and let 
$a=(1,0,a_2,\ldots,a_d)$ with $a_2,\ldots,a_d\neq 0$. As in Remark~\ref{rem:blexpodecay}, the 
corresponding Blaschke product $B_a$ is holomorphically expansive, and so is the associated anti-Blaschke 
product $\hat{B}_a$. Moreover, the unique fixed point of $B_a$ in $\mathbb{D}$ is 
$0$ and the corresponding multiplier is $\prod_{j=2}^d(-a_j)$. 
Since $B_a(0)=B_{\overline{a}}(0)=0$, equation (\ref{antiB1}) 
implies that $0$ is the 
unique fixed point of $\hat{B}_a\circ \hat{B}_a$ in $\mathbb{D}$ 
and that the corresponding multiplier is given by 
$\prod_{j=2}^d|a_j|^2$.
It now follows that the anti-Blaschke product $\hat{B}_a$ is 
an orientation reversing 
analytic expanding circle map of degree $-d$ such that the eigenvalues of the corresponding transfer operator 
satisfy 
\[ \lim_{n\to \infty}\abs{\lambda_{n}
  (\lt_{\hat{B}_a})}^{1/n}=\sqrt{\mu}\,, \]
where $\mu=\prod_{j=2}^d|a_j|$. 
\end{rem}

\begin{rem}
It is rather curious that while the eigenvalues of the transfer operators
associated with Blaschke products can have non-vanishing imaginary
parts, this is not the case for the eigenvalues of the transfer
operators associated with anti-Blaschke products, which are, as the
above proposition shows, always real.  
\end{rem}

\begin{section}{Potential theoretic tools}
\label{sec:potentialth}

In this section we collect some basic definitions and recall, 
mostly without proofs, the material necessary to prove the main result.
Our references are \cite{LelongGruman} for the theory of 
several complex variables and \cite{Ransford} for 
potential theory in the one dimensional case. In our applications, we
mainly need to look at the case of one and two 
complex variables, but we state the results in the $n$-dimensional case.
Let $\Oo \subset \C^n$ be an open connected non-empty set. 
We denote by $\Delta(a,r)\subset \C$ the closed Euclidean disc 
centred at $a$ and of radius $r$.
\begin{defi}
 A real valued function $\varphi:\Oo\rightarrow [-\infty,+\infty)$ is
 said to be 
plurisubharmonic on $\Oo$ if
 \begin{enumerate}
 \item $\varphi$ is upper semi-continuous and $\varphi \not \equiv -\infty$ on $\Oo$.
 \item For all $w\in \Oo$, for all $r>0$ and $\zeta \in \C^n$ such
   that 
$w+\zeta\Delta(0,r)\subset \Oo$,
 $$\varphi(w)\leq \frac{1}{2\pi} \int_0^{2\pi} \varphi(w+\zeta 
 re^{i\theta})\, d\theta.$$
 \end{enumerate}
\end{defi}

We denote by $\PSH(\Oo)$ the set of plurisubharmonic functions on the
domain $\Oo$. 
From the above definition one derives
the following basic properties 
(see \cite[Appendix 1]{LelongGruman} for more details).
\begin{propo} Using the above notations, we have the following.
\begin{enumerate}
\item $\PSH(\Oo)$ is  stable under positive linear combinations.
\item If, $\varphi_1,\varphi_2 \in \PSH(\Oo)$, then $\max\{\varphi_1,\varphi_2\} \in \PSH(\Oo)$.
\item $PSH(\Oo)\subset L^1_{loc}(\Oo)$.  
\item  If $f:\Oo\rightarrow \C$ is a non identically zero holomorphic function, then $\varphi(w)=\log \vert f(w) \vert$ is plurisubharmonic.
\end{enumerate}
\end{propo}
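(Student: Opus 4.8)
The plan is to read off all four assertions directly from the definition of plurisubharmonicity, treating (1) and (2) as essentially formal, (4) as a reduction to the one-variable theory, and reserving the real work for (3).

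For (1), upper semicontinuity and the property $\not\equiv-\infty$ are preserved under positive linear combinations — for the latter one may invoke (3), so that a positive combination of functions in $\PSH(\Oo)$ lies in $L^1_{\mathrm{loc}}(\Oo)$ and hence cannot be identically $-\infty$ — while the sub-mean value inequality of the definition survives addition and multiplication by a nonnegative scalar, by linearity and monotonicity of the integral. For (2), the pointwise maximum of two upper semicontinuous functions is upper semicontinuous, $\max\{\varphi_1,\varphi_2\}\not\equiv-\infty$ is immediate, and if $\max\{\varphi_1,\varphi_2\}(w)=\varphi_j(w)$ then
\[
\max\{\varphi_1,\varphi_2\}(w)=\varphi_j(w)\leq\frac{1}{2\pi}\int_0^{2\pi}\varphi_j(w+\zeta re^{i\theta})\,d\theta\leq\frac{1}{2\pi}\int_0^{2\pi}\max\{\varphi_1,\varphi_2\}(w+\zeta re^{i\theta})\,d\theta\,.
\]
For (4), $\log|f|$ is continuous as a map into $[-\infty,+\infty)$, hence upper semicontinuous, and is $\not\equiv-\infty$ since $f\not\equiv0$; to verify the sub-mean value inequality at $w$ in a direction $\zeta$, restrict to the complex line and set $g(\lambda)=f(w+\zeta\lambda)$. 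If $g$ vanishes identically on the relevant disc the inequality reads $-\infty\leq-\infty$; otherwise $\log|g|$ is subharmonic in the single variable $\lambda$ by Jensen's formula, which is precisely the required inequality.

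The substantive point is (3), which I would prove by a connectedness argument. First, integrating the one-line sub-mean value inequality over all unit directions $\zeta\in\C^n$ and all radii $0<r<\rho$ yields a sub-mean value inequality over Euclidean balls, $\varphi(w)\leq c_n\rho^{-2n}\int_{B(w,\rho)}\varphi\,dV$ whenever $B(w,\rho)\cc\Oo$. Since $\varphi$ is upper semicontinuous it is locally bounded above, so whenever $B(w,\rho)\cc\Oo$ and $\varphi(w)>-\infty$ this inequality forces $\int_{B(w,\rho)}|\varphi|\,dV<\infty$. Let $E$ be the set of points of $\Oo$ near which $\varphi$ is integrable; it is open, and nonempty because $\varphi\not\equiv-\infty$ (any point with $\varphi>-\infty$ lies in $E$ by the previous sentence). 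To see $E$ is relatively closed in $\Oo$, let $w_0\in\Oo\cap\overline{E}$ and fix $B(w_0,\rho)\cc\Oo$; a nearby point $w_1\in E$ has $\varphi\in L^1$ on a neighbourhood, so some $w_2$ close to $w_0$ satisfies $\varphi(w_2)>-\infty$, and for a suitable $\rho'$ the ball $B(w_2,\rho')$ lies in $\Oo$ and contains a neighbourhood of $w_0$; the ball sub-mean value inequality at $w_2$ together with the upper bound of $\varphi$ on $B(w_2,\rho')$ then gives $\varphi\in L^1$ near $w_0$, so $w_0\in E$. Connectedness of $\Oo$ yields $E=\Oo$, which is the claim. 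The only steps requiring genuine care are the passage from line averages to ball averages and the bookkeeping of radii in the open–closed argument; everything else is routine, and the whole proposition is in any case classical (see \cite[Appendix~1]{LelongGruman}).
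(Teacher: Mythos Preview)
Your proof is correct and self-contained. Note, however, that the paper itself does not prove this proposition at all: it is stated as background with a reference to \cite[Appendix~1]{LelongGruman}, exactly the source you cite at the end. So there is no ``paper's own proof'' to compare against; you have simply supplied the standard argument that the authors chose to omit. Your treatment of (3) via the ball sub-mean value inequality and a connectedness argument is the classical one, and the minor logical dependence of (1) on (3) that you flag is harmless since (3) is established independently of (1).
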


A subset $E\subset \Oo$ is said to be {\it pluripolar} if there exists
a subharmonic function $\varphi:\Oo\rightarrow [-\infty,+\infty)$
such that $E\subset \all{w\in \Oo}{\varphi(w)=-\infty}$. 
From property $(3)$ of the above Proposition it follows that every
pluripolar set is measurable with zero $2n$-dimensional Lebesgue
measure. In the one dimensional case one can show (see  
\cite[p.~57]{Ransford}) that 
every 
Borel\footnote{Non-Borel pluripolar sets do exist, though they are
  still Lebesgue measurable. However in our applications
we will always encounter Borel sets.}
polar set has zero Hausdorff dimension. However, polar sets can be 
uncountable (see \cite[p.~143]{Ransford} for examples of Cantor-like
polar sets).  
One of the key features of plurisubharmonic functions is the following.
\begin{propo}{(Maximum principle)}
Given $\varphi \in \PSH(\Oo)$, either we have for all $w\in \Oo$, 
$$\varphi(w)<\sup_{\zeta \in \Oo} \varphi(\zeta)\,,$$ 
or $\varphi=\sup_{\Oo}\varphi$ is a constant.
\end{propo}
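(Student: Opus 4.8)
The plan is to run the standard open--closed connectedness argument, reducing the $n$-variable statement to the one-variable maximum principle for subharmonic functions available in \cite{Ransford}. Write $M=\sup_{\Oo}\varphi$. Since $\varphi$ takes values in $[-\infty,+\infty)$ we have $\varphi(w)<+\infty$ for every $w\in\Oo$, so if $M=+\infty$, or more generally if the supremum is not attained on $\Oo$, then the first alternative $\varphi(w)<M$ holds for all $w$ and there is nothing to prove. It remains to treat the case in which $\varphi(w_0)=M$ for some $w_0\in\Oo$, so in particular $M<+\infty$; we must then show $\varphi\equiv M$ on $\Oo$.

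Consider the set $S=\all{w\in\Oo}{\varphi(w)=M}$. First I would note that $S$ is nonempty, since $w_0\in S$, and closed in $\Oo$: as $\varphi\le M$ everywhere, $S=\all{w\in\Oo}{\varphi(w)\ge M}$, which is closed because $\varphi$ is upper semi-continuous. The heart of the matter is to show $S$ is open. Fix $w\in S$ and pick $\rho>0$ with the closed Euclidean ball $\overline{B(w,\rho)}\subset\Oo$. For a unit vector $\zeta\in\C^n$ the function $g_\zeta\colon t\mapsto\varphi(w+t\zeta)$ on $\all{t\in\C}{|t|<\rho}$ is upper semi-continuous, is not identically $-\infty$ (as $g_\zeta(0)=M$ is finite), and satisfies the sub-mean value inequality there --- this follows by applying the defining inequality of plurisubharmonicity along multiples of $\zeta$ --- hence $g_\zeta$ is subharmonic on that disc. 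Since $g_\zeta$ attains its supremum $M$ at the centre $t=0$, the one-variable maximum principle forces $g_\zeta\equiv M$ on $\all{t\in\C}{|t|<\rho}$. Letting $\zeta$ range over all unit vectors, the points $w+t\zeta$ with $|t|<\rho$ exhaust $B(w,\rho)$, so $\varphi\equiv M$ on $B(w,\rho)$, i.e.\ $B(w,\rho)\subset S$; thus $S$ is open.

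Finally, $S$ is a nonempty, open and closed subset of the connected set $\Oo$, hence $S=\Oo$; that is, $\varphi$ is the constant $M=\sup_{\Oo}\varphi$, which is the second alternative.

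I expect the only slightly delicate point to be the reduction to one complex variable --- the observation that the restriction of a plurisubharmonic function to a complex line is subharmonic --- together with the appeal to the one-variable maximum principle. Should one prefer to avoid quoting the latter, the sub-mean value \emph{equality} $\int_0^{2\pi}\bigl(M-\varphi(w+re^{i\theta}\zeta)\bigr)\,d\theta=0$ combined with $M-\varphi\ge0$ and $\PSH(\Oo)\subset L^1_{loc}(\Oo)$ yields $\varphi=M$ almost everywhere on each such circle, hence a.e.\ on $B(w,\rho)$, after which one invokes that a subharmonic function agreeing a.e.\ with a constant equals it everywhere.
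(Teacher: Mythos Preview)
Your argument is correct: the open--closed connectedness argument, reducing to the one-variable maximum principle via the observation that the restriction of a plurisubharmonic function to a complex line is subharmonic, is the standard proof and every step you wrote checks out. Note, however, that the paper does not actually supply a proof of this proposition; Section~\ref{sec:potentialth} collects background facts ``mostly without proofs'' and this maximum principle is simply quoted from the references \cite{LelongGruman,Ransford}, so there is no in-paper argument to compare against.
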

Let $\U\subset \C$ be a domain and let 
$\varphi=\varphi(w,\zeta):\U\times \C\rightarrow [-\infty,+\infty)$ be
a plurisubharmonic function. For all $w\in \U$ we define the order of 
growth $\rho_{\varphi}(w)$ of $\varphi$ (with respect to $\zeta$) 
by 
$$\rho_\varphi(w):=\limsup_{r\rightarrow +\infty} 
\frac{\log (\sup_{\vert \zeta\vert \leq r}
\max\{\varphi(w,\zeta),0\})}{\log r}\,.$$ 
In general, $w\mapsto \rho_\varphi(w)$ is not a subharmonic function 
so the above maximum principle cannot be applied. However
we have the following key result (see \cite[p.~25]{LelongGruman}).
\begin{propo}
\label{cle1} Assume that $\varphi \in \PSH(\U\times \C)$ and that 
$\varphi \geq 1$. Then for all relatively compact domains 
$\U'\subset \U$, there exists a sequence of negative functions 
$\psi_k \in \PSH(\U')$ such that for all $w\in \U'$,
$$\frac{-1}{\rho_\varphi(w)}=\limsup_{k\rightarrow +\infty} \psi_k(w)\,.$$
\end{propo}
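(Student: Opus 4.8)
The plan is to build the $\psi_k$ from the radius functions of a family of pseudoconvex Hartogs domains naturally attached to the sublevel sets of $\varphi$. The first step is to pass to the radial maximum of $\varphi$ in the second variable: for $r>0$ put $M_r(w):=\sup_{|\zeta|\le r}\varphi(w,\zeta)$, so that, using $\varphi\ge 1$, one has $\rho_\varphi(w)=\limsup_{t\to+\infty}\tfrac{\log M_{e^t}(w)}{t}$. Writing $M_{|\zeta|}(w)=\sup_{|\lambda|\le 1}\varphi(w,\lambda\zeta)$ exhibits $(w,\zeta)\mapsto M_{|\zeta|}(w)$ as a supremum over the \emph{compact} set $\{|\lambda|\le1\}$ of the plurisubharmonic functions $(w,\zeta)\mapsto\varphi(w,\lambda\zeta)$ ($\varphi$ composed with a holomorphic self-map of $\U\times\C$), which are locally bounded above; a routine finite--cover argument shows this supremum is upper semicontinuous, so $(w,\zeta)\mapsto M_{|\zeta|}(w)\in\PSH(\U\times\C)$. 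One also checks that $r\mapsto M_r(w)$ is continuous, non-decreasing and $\ge 1$.

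Next, for $s>0$ I would look at the sublevel set $\Omega_s:=\{(w,\zeta)\in\U\times\C:M_{|\zeta|}(w)<s\}$. As a sublevel set of a plurisubharmonic function on the pseudoconvex open set $\U\times\C$, $\Omega_s$ is pseudoconvex; and since $r\mapsto M_r(w)$ is continuous non-decreasing, $\Omega_s$ is a \emph{complete} Hartogs domain over $\U_s:=\{w:\varphi(w,0)<s\}$, with fibre over $w\in\U_s$ the disk $\{|\zeta|<\rho_s(w)\}$, where $\rho_s(w):=\sup\{r\ge 0:M_r(w)<s\}\in(0,+\infty]$. By the classical characterisation of pseudoconvex complete Hartogs domains (see \cite{LelongGruman}) this forces $\phi_s:=-\log\rho_s\in\PSH(\U_s)$; there is no regularisation issue here, since upper semicontinuity of $M_r$ in $w$ makes $\rho_s$ lower semicontinuous, hence $\phi_s$ upper semicontinuous. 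Now fix $\U'\cc\U$; as $M_1$ is bounded above on $\overline{\U'}$ there is $k_0$ with $\overline{\U'}\subset\U_{e^k}$ and $\rho_{e^k}>1$ on $\U'$ for all $k\ge k_0$, so that $\psi_k:=\tfrac{1}{k}\phi_{e^k}=-\tfrac{1}{k}\log\rho_{e^k}$ lies in $\PSH(\U')$ and is negative on $\U'$ (with value $-\infty$ exactly where $\varphi(w,\cdot)$ is bounded by $e^k$). These $\psi_k$ (for $k\ge k_0$, reindexed) are the required sequence.

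It then remains to check that $\limsup_k\psi_k=-1/\rho_\varphi$ pointwise on $\U'$. Fix $w\in\U'$ and put $L(t):=\log M_{e^t}(w)$, a continuous non-decreasing function with values in $[0,\infty)$. If $L$ is bounded, then $\varphi(w,\cdot)$ is bounded, $\rho_\varphi(w)=0$, and $\rho_{e^k}(w)=+\infty$ for $k$ large, so $\psi_k(w)=-\infty=-1/\rho_\varphi(w)$. Otherwise $L(t)\to+\infty$, and for $k$ large $\rho_{e^k}(w)\in(1,\infty)$ with $L(\log\rho_{e^k}(w))=k$ by continuity; writing $t_k$ for the leftmost solution of $L(t)=k$ we get $\psi_k(w)=-t_k/k$, and an elementary argument relating a $\limsup$ to the growth of the left-inverse of $L$ ($\limsup_k k/t_k=\limsup_{t\to+\infty}L(t)/t=\rho_\varphi(w)$) gives $\liminf_k t_k/k=1/\rho_\varphi(w)$, that is, $\limsup_k\psi_k(w)=-1/\rho_\varphi(w)$.

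The one genuinely non-routine ingredient is the second step: recognising the sublevel sets of the radial maximum of $\varphi$ as pseudoconvex complete Hartogs domains and invoking the Hartogs--Oka theorem to manufacture the plurisubharmonic radius functions $\phi_s$ out of which the $\psi_k$ are built. The plurisubharmonicity of the radial maximum and the final limit computation are standard, though one must be a little careful with the degenerate case $\rho_\varphi(w)=0$, with the finitely many small indices $k<k_0$, and with verifying that $\phi_s$ is genuinely plurisubharmonic and not merely so off a pluripolar set (which here is clean, since $\rho_s$ is lower semicontinuous). This is in essence the argument of \cite[p.~25]{LelongGruman}.
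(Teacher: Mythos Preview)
The paper does not supply its own proof of this proposition; it is quoted without argument, with a reference to \cite[p.~25]{LelongGruman}. Your construction is correct and is essentially the one given there: pass to the radial maximum $M_{|\zeta|}(w)$, recognise its sublevel sets as pseudoconvex complete Hartogs domains, extract the plurisubharmonic Hartogs radius functions $\phi_s=-\log\rho_s$ via the Oka characterisation, set $\psi_k=\tfrac{1}{k}\phi_{e^k}$, and recover $-1/\rho_\varphi$ from the elementary inversion $\limsup_k k/t_k=\limsup_{t\to\infty}L(t)/t$ for the non-decreasing continuous function $L(t)=\log M_{e^t}(w)$. The only cosmetic point is the degenerate case where $\varphi$ is bounded on all of $\U'\times\C$: then your $\psi_k$ are identically $-\infty$ and hence, under the paper's convention, not in $\PSH(\U')$; this is irrelevant for the intended application and is trivially repaired by replacing $\psi_k$ with $\max\{\psi_k,-k\}$.
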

To prove our main result we also require the following fact 
(see \cite[p.~25]{LelongGruman}) 
which serves as a substitute for the maximum principle.
\begin{propo}
\label{cle2}
 Let $(\varphi_k)_{k\in \mathbb{N}}$ 
be a sequence in 
$\PSH(\Oo)$, uniformly bounded from above on compact subsets. 
Assume that
 $\limsup_{k\rightarrow +\infty} 
\varphi_k \leq 0$ and that there exists $\xi \in \Oo$ such that 
$\limsup_{k\rightarrow +\infty}\varphi_k(\xi)=0$. Then 
 $$\limsup_{k\rightarrow +\infty} \varphi_k=0\,,$$ 
except on a Borel pluripolar subset of $\Oo$.
\end{propo}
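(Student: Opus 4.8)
The plan is to replace the sequence $(\varphi_k)$ by an honest plurisubharmonic function that agrees with $\limsup_{k\rightarrow+\infty}\varphi_k$ off a pluripolar set and then to apply the maximum principle to it. For $m\in\N$ I would set $u_m:=\sup_{k\geq m}\varphi_k$; this is locally bounded above by hypothesis, decreasing in $m$, and satisfies $\inf_{m}u_m=\limsup_{k\rightarrow+\infty}\varphi_k$. Since $u_m$ need not be upper semi-continuous, I would pass to its upper semi-continuous regularisation $u_m^{*}$. By the classical theorem on negligible sets (Brelot--Cartan in the subharmonic case, Bedford--Taylor in the plurisubharmonic case; see \cite[Appendix~1]{LelongGruman}), each $u_m^{*}$ lies in $\PSH(\Oo)$ and the exceptional set $P_m:=\all{w\in\Oo}{u_m(w)<u_m^{*}(w)}$ is Borel and pluripolar. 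Since a countable union of pluripolar sets is pluripolar, $P:=\bigcup_{m\in\N}P_m$ is a Borel pluripolar subset of $\Oo$; moreover $u_m=u_m^{*}$ on $\Oo\setminus P$ for every $m$, so that, writing $v:=\inf_{m}u_m^{*}$, one has $\limsup_{k\rightarrow+\infty}\varphi_k=v$ on $\Oo\setminus P$.

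Next I would check that $v\in\PSH(\Oo)$. Being a decreasing limit of the plurisubharmonic functions $u_m^{*}$, $v$ is either plurisubharmonic or identically $-\infty$, and the hypothesis at $\xi$ excludes the latter: $u_m^{*}(\xi)\geq u_m(\xi)=\sup_{k\geq m}\varphi_k(\xi)\geq\limsup_{k\rightarrow+\infty}\varphi_k(\xi)=0$ for every $m$, hence $v(\xi)\geq 0$. On $\Oo\setminus P$ we have $v=\limsup_{k\rightarrow+\infty}\varphi_k\leq 0$, and since $P$ has zero $2n$-dimensional Lebesgue measure it follows that $v\leq 0$ almost everywhere; the sub-mean value inequality over small balls then forces $v\leq 0$ on all of $\Oo$. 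Together with $v(\xi)\geq 0$ this gives $v(\xi)=0=\sup_{\Oo}v$, so by the Maximum Principle stated above (here the connectedness of $\Oo$ enters) $v\equiv 0$, and therefore $\limsup_{k\rightarrow+\infty}\varphi_k=v=0$ on $\Oo\setminus P$, which is the assertion.

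The step I expect to be the real obstacle is the negligible-sets theorem: the fact that the set where a supremum of plurisubharmonic functions fails to agree with its upper semi-continuous regularisation is not merely Lebesgue-null but pluripolar (and may be chosen Borel). The remaining ingredients --- stability of $\PSH(\Oo)$ under decreasing limits, the sub-mean value inequality, and the maximum principle, all already recorded above --- are soft, and the argument is essentially the standard one underlying \cite[p.~25]{LelongGruman}, reproduced here for completeness.
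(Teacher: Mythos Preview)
The paper does not supply its own proof of this proposition; it is merely quoted from \cite[p.~25]{LelongGruman} as one of the potential-theoretic tools recalled ``mostly without proofs'' at the start of Section~\ref{sec:potentialth}. Your argument is correct and is precisely the standard proof of this fact: pass from $\limsup_k\varphi_k$ to the decreasing sequence of regularised suprema $u_m^{*}\in\PSH(\Oo)$, invoke the negligible-sets theorem to control the countable union of exceptional sets $\{u_m<u_m^{*}\}$, and then apply the maximum principle to the limit $v$. You have also correctly singled out the one substantial input, namely that the set where a locally bounded supremum of plurisubharmonic functions differs from its upper semi-continuous regularisation is pluripolar (and Borel, which here follows since the supremum is over a countable family of u.s.c.\ functions). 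The remaining steps---closure of $\PSH(\Oo)$ under decreasing limits, the passage from $v\leq 0$ a.e.\ to $v\leq 0$ everywhere via the sub-mean-value inequality, and the maximum principle---are routine and are handled correctly.
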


\end{section}

\begin{section}{Complexified homotopies}
\label{sec:holodeform}
In this short section we prove a key lemma which will allow us to
holomorphically deform an arbitrary analytic expanding map
into a suitable finite Blaschke (or anti-Blaschke) product. We start
by the so-called lifting lemma in the analytic category, which  
is a classical result of algebraic topology. 
However since we will need to specify the domains of holomorphy, 
we include a proof for completeness.

\begin{lem}
Let $f:\R \rightarrow \T$ be a real-analytic map. 
Then there exists $\widetilde{f}:\R\rightarrow \R$ real analytic such that for all $x\in \R$
$$f(x)=e^{i \widetilde{f}(x)}.$$
\end{lem}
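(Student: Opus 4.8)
The plan is to reduce the construction of $\widetilde f$ to solving the linear first-order differential equation $f'=ihf$ and then to recover $\widetilde f$ by a single integration. The key observation that makes this possible is that the logarithmic derivative $f'/f$ is automatically real-analytic and purely imaginary.

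First I would note that, since $f(\R)\subset\T$, we have $\abs{f(x)}=1$ for every $x$, so $f$ is nowhere vanishing on $\R$ and $g:=f'/f$ is again real-analytic and $\C$-valued. Differentiating the identity $f(x)\overline{f(x)}=1$ yields $f'(x)\overline{f(x)}+f(x)\overline{f'(x)}=0$, and since $\overline{f(x)}=1/f(x)$ this says precisely that $g(x)+\overline{g(x)}=0$; hence $g$ is purely imaginary-valued, and I may write $g=ih$ with $h:\R\to\R$ real-analytic. Next I would fix any $c\in\R$ with $e^{ic}=f(0)$ (possible because $\abs{f(0)}=1$) and set
\[
  \widetilde f(x)=c+\int_0^x h(t)\,dt\qquad(x\in\R),
\]
which is real-analytic since $h$ is. To verify that $f=e^{i\widetilde f}$, I would introduce $\psi(x):=f(x)e^{-i\widetilde f(x)}$ and compute
\[
  \psi'(x)=\bigl(f'(x)-i\widetilde f'(x)f(x)\bigr)e^{-i\widetilde f(x)}
          =\bigl(f'(x)-ih(x)f(x)\bigr)e^{-i\widetilde f(x)}=0,
\]
the last equality holding because $f'=gf=ihf$. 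Thus $\psi$ is constant, and evaluating at $0$ gives $\psi(x)=\psi(0)=f(0)e^{-ic}=1$, i.e.\ $f(x)=e^{i\widetilde f(x)}$ for all $x$.

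Finally, since the motivation for including a proof is to keep track of domains of holomorphy, I would record the following refinement: if $f$ extends holomorphically to a connected open neighbourhood $U$ of $\R$ in $\C$, then after shrinking $U$ to a (possibly thinner) neighbourhood of $\R$ on which the extension of $f$ is still nowhere zero, the function $g=f'/f$, and hence $h$, extends holomorphically to $U$; as $U$ can be taken simply connected (e.g.\ a strip or horn-shaped neighbourhood), the antiderivative defining $\widetilde f$ is a well-defined holomorphic function on $U$, and the identity $f=e^{i\widetilde f}$ propagates to all of $U$ by analytic continuation.

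There is no genuine obstacle in this argument; it is essentially routine. The only point requiring a little care — and the reason the proof is spelled out rather than merely cited — is that the holomorphic extension of $f$ may acquire zeros off the real axis even though $\abs f\equiv 1$ on $\R$, so one must pass to a thinner complex neighbourhood of $\R$ before forming the logarithmic derivative $f'/f$.
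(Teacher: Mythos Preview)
Your proof is correct and follows essentially the same route as the paper: both define $\widetilde f$ as an antiderivative of the (purely imaginary) logarithmic derivative $f'/f$ plus an initial phase, verify real-analyticity via $\Re(f'/f)=0$, and conclude $f=e^{i\widetilde f}$ by a first-order ODE/uniqueness argument (the paper computes $(e^{i\widetilde f})'=(f'/f)e^{i\widetilde f}$, you equivalently show $(fe^{-i\widetilde f})'=0$). Your added paragraph on holomorphic extension is a welcome elaboration of precisely the point the paper flags as its reason for including the proof.
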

\begin{proof}
Let $\alpha \in \R$ be such that $e^{i\alpha}=f(0)$.  For all $x \in \R$, set 
$$\widetilde{f}(x):=\frac{1}{i} \int_0^x \frac{f'(t)}{f(t)}\,dt+\alpha.$$
Clearly $\widetilde{f}$ is real analytic and real valued since we have
$$
\Re\left ( \frac{f'(x)}{f(x)} \right)=
\frac{d}{dx}\left( \log \vert f(x) \vert \right)=0\,.
$$
Set $g(x)=e^{i\widetilde{f}(x)}$, then $g$ is a solution of the first order linear ODE
$$g'=\frac{f'}{f}g,$$
and thus is proportional to $f$. Since $g(0)=e^{i\alpha}=f(0)$ we are
done.
\end{proof}

Let $P:\R \rightarrow \T$ denote the universal covering map given by $P(\theta)=e^{i\theta}$ and let $\tau:\T\rightarrow \T$ be an analytic expanding map.
By the above lemma, we can always write
$$\tau\circ P(\theta)=e^{i\widetilde{\tau}(\theta)}$$
for some real analytic map $\widetilde{\tau}:\R\rightarrow \R$. 
Then the quantity (independent of $\theta$)
$$ \mathrm{deg}(\tau):=\frac{\widetilde{\tau}(\theta+2\pi)-\widetilde{\tau}(\theta)}{2\pi}\in \Z,$$
is called the degree (or winding number) of the map and does not depend on the choice of $\widetilde{\tau}$. The goal of this section is to prove the following.
\begin{lem}
\label{lem:analyticdef}
 Let $\tau_0, \tau_1$ be two analytic expanding maps of the circle
 such that 
$\mathrm{deg}(\tau_0)=\mathrm{deg}(\tau_1)$. Then there exist two annuli   
$A_{r_0,R_0}$ and $A_{r_1,R_1}$ with  
$\cl{A_{r_0,R_0}}\subset A_{r_1,R_1}$, 
a complex simply connected neighbourhood 
$\mathcal{U}\supset [0,1]$ and a holomorphic map 
$T:\mathcal{U}\times \cl{A_{r_0,R_0}}\rightarrow \C$
 such that:
 \begin{enumerate}
  \item for all $w\in \mathcal{U}$, $T(w,.)$ satisfies 
  $T(w,A_{r_0,R_0})\supset \cl{A_{r_1,R_1}}$; 
 \item for all $w \in [0,1]$, $T(w,.)$ is an analytic expanding map of $\T$; 
 \item  $T(0,.)=\tau_0$ and $T(1,.)=\tau_1$.
 \end{enumerate}
\end{lem}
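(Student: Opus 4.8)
The plan is to homotope $\tau_0$ to $\tau_1$ at the level of their lifts, where the expanding condition is convex, and then to realise that homotopy as a holomorphic family on an annulus by means of holomorphic logarithms. First I would use the lifting lemma to write $\tau_j\circ P=e^{i\widetilde\tau_j}$ with $\widetilde\tau_j\colon\R\to\R$ real-analytic and $\widetilde\tau_j(\theta+2\pi)=\widetilde\tau_j(\theta)+2\pi d$, where $d:=\deg(\tau_0)=\deg(\tau_1)$. Since $\abs{\tau_j'(e^{i\theta})}=\abs{\widetilde\tau_j'(\theta)}>1$ and $\widetilde\tau_j'$ is continuous and never in $[-1,1]$, it has constant sign, which averaging over a period identifies with $\mathrm{sgn}(d)$; in particular $\abs d\geq 2$. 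Hence for each $w\in[0,1]$ the convex combination $\Theta_w:=(1-w)\widetilde\tau_0+w\widetilde\tau_1$ satisfies $\mathrm{sgn}(d)\,\Theta_w'>1$, so $\theta\mapsto e^{i\Theta_w(\theta)}$ is an analytic expanding circle map. To make the $z$-dependence holomorphic I would fix an annulus $A_{\rho,\Lambda}$, $0<\rho<1<\Lambda$, on which both $\tau_j$ are holomorphic and non-vanishing (possible since $\abs{\tau_j}=1$ on $\T$); because the winding number of $\tau_j(z)/z^d$ about $\T$ is $\deg(\tau_j)-d=0$, this function admits a holomorphic logarithm $\ell_j$ on $A_{\rho,\Lambda}$, which after adjusting by a multiple of $2\pi i$ satisfies $\ell_j(e^{i\theta})=i(\widetilde\tau_j(\theta)-d\theta)$. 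Then I would set
\[
 T(w,z):=z^d\exp\!\big((1-w)\ell_0(z)+w\ell_1(z)\big),\qquad (w,z)\in\C\times A_{\rho,\Lambda}\,,
\]
which is holomorphic, satisfies $T(0,\cdot)=\tau_0$, $T(1,\cdot)=\tau_1$, and restricts on $\T$ to $T(w,e^{i\theta})=e^{i\Theta_w(\theta)}$; this already gives properties (2) and (3), and it remains to choose the annuli and $\mathcal U$ so that (1) holds.

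For that I would exploit the identity $\partial_{\log r}\log\abs{T(w,re^{i\theta})}\big|_{r=1}=\Re\big(z\,\partial_zT(w,z)/T(w,z)\big)\big|_{\abs z=1}=\Theta_w'(\theta)$, whose modulus exceeds $1$ with sign $\mathrm{sgn}(d)$, uniformly in $w\in[0,1]$ and $\theta$. By joint continuity and compactness there are $\delta>0$ and $\kappa>1$ with $\mathrm{sgn}(d)\,\partial_{\log r}\log\abs{T(w,re^{i\theta})}\geq\kappa$ for all $w\in[0,1]$, $\theta\in\R$, $\abs{\log r}\leq\delta$, and I may take $\delta$ small enough that $[e^{-\delta},e^{\delta}]\subset(\rho,\Lambda)$. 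Integrating from $r=1$, and taking $d>0$ for definiteness (the case $d<0$ being identical with the two boundary circles interchanged), with $r_0:=e^{-\delta/2}$ and $R_0:=e^{\delta/2}$ I obtain, for all $w\in[0,1]$, that $\abs{T(w,z)}\leq r_0^{\kappa}<r_0$ on $\T_{r_0}$ and $\abs{T(w,z)}\geq R_0^{\kappa}>R_0$ on $\T_{R_0}$. I would then pick $r_1\in\big(\sup_{w\in[0,1],z\in\T_{r_0}}\abs{T(w,z)},\,r_0\big)$ and $R_1\in\big(R_0,\,\inf_{w\in[0,1],z\in\T_{R_0}}\abs{T(w,z)}\big)$, so that $\cl{A_{r_0,R_0}}\subset A_{r_1,R_1}$. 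Since the inequalities $\sup_{z\in\T_{r_0}}\abs{T(w,z)}<r_1$ and $\inf_{z\in\T_{R_0}}\abs{T(w,z)}>R_1$ are open conditions in $w$, by compactness of $[0,1]$ they persist on an open neighbourhood of $[0,1]$, inside which I would take $\mathcal U$ to be a convex (hence simply connected) tube around $[0,1]$.

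To verify (1) I would fix $w\in\mathcal U$ and $\zeta\in\cl{A_{r_1,R_1}}$: then $T(w,\T_{r_0})$ lies in some $D_{r_1'}$ with $r_1'<\abs\zeta$ and $T(w,\T_{R_0})$ lies in some $D_{R_1'}^\infty$ with $R_1'>\abs\zeta$, so $z\mapsto T(w,z)-\zeta$ has no zero on $\partial A_{r_0,R_0}$, and the argument principle counts its zeros in $A_{r_0,R_0}$ as $n\big(T(w,\T_{R_0}),\zeta\big)-n\big(T(w,\T_{r_0}),\zeta\big)$, where $n$ denotes the winding number. The second term is $0$ (the curve lies in a disk not containing $\zeta$), and the first equals $n\big(T(w,\T_{R_0}),0\big)=\tfrac{1}{2\pi i}\int_{\T_{R_0}}\big(d/z+(1-w)\ell_0'(z)+w\ell_1'(z)\big)\,dz=d$ since $\ell_0,\ell_1$ are single-valued; hence $\zeta$ has $d\neq0$ preimages in $A_{r_0,R_0}$, which gives $T(w,A_{r_0,R_0})\supset\cl{A_{r_1,R_1}}$.

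The hard part of the whole argument is exactly this last, uniform, step: securing one pair of annuli and one complex neighbourhood $\mathcal U$ of $[0,1]$ along which $T(w,\cdot)$ stays holomorphically expansive in the strong nested sense demanded by (1). It is the quantitative radial estimate $\partial_{\log r}\log\abs{T(w,re^{i\theta})}|_{r=1}=\Theta_w'(\theta)$, together with compactness of $[0,1]$, that makes this possible; by contrast the algebraic construction of $T$ via holomorphic logarithms and the verification that $T(w,\cdot)$ is expanding on $\T$ are routine.
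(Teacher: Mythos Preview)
Your proof is correct and follows essentially the same strategy as the paper: interpolate linearly between the lifts $\widetilde\tau_0,\widetilde\tau_1$, use that the expanding condition is convex at the lift level, and then obtain the nested-annulus property by integrating the radial logarithmic derivative of $|T|$. The resulting family $T$ is in fact identical to the paper's; the only differences are in packaging---you build $T$ directly on the annulus via holomorphic logarithms of $\tau_j/z^d$ (using that $\deg(\tau_j)=d$ makes the winding number vanish), whereas the paper works on the universal cover and pushes $\widetilde T$ down through $P$; and you first establish the boundary inequalities for $w\in[0,1]$ and extend by openness, while the paper works with complex $w$ from the start, allowing $|\log|T(w,e^{ib})||\le\bar\epsilon$---and you verify the surjectivity in (1) explicitly via the argument principle, which the paper leaves to an earlier remark based on the open mapping theorem.
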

\begin{proof}
Consider
$$\widetilde{T}(w,\theta):=e^{i\left \{ (1-w)\widetilde{\tau_0}(\theta)+w \widetilde{\tau_1}(\theta)\right \}},$$
and choose $\epsilon>0$ so small that $\widetilde{T}$ is holomorphic on $\C \times ( \R+i[-\epsilon,+\epsilon])$.
Let $P(\theta)=e^{i\theta}$ and let $A_{r_0,R_0}$ be given by 
$$A_{r_0,R_0}:=P(\R+i[-\epsilon,+\epsilon]).$$
We claim, that there exists a unique map $T:\C\times A_{r_0,R_0}\rightarrow \C$ such that for all $(w,\theta)\in \C \times (\R+i[-\epsilon,+\epsilon])$, we have
$$T(w,P(\theta))=\widetilde{T}(w,\theta).$$
Indeed, since $\mathrm{deg}(\tau_0)=\mathrm{deg}(\tau_1)$, we have (by unique continuation) for all $\theta \in \R+i[-\epsilon,+\epsilon]$,
$$\widetilde{\tau_0}(\theta+2\pi)-\widetilde{\tau_0}(\theta)=\widetilde{\tau_1}(\theta+2\pi)-\widetilde{\tau_1}(\theta).$$ 
Therefore for all $(w,\theta)\in \C \times ( \R+i[-\epsilon,+\epsilon])$, 
$$\widetilde{T}(w,\theta+2\pi)=\widetilde{T}(w,\theta),$$
which guarantees that $T$ is well defined. Since 
$$P:\R+i[-\epsilon,+\epsilon] \rightarrow A_{r_0,R_0}$$
 is a locally biholomorphic map (a holomorphic covering) it follows that
$T$ is a holomorphic map. Remark that by taking $\epsilon$ small enough, we can assume that $T(w,.)$ is holomorphic on a neighbourhood of 
$\cl{A_{r_0,R_0}}$.
We will now restrict the first variable $w$ to a domain of the type 
$$\mathcal{U}:=[0,1]+\Delta(0,\eta),$$ 
where $\eta$ will be taken small, and $\Delta(0,\eta)$ denotes the complex disc centred at $0$ of radius $\eta$. 
To simplify notation further, we set 
$$\widetilde{\tau_w}(\theta):=(1-w)\widetilde{\tau_0}(\theta)+w \widetilde{\tau_1}(\theta).$$
Note that since $\tau_0$ and $\tau_1$ are both
expanding and have same degree, we have 
$$\inf_{(w,\theta)\in [0,1]\times \R} \vert \partial_\theta \widetilde{\tau_w}(\theta) \vert >1.$$
Because $\partial_\theta{\widetilde{\tau_w}(\theta)}$ is real for $w\in [0,1]$ and $\theta$ real, a simple compactness argument shows that it is possible
to choose $\eta>0$ and $\epsilon>0$ so small that 
$$ \rho:=\inf_{(w,\theta)\in \mathcal{U}\times \R+i[-\epsilon,+\epsilon]} \vert \Re\left (\partial_\theta \widetilde{\tau_w}(\theta) \right )\vert >1.$$
Assuming that $\Re(\partial_\theta \widetilde{\tau_w} )>0$,
we now observe that 
$$\log\vert T(w,e^{\epsilon+ib})\vert -\log \vert T(w,e^{ib}) \vert =\int_0^\epsilon \partial_a \left \{ \log \vert T(w, e^{a+ib} )\vert \right \}\,da$$
$$= \int_0^\epsilon \Re( \partial_\theta \widetilde{\tau_w}(b-ia))\,da\geq \rho \epsilon,$$
while 
$$\log\vert T(w,e^{ib})\vert -\log \vert T(w,e^{-\epsilon+ib}) \vert =\int_{-\epsilon}^0 \partial_a \left \{ \log \vert T(w, e^{a+ib} )\vert \right \}\,da\geq \rho \epsilon.$$
Choose $\eta>0$ so small that for all $w\in \mathcal{U}$ and $b \in \R$ we have
$$\log \vert  T(w,e^{ib}) \vert\in [-\overline{\epsilon},+\overline{\epsilon}],$$
where $\overline{\epsilon}=\epsilon \frac{\rho-1}{2}$.
We end up with
$$\vert T(w,e^{\epsilon+ib})\vert \geq e^{\rho \epsilon-\overline{\epsilon}}=e^{\epsilon (\rho+1)/2}:=R>R_0=e^\epsilon,$$
$$\vert T(w,e^{-\epsilon+ib})\vert \leq e^{-\rho \epsilon+\overline{\epsilon}}=e^{-\epsilon (\rho+1)/2}:=r<r_0=e^{-\epsilon}.$$
Therefore, uniformly in $w\in \mathcal{U}$, we have 
\[ T(w,\mathbb{T}_{r_0}) \subset D_{r_1} \text{ and  }
   T(w,\mathbb{T}_{R_0}) \subset D_{R_1}^\infty \] 
% $T(w,.)$ maps $\partial A_{r_0,R_0}$  outside $\cl{A_{r_1,R_1}}$ 
for all 
$$R_0<R_1<R \ \mathrm{and\  all}\  r<r_1<r_0.$$
The proof is similar if
$\Re(\partial_\theta \widetilde{\tau_w} )<0$.
\end{proof}
Note that when $w\not \in [0,1]$, the map
$z\mapsto T(w,z)$ is {\it a priori} no longer preserving the unit
circle. However, it is still a holomorphically expansive map
of some annulus $A_{r_0,R_0} $ in the sense defined previously. 
An explicit homotopy between $z\mapsto z^2$ and a Blaschke
product 
with non trivial spectrum
is provided by 
\begin{equation}
\label{exphom}
T(w,z)=z\left ( \frac{2z-w}{2-wz} \right)\,.
\end{equation}
For a plot of the filled Julia set with 
$w=0.5+0.26i$, for which the invariant set is a quasi-circle, see
Figure~\ref{figJ}. 

\begin{figure}[h]
\centering \includegraphics[scale=0.4]{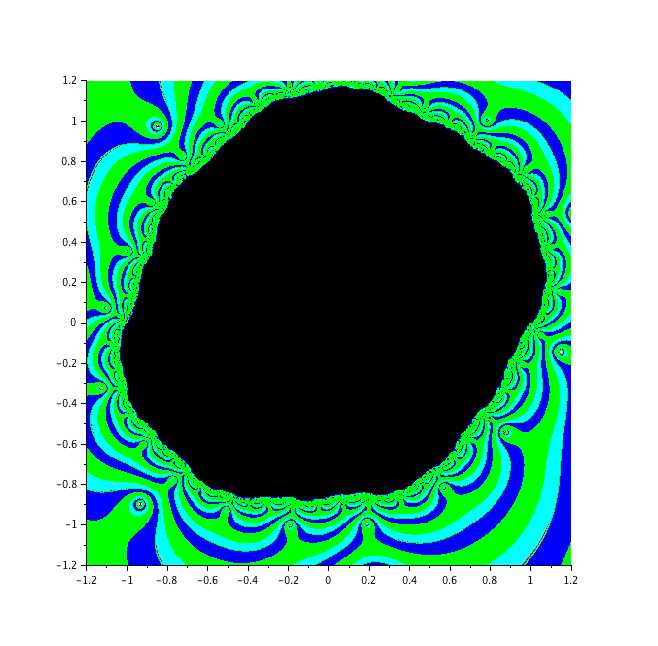} 
\caption{Filled Julia set of the map (\ref{exphom}) with 
$w=0.5+0.26i$.}
\label{figJ}
\end{figure}

\end{section}
\begin{section}{Proof of the main result}
We start with 
$\tau:\T \rightarrow \T$, analytic expanding, 
with degree $d \in \Z$, $\vert d\vert \geq 2$.
We choose a Blaschke 
product $B:\hat{\mathbb{C}}\rightarrow \hat{\mathbb{C}}$ 
with the same degree $d$ and a non-trivial Ruelle eigenvalue
sequence with exponential decay, 
as in Remark~\ref{rem:blexpodecay} and 
Remark ~\ref{rem:ablexpodecay}. Using Lemma
\ref{lem:analyticdef}, we have a holomorphic map 
$$T:\U\times A_{r,R}\rightarrow \C$$ 
such that $T(0,z)=\tau(z)$ and $T(1,z)=B(z)$
with the property that for each $w\in \U$, the map 
$z\mapsto T(w,z)$ is
holomorphically expansive on the annulus $A_{r,R}$. 
By Corollary \ref{co:Cexpoclass}, we know that the dual operator 
$$\lt_{T(w,.)}^\dagger:\monster \rightarrow \monster\,,$$
is a compact trace class operator and we consider the determinant
$$\mathcal{Z}(w,\zeta):=\det(I-e^\zeta \lt_{T(w,.)}^\dagger )\,,$$ 
which defines a holomorphic function on $\U \times \C$. 
Our goal is to investigate the corresponding 
order function (defined for $w\in \U$)
$$\rho(w):=\limsup_{r\rightarrow +\infty} 
\frac{\log (\sup_{\vert \zeta \vert \leq r}
\max\{\log \vert \mathcal{Z}(w,\zeta)\vert,0\})}{\log r}\,.$$
We start with the following simple and useful lemma.
\begin{lem}
 \label{orderest} 
 Let $\mathcal{H}$ be a separable complex Hilbert space and let 
$L:\mathcal{H}\rightarrow \mathcal{H}$ be a trace class operator
 such that the eigenvalue sequence $(\lambda_n(L))_{n\in \mathbb{N}}$ 
satisfies
 $$\vert \lambda_n(L)\vert \leq Ce^{-\alpha n^\beta}\,,$$
 for some $C,\alpha>0$ and $\beta\geq 1$. 
Then, as $\vert \zeta \vert \rightarrow +\infty$, we have
 $$\log\vert \det(I-e^\zeta L)\vert = 
O\left (\vert \zeta  \vert^{\frac{1}{\beta}+1}\right)\,.$$ 
\end{lem}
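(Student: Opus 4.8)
The plan is to estimate $\log|\det(I-e^\zeta L)|$ directly from the product expansion over eigenvalues, splitting the index $n$ into a ``small'' regime where $|e^\zeta\lambda_n(L)|$ may be large and a ``large'' regime where it is small. Recall that since $L$ is trace class we have the bound $|\det(I-e^\zeta L)|\leq \prod_{n=1}^\infty(1+|e^\zeta|\,|\lambda_n(L)|)$, hence
\[
\log|\det(I-e^\zeta L)|\leq \sum_{n=1}^\infty \log\bigl(1+e^{\Re\zeta}|\lambda_n(L)|\bigr)\leq \sum_{n=1}^\infty \log\bigl(1+e^{|\zeta|}Ce^{-\alpha n^\beta}\bigr)\,.
\]
So it suffices to estimate $S(t):=\sum_{n=1}^\infty \log(1+C e^{t-\alpha n^\beta})$ as $t=|\zeta|\to+\infty$ and show $S(t)=O(t^{1/\beta+1})$.

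First I would locate the crossover index $N=N(t)$ where the exponent changes sign, i.e.\ $\alpha N^\beta\approx t+\log C$, so that $N(t)$ is of order $t^{1/\beta}$. For $n\leq N$ I use the crude bound $\log(1+x)\leq \log 2 + \log^+ x$ with $\log^+ x = \max\{\log x,0\}$, giving $\log(1+Ce^{t-\alpha n^\beta})\leq \log(2C)+t-\alpha n^\beta \leq \log(2C)+t$; summing over the $O(t^{1/\beta})$ such indices contributes $O(t\cdot t^{1/\beta})=O(t^{1/\beta+1})$. For $n>N$ I use $\log(1+x)\leq x$, so the tail is bounded by $C e^{t}\sum_{n>N}e^{-\alpha n^\beta}$; since $n^\beta\geq N^\beta + \beta N^{\beta-1}(n-N)$ for $n\geq N$ (convexity of $x\mapsto x^\beta$ when $\beta\geq 1$), this sum is dominated by a geometric series with ratio $e^{-\alpha\beta N^{\beta-1}}<1$, yielding a bound of the form $Ce^{t}e^{-\alpha N^\beta}\cdot O(1)=O(1)$ by the choice of $N$. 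Adding the two contributions gives $S(t)=O(t^{1/\beta+1})$, which is the claimed estimate.

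The one point requiring a little care — and the closest thing to an obstacle — is making the tail estimate uniform: one must check that the geometric ratio $e^{-\alpha\beta N(t)^{\beta-1}}$ stays bounded away from $1$ as $t\to\infty$. This is automatic when $\beta>1$ since $N(t)\to\infty$, but when $\beta=1$ one has $N(t)^{\beta-1}=1$ and the ratio is the constant $e^{-\alpha}$, so the geometric series still converges with a uniform bound; either way the tail is $O(1)$ and in fact $o(1)$ when $\beta>1$. A second minor point is that $\Re\zeta\leq|\zeta|$ is all that is used, so the estimate holds for all $\zeta$ with $|\zeta|$ large, not merely real $\zeta$; this is exactly the form needed for the order-of-growth computation of $\mathcal{Z}(w,\cdot)$. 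I would organize the write-up as: (i) the determinant/product inequality; (ii) definition of $N(t)$ and the split; (iii) the two sums; (iv) conclusion.
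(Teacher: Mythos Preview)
Your proof is correct and follows essentially the same route as the paper: both start from the product inequality $\log|\det(I-e^\zeta L)|\leq\sum_n\log(1+Ce^{|\zeta|-\alpha n^\beta})$, split at a cutoff $N\sim(|\zeta|/\alpha)^{1/\beta}$, bound the head by $N\cdot O(|\zeta|)=O(|\zeta|^{1/\beta+1})$, and use $\log(1+x)\leq x$ on the tail. The only cosmetic difference is that the paper estimates the tail sum $\sum_{n>N}e^{-\alpha n^\beta}$ by integral comparison rather than your convexity/geometric-series argument, and chooses $N=2\lfloor(|\zeta|/\alpha)^{1/\beta}\rfloor$ so that the extra factor of $2^\beta$ in the exponent kills the tail without further discussion.
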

\begin{proof}
 Write
 $$\log \vert \det(I-e^\zeta L)\vert 
\leq\sum_{n\geq 1} \log(1+Ce^{\vert \zeta\vert} e^{-\alpha n^\beta})$$
 $$\leq N \log(1+Ce^{\vert \zeta\vert})+Ce^{\vert \zeta \vert} 
\sum_{n\geq N+1}  e^{-\alpha n^\beta}\,.$$
 Since 
 $$ \sum_{n\geq N+1}  e^{-\alpha n^\beta}\leq \int_N^{+\infty} e^{-\alpha t^\beta}dt=O(e^{-\alpha N^\beta}),$$
 setting $N=2\lfloor (\vert \zeta  \vert/\alpha)^{1/\beta}\rfloor$ 
now finishes the proof.
 \end{proof}
 The first key observation is the following.
\begin{propo}
 \label{order} Using the above notations, for all $w\in \U$ we have 
$\rho(w)\leq 2$ and $\rho(1)=2$.
\end{propo}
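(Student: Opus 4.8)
The plan is to bound $\rho(w)$ from above using Lemma~\ref{orderest} and to compute $\rho(1)$ exactly from the explicit eigenvalue formula for the Blaschke product $B$. First I would establish the upper bound: by Corollary~\ref{co:Cexpoclass}, for each $w\in\U$ the operator $\lt_{T(w,.)}^\dagger$ is of exponential class, so its eigenvalues satisfy $\abs{\lambda_n(\lt_{T(w,.)}^\dagger)}\leq c_1\exp(-c_2n)$ for some $c_1,c_2>0$ (possibly depending on $w$). Applying Lemma~\ref{orderest} with $\beta=1$ gives $\log\abs{\mathcal Z(w,\zeta)}=O(\abs{\zeta}^2)$ as $\abs{\zeta}\to\infty$, which immediately yields $\rho(w)\leq 2$ for every $w\in\U$. (Here one should note that the constants in the exponential-class estimate depend on $w$, but this is harmless since $\rho(w)$ is computed for each fixed $w$.)

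Next I would compute $\rho(1)$. Since $T(1,.)=B$ is the chosen Blaschke (or anti-Blaschke) product with non-trivial spectrum, Proposition~\ref{product1} (or Proposition~\ref{product2}) gives the explicit factorisation of $\det(1-z\lt_B^\dagger)$ as an infinite product over $\mu^k$ and $\overline{\mu}^k$ (resp. $\pm\mu^k$), where $0<\abs{\mu}<1$. Writing $z=e^\zeta$ and taking logarithms, one has essentially
\[
\log\abs{\mathcal Z(1,\zeta)} = \sum_{k\geq 1}\bigl(\log\abs{1-\mu^k e^\zeta}+\log\abs{1-\overline{\mu}^k e^\zeta}\bigr)+\log\abs{1-e^\zeta}\,.
\]
For $\zeta$ real and large, each term $\log\abs{1-\mu^k e^\zeta}$ behaves like $\zeta+k\log\abs{\mu}$ once $k\log\abs{\mu}^{-1}<\zeta$, i.e.\ for roughly $\zeta/\log\abs{\mu}^{-1}$ values of $k$, each contributing order $\zeta$; summing produces a term of order $\zeta^2/\log\abs{\mu}^{-1}$. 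Thus $\sup_{\abs{\zeta}\leq r}\max\{\log\abs{\mathcal Z(1,\zeta)},0\}$ grows like a constant times $r^2$, giving $\rho(1)\geq 2$, and combined with the already-established upper bound, $\rho(1)=2$.

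The main obstacle will be the lower-bound computation for $\rho(1)$: one must show the sum of $\log\abs{1-\mu^k e^\zeta}$ terms genuinely attains quadratic growth rather than being spoiled by cancellation, and one must be slightly careful because $\mu$ may be complex (in the Blaschke case) so that $\log\abs{1-\mu^k e^\zeta}$ oscillates. The clean way around this is to evaluate along $\zeta\to+\infty$ real and use the elementary estimate $\log\abs{1-\mu^k e^\zeta}\geq \Re\zeta + k\log\abs{\mu} - \log 2$ whenever $\abs{\mu^k e^\zeta}\geq 2$ (since then $\abs{1-\mu^k e^\zeta}\geq \abs{\mu^k e^\zeta}/2$), and discard the remaining finitely many terms, each bounded below by $\log(1-\abs{\mu})$ or similar. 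Keeping only the positive contributions of the roughly $c\abs{\zeta}$ indices $k$ with $\abs{\mu^k e^\zeta}\geq 2$ then gives the $\Omega(\abs{\zeta}^2)$ lower bound for $\log\abs{\mathcal Z(1,\zeta)}$ on a sequence $\zeta\to\infty$, which is all that is needed for $\rho(1)\geq 2$. The anti-Blaschke case (Proposition~\ref{product2}) is identical with $\mu\in[0,1)$ real, and is in fact slightly simpler.
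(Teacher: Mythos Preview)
Your upper bound argument is identical to the paper's. For the lower bound $\rho(1)\geq 2$, however, you take a genuinely different route. The paper argues by contradiction via Jensen's formula: if $\rho(1)<2$, then the zero-counting function $N(R)$ of $\zeta\mapsto\mathcal Z(1,\zeta)$ satisfies $N(R)=O(R^\rho)$ for some $\rho<2$, but the zeros (read off directly from the product formula) contain a rank-two lattice in $\C$ and hence satisfy $N(R)\gtrsim R^2$, a contradiction. Your approach instead estimates $\log|\mathcal Z(1,\zeta)|$ directly from below along real $\zeta\to+\infty$.

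Both arguments rest on the same explicit product formula and both work. The paper's zero-counting argument is cleaner: once you observe that the zeros form a lattice you are done, and you never need to worry about individual factors of the product being small. Your direct estimate is more elementary but requires one piece of care that your write-up glosses over. The terms with $|\mu^k e^\zeta|<2$ are \emph{not} finitely many --- there are infinitely many such $k$ (all sufficiently large $k$). Among these, the tail with $|\mu^k e^\zeta|\leq 1/2$ is harmless (those logs sum to something bounded below by a constant), but the $O(1)$ indices with $1/2<|\mu^k e^\zeta|<2$ are genuinely dangerous: $|1-\mu^k e^\zeta|$ can be arbitrarily small there, and no uniform bound like $\log(1-|\mu|)$ applies. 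To repair this you must actually choose your sequence $\zeta_n\to\infty$ to stay a fixed positive distance from the (discrete) zero set of $\mathcal Z(1,\cdot)$; this is routine, but it should be stated rather than hidden under ``or similar''. With that fix, your argument is complete.
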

\begin{proof}
 By Corollary \ref{co:Cexpoclass}, we know that 
$\lt_{T(w,.)}^\dagger$ is in the exponential class, 
so it definitely follows from Lemma \ref{orderest} that 
$\rho(w)\leq 2$. On the other hand, for $w=1$, we have by Proposition \ref{product1}  and Proposition \ref{product2}
 the explicit formula (we state it for the orientation preserving case)
 $$\mathcal{Z}(1,\zeta)=
(1-e^\zeta)\prod_{k=1}^\infty (1-e^\zeta\mu^k)(1-e^\zeta \overline{\mu}^k)\,,$$
 where $\mu\in \mathbb{D}\setminus \{0\}$. We shall now show that 
$\rho(1)=2$. Assume to the contrary that $\rho(1)<2$ and fix $\rho$
with $\rho(1)<\rho <2$. 
Now consider the counting function
  $$N(r):=\#\{ \vert \zeta +1\vert \leq r\ :\ \mathcal{Z}(1,\zeta )=0\}\,.$$
Applying Jensen's formula 
(see, for example, \cite[Chapter~4]{Ransford}), 
we have (note that $\mathcal{Z}(1,-1)\neq 0$)
$$\int_0^{2R} \frac{N(r)}{r}dr=\frac{1}{2\pi}\int_{0}^{2\pi} \log\vert
\mathcal{Z}(1,-1+2Re^{i\theta})\vert\,d\theta-\log\vert
\mathcal{Z}(1,-1)\vert
=O(R^\rho)$$
as $R\to \infty$. Observing that 
$$\int_0^{2R} \frac{N(r)}{r}\,dr\geq 
\int_R^{2R} \frac{N(r)}{r}\,dr\geq   
N(R) \int_R^{2R} \frac{1}{r}\,dr
=\log(2)N(R)\,,$$
we obtain as $R\rightarrow +\infty$,
$$N(R)= O(R^{\rho })\,.$$ 
On the other hand, it can be seen from the above explicit 
product formula that zeros of $\mathcal{Z}(1,\zeta)$ 
contain a rank $2$ lattice, which contradicts the above growth
estimate because this implies that 
$N(R)\geq CR^2$ for all $R$ large by a simple lattice counting
argument. 
 %(Gauss circle problem).
\end{proof}

We are now ready to use Proposition \ref{cle1} and \ref{cle2} to 
complete the proof. Fix $(\U_n)$ a compact exhaustion of $\U$.
For all $n$ large enough, one can find by Proposition \ref{cle1} a 
sequence $(\psi_k)_{k\in\mathbb{N}}$ of subharmonic functions on $\U_n$ 
such that
$$\limsup_{k\rightarrow \infty} \psi_k(w)
=\frac{1}{2}-\frac{1}{\rho(w)}\leq 0\,.$$
On the other hand, we know that
$$\limsup_{k\rightarrow \infty} \psi_k(w)=\frac{1}{2}-\frac{1}{\rho(1)}=0\,,$$
which implies by Proposition \ref{cle2} that
$\rho(w)=2$ for all $w\in \U_n\setminus E_n$, 
where $E_n$ is a polar set. Since a countable reunion of polar sets is polar,
we deduce that finally
$\rho(w)=2$ for all $w\in \U\setminus E$ where $E$ is a polar set. 
We know in addition that polar sets have Hausdorff dimension $0$ which is
more than enough to conclude that $E\cap [0,1]$ has Lebesgue measure
$0$. 
Notice that whenever $\rho(w)=2$, we know by Lemma \ref{orderest} 
that the eigenvalue sequence of 
$\lt_{T(w,.)}^\dagger$ has to satisfy
$$\limsup_{n\rightarrow \infty} \vert \lambda_n(\lt_{T(w,.)}^\dagger)
\vert \exp(n^{1+\epsilon}) >0\,, $$
for every $\epsilon >0$, 
otherwise it would produce a contradiction. 
The very end of the proof follows from the observation that for 
all compact subset $K\subset \U$ and all 
$$r<r_1<1<R_1<R,$$
 we have for all $\eta $ small,
$$ \sup_{z\in A_{r_1,R_1}}\vert T(0,z)-T(\eta,z)\vert\leq 
\eta\sup_{K\times A_{r_1,R_1}}  \vert \partial_1 T \vert\,.$$

\end{section}

\section{Acknowledgements}
We are very grateful to Julia Slipantschuk for communicating
Lemma~\ref{Julialem} to us. 
We would also like to 
thank Viviane Baladi for bibliographic assistance. 
FN is supported by ANR ``GeRaSic'' and the 
Institut Universitaire de France.

% \bibliography{bibli1}

\end{document}